\documentclass[a4paper,11pt]{article}
\usepackage{amssymb}
\usepackage{amsmath, amsthm, amscd, amsfonts, amssymb, graphicx, color}

\textheight 22.5truecm \textwidth 14.5truecm
\setlength{\oddsidemargin}{0.35in}\setlength{\evensidemargin}{0.35in}

\setlength{\topmargin}{-.5cm}

\newtheorem{theorem}{Theorem}[section]
\newtheorem{lemma}[theorem]{Lemma}
\newtheorem{proposition}[theorem]{Proposition}
\newtheorem{corollary}[theorem]{Corollary}
\theoremstyle{definition}
\newtheorem{definition}[theorem]{Definition}
\newtheorem{example}[theorem]{Example}

\theoremstyle{remark}
\newtheorem{remark}[theorem]{Remark}
\numberwithin{equation}{section}

\title {On operators with closed range and semi-Fredholm operators over $W^{*}$-algebras}
\author{Stefan Ivkovi\'{c}}

\begin{document}
	\begin{center}
		\textbf{On operators with closed range and semi-Fredholm operators over $W^{*}$-algebras}
	\end{center}
\begin{center}
	\textbf{	Stefan Ivkovi\'{c}}
\end{center}
\begin{center}
		The Mathematical Institute of the Serbian Academy of Sciences and Arts,\\
		p.p. 367, Kneza Mihaila 36, 11000 Beograd, Serbia,\\
		E-mail: stefan.iv10@outlook.com, Tel.: +381-69-774237 
\end{center}
\begin{abstract}
In this paper we consider $\mathcal{A}$-Fredholm and semi-$\mathcal{A}$-Fredholm operators on Hilbert $C^{*}$-modules over a $W^{*}$-algebra $\mathcal{A}$ defined in \cite {I},\cite {MF}. Using the assumption that $\mathcal{A}$ is a $W^{*}$-algebra (and not an arbitrary $C^{*}$-algebra)
such as a generalization of Schechter-Lebow characterization of semi-Fredholm operators and a generalization of "punctured neighbourhood" theorem, as well as some other results that generalize their classical counterparts. We consider both adjointable and non adjointable semi-Fredholm operators over $W^{*}$-algebras. Moreover, we also work with general bounded, adjointable operators with closed ranges over $C^{*}$-algebras and prove a generalization to Hilbert $C^{*}$-modules of the result in \cite{Bld} on Hilbert spaces.
\\
\textbf{Keywords} Hilbert C*-modules, W*-algebras, semi- Fredholm operators\\
\textbf{Mathematics Subject Classification (2010)} Primary MSC 47A53; Secondary MSC 46L08\\
\end{abstract}	
\section{Introduction }
	Fredholm theory on Hilbert $\mathrm{C}^*$-modules as a generalization of Fredholm theory on Hilbert spaces was started by Mishchenko and Fomenko in \cite{MF}. They have elaborated the notion of a Fredholm operator on the standard module $H_{\mathcal{A}} $ and proved the generalization of the Atkinson theorem.In \cite{I} we went further in this direction and defined semi-Fredholm operators on Hilbert $\mathrm{C}^{*}$-modules. We proved then several properties of these generalized semi Fredholm operators on Hilbert $\mathrm{C}^{*}$-modules as an analogue or generalization of the well-known properties of classical semi-Fredholm operators on Hilbert and Banach spaces. Several special properties of $\mathcal{A}$-Fredholm operators in the case of $W^{*}$-algebra were described in \cite[Section 3.6]{MT}. The idea in this paper was to go further in this direction and establish more special properties $\mathcal{A}$-Fredholm operators defined in \cite{MF} and of semi$- \mathcal{A} $-Fredholm operator defined in  \cite {I}, in the case when $\mathcal{A}$ is a $W^{*}$-algebra, the properties that are closer related to the properties of the classical semi-Fredholm operators on Hilbert spaces than in the general case, when $\mathcal{A}$ is an arbitrary $C^{*}$-algebra. Moreover, we consider both adjointable and non-adjointable semi-Fredholm operators over $W^{*}$-algebras in this paper.\\
	Here is the list of our main results. Proposition  \ref{P03} and Lemma \ref{L06}  generalize the part of the index theorem which states that if $F,D$ are Fredholm operators on a Hilbert spaces $H$, then $\dim \ker FD \leq \dim \ker F + \dim \ker D$ and $\dim ImFD^{\bot} \leq \dim Im F^{\bot} + \dim Im D^{\bot}.$ \\
	Corollary \ref{C04} and Lemma \ref{L01} ,  and Proposition \ref{P10} is a generalization of \cite[Theorem 1.5.7]{ZZRD}, originally given in \cite{SE}.  Theorem \ref{T01}, and Corollary \ref{C03}, Lemma \ref{L05}  and Proposition \ref{P10} are analogue of Schechter's and Lebow's characterization of semi-Fredhnolm operators \cite[Theorem 1.4.4]{ZZRD}  and  \cite[Theorem 1.4.5]{ZZRD}, originally given in \cite{LS}, \cite{SC}, Theorem \ref{T02} is a generalization of the classical "punctured neighbourhood theorem"  \cite[Theorem 1.7.7]{ZZRD}, originally given in  \cite{LAY}. Compared to the classical version on Hilbert spaces, our generalization (Theorem \ref{T02}) needs additional assumption on the operator $F \in  {\mathcal{M}\Phi}(M),$ denoted by (*). It turns out that in the case of ordinary  Hilbert spaces, (*) is automatically satisfied for any Fredhnolm operator, so in the case of ordinary Hilbert spaces, Theorem 3.14 reduces to the classical "punctured neighbourhood" theorem. However, in Example \ref{E01}, we give an example of a Hilbert $C^{*}$-module over a $W^{*}$-algebra $\mathcal{A}$ which is not a Hilbert space and where the condition (*) is satisfied for all $\mathcal{A}$-Fredholm operators as long as they have closed image.\\
	In several results in this paper we consider semi-$\mathcal{A}$-Fredholm operators with closed image. Ordinary semi-Fredholm operators on Hilbert spaces have always closed image, however in our generalizations to modules we need sometimes to provide this additional assumption in order to obtain an analogue of the classical results. This led us to study in general bounded, adjointable operators over $C^{*}$-algebras with closed image and not only semi-Fredholm operators over $W^{*}$-algebra. We prove in Lemma \ref{L07} that if $F, D$ are two bounded, adjointable operators on the standard module with closed images, then $Im DF$ is closed iff the Dixmier angle between $Im F$ and $\ker D \cap (\ker D \cap Im F)^{\perp}   $ is positive, or equivalently iff the Dixmier angle between $ \ker D  $ and $Im F \cap (\ker D \cap Im F)^{\perp}$ is positive. This is an anlogue on Hilbert $C^{*}$-modules of the well known result in \cite{Bld} on Hilbert spaces. Moreover, our Lemma \ref{L06}, which generalizes the above mentioned second part of the classical index theorem, yields for arbitrary bounded, adjointable operators $F, D$ on the standard $C^{*} $ module provided that $Im F, Im D, Im DF$ are closed. Next, our Lemma  \ref{L11} gives another, simplified proof of the result in \cite{Sh}. This result follows as corollary from Lemma \ref{L11}, Corollary  \ref{C05}.  \\
	Important tools for proving most of the results in this paper are \cite[Corollary 3.6.4]{MT},\cite[Corollary 3.6.7]{MT}, \cite[Proposition 3.6.8]{MT} originally given in \cite{FM},\cite{LAN},\cite{LIN} and these results assume that $\mathcal{A}$ is a $W^{*}$-algebra. That's why we deal mainly here with Hilbert $\mathrm{C}^{*}$-modules over $W^{*}$-algebras. However, our Lemma \ref{L04},  Corollary \ref{C01}, Lemma \ref{L11}, Lemma \ref{L07}, Corollary \ref{C05}, Corollary \ref{C02} and Lemma \ref{L06} hold also in the case when $\mathcal{A} $ is an arbitrary unital $C^{*}$-algebra and not only $W^{*}$-algebra.
\section{Preliminaries }
	Throughout this paper we let $\mathcal{A}$ be a $W^{*}$-algebra, $H_{\mathcal{A}}$ or $l_{2}(\mathcal{A})$ be the standard Hilbert $C^{*}$-module over $\mathcal{A}$ and we let $B^{a}(H_{\mathcal{A}})$ denote the set of all bounded , adjointable operators on $H_{\mathcal{A}}.$ 
	Similarly, if $M$ is an arbitrary Hilbert $C^{*}$-module, we let $B^{a}(M) $ denote the set of all bounded, adjojntable  operators on $M$. We let  $B(l_{2}(\mathcal{A}))$ denote the set of all $\mathcal{A}$-linear, bounded, but not necessarily adjointable operators on $l_{2}(\mathcal{A}).$  According to \cite[ Definition 1.4.1] {MT}, we say that a Hilbert $\mathrm{C}^*$-module $M$ over $\mathcal{A}$ is finitely generated if there exists a finite set $ \lbrace x_{i} \rbrace \subseteq M $  such that $M $ equals the linear span (over $\mathrm{C}$ and $\mathcal{A} $) of this set.\\
	The notation $\tilde{ \oplus} $ denotes the direct sum of modules without orthogonality, as given in \cite{MT}.\\
	\begin{definition} \label{D01} 
		\cite[Definition 2.1]{I}
		Let $\mathrm{F} \in B^{a}(H_{\mathcal{A}}).$ We say that $\mathrm{F} $ is an upper semi-{$\mathcal{A}$}-Fredholm operator if there exists a decomposition $$H_{\mathcal{A}} = M_{1} \tilde \oplus {N_{1}}‎‎\stackrel{\mathrm{F}}{\longrightarrow} M_{2} \tilde \oplus N_{2}= H_{\mathcal{A}} $$ with respect to which $\mathrm{F}$ has the matrix\\
	\begin{center}
		$\left\lbrack
		\begin{array}{ll}
		\mathrm{F}_{1} & 0 \\
		0 & \mathrm{F}_{4} \\
		\end{array}
		\right \rbrack,
		$
	\end{center}
	where $\mathrm{F}_{1}$ is an isomorphism $M_{1},M_{2},N_{1},N_{2}$ are closed submodules of $H_{\mathcal{A}} $ and $N_{1}$ is finitely generated. Similarly, we say that $\mathrm{F}$ is a lower semi-{$\mathcal{A}$}-Fredholm operator if all the above conditions hold except that in this case we assume that $N_{2}$ ( and not $N_{1}$ ) is finitely generated.	
	\end{definition}
	Set
	\begin{center}
		$\mathcal{M}\Phi_{+}(H_{\mathcal{A}})=\lbrace \mathrm{F} \in B^{a}(H_{\mathcal{A}}) \mid \mathrm{F} $ is upper semi-{$\mathcal{A}$}-Fredholm $\rbrace ,$	
	\end{center}
	\begin{center}
		$\mathcal{M}\Phi_{-}(H_{\mathcal{A}})=\lbrace \mathrm{F} \in B^{a}(H_{\mathcal{A}}) \mid \mathrm{F} $ is lower semi-{$\mathcal{A}$}-Fredholm $\rbrace ,$	
	\end{center}
	\begin{center}
		$\mathcal{M}\Phi(H_{\mathcal{A}})=\lbrace \mathrm{F} \in B^{a}(H_{\mathcal{A}}) \mid \mathrm{F} $ is $\mathcal{A}$-Fredholm operator on $H_{\mathcal{A}}\rbrace .$ 
	\end{center} 
	Next, we let $ K^{*}(H_{\mathcal{A}})  $ denote the set of all adjointable compact operators in the sense of \cite[Section 2.2]{MT} and we let $K(l_{2}(\mathcal{A}))  $ denote the set of all compact operators (not necessarily adjointable) in the sense of \cite{IM}. We set $\widehat{ {\mathcal{M}\Phi}}_{l}(l_{2}(\mathcal{A})) $ to be the class of operators in $B(l_{2}(\mathcal{A}))  $ that have $  {\mathcal{M} \Phi }_{+}$-decomposition defined above, but are not necessarily adjointable. Hence $ {\mathcal{M} \Phi }_{+} (H_{\mathcal{A}})= \widehat{ {\mathcal{M}\Phi}}_{l} (l_{2}(\mathcal{A})) \cap B^{a} ( H_{\mathcal{A}} ).$ Similarly, we set $  \widehat{ {\mathcal{M}\Phi}}_{r} (l_{2}(\mathcal{A})) $ to be the set of all operators in $B(l_{2}(\mathcal{A}))  $ that have $ {\mathcal{M} \Phi }_{-}$- decomposition but are not necessarily adjointable. Thus ${\mathcal{M} \Phi }_{-} (H_{\mathcal{A}})=\widehat{ {\mathcal{M}\Phi}}_{r} (l_{2}(\mathcal{A})) \cap B^{a} ( H_{\mathcal{A}} ).$ Finally, we set $\widehat{ {\mathcal{M}\Phi}} (l_{2}(\mathcal{A})) $ to be the set of all $\mathcal{A}$-Fredholm operators on $l_{2}(\mathcal{A}) $ in the sense of \cite{IM} that are not necessarily adjointable.
	\begin{remark} \label{R01} 
		\cite{I} Notice that if $M,N$ are two arbitrary Hilbert  $\mathrm{C}^{*}$-modules, the definition above could be generalized to the classes $\mathcal{M}\Phi_{+}(M,N)$ and $\mathcal{M}\Phi_{-}(M,N)$.\\
		Recall that by \cite[ Definition 2.7.8]{MT}, originally given in \cite{MF}, when $\mathrm{F} \in \mathcal{M}\Phi(H_{\mathcal{A}})     $ and 
		$$ H_{\mathcal{A}} = M_{1} \tilde \oplus {N_{1}}‎‎\stackrel{\mathrm{F}}{\longrightarrow} M_{2} \tilde \oplus N_{2}= H_{\mathcal{A}} $$
		is an $ \mathcal{M}\Phi    $ decomposition for $  \mathrm{F}   $, then the index of $\mathrm{F}$ takes values in $K(\mathcal{A})$ and is defined by index $ \mathrm{F}=[N_{1}]-[N_{2}] \in K(\mathcal{A})    $ where $[N_{1}]    $ and $ [N_{2}] $ denote  the isomorphism classes of $ N_{1}    $ and $ N_{2} $ respectively. „By \cite[ Definition 2.7.9]{MT}, the index is well defined. 
	\end{remark}
	\begin{remark} \label{R08} 
		By \cite[Proposition 4.3]{I2} it follows that $\widehat{{\mathcal{M}\Phi}}_{l} (l_{2}(\mathcal{A})), \widehat{{\mathcal{M}\Phi}}_{r} (l_{2}(\mathcal{A}))  $ are closed under multiplication as these sets coincide with the sets of all left-invertible and all right invertible elements in the Calkin algebra $B(l_{2}(\mathcal{A})) / K(l_{2}(\mathcal{A}))  ,$ respectively.	
	\end{remark}	
	\begin{lemma} \label{L08} 
		Let $ F,D \in B^{a}(H_{\mathcal{A}})  $ and suppose that $ Im F, Im D, Im DF$ are closed. Then there exist closed submodules $X, W, M^{\prime}$ s.t. $Im F=W \oplus (\ker D \cap Im F), $ $Im D=Im DF \oplus X, \ker D = M^{\prime} \oplus (\ker D \cap Im F).$ Moreover, $H_{\mathcal{A}}=W \tilde{ \oplus} S(X) \oplus \ker D,$  where $S={D_{\mid_{\ker D^{\bot}}}}^{-1}  .$
	\end{lemma}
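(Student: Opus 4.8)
The plan is to build the three orthogonal splittings from the closed-range hypotheses and then transport everything into $H_{\mathcal{A}}$ through the partial inverse $S$. First, since $D$ is adjointable with closed range, the standard Hilbert-module splitting gives $H_{\mathcal{A}} = \ker D \oplus \ker D^{\perp}$ with $\ker D^{\perp} = Im D^{*}$, and $D$ restricts to an isomorphism $\ker D^{\perp} \to Im D$ whose inverse is exactly $S$; in particular $S$ is injective on $Im D$ and $S(Im D) = \ker D^{\perp}$. The same splitting applied to $F$ and to $DF$ shows that $Im F$ and $Im DF$ are orthogonally complemented in $H_{\mathcal{A}}$.

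Next I produce the three summands. For $X$, since $Im DF \subseteq Im D$ and $Im DF$ is orthogonally complemented in $H_{\mathcal{A}}$, I set $X := Im D \cap (Im DF)^{\perp}$, which gives $Im D = Im DF \oplus X$. For $W$, I consider the operator $T := D\,\iota$, where $\iota\colon Im F \hookrightarrow H_{\mathcal{A}}$ is the inclusion; because $Im F$ is orthogonally complemented, $\iota$ is adjointable, hence so is $T$, and $\ker T = \ker D \cap Im F$ while $Im T = Im DF$ is closed. The closed-range splitting for $T$ then yields $Im F = (\ker D \cap Im F) \oplus Im T^{*}$, so I take $W := Im T^{*}$. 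For $M'$ I need $\ker D \cap Im F$ to be orthogonally complemented inside the closed submodule $\ker D$; this is the one place where the $W^{*}$-hypothesis is essential, and I would invoke the cited complementation results \cite[Corollary 3.6.4]{MT}, \cite[Corollary 3.6.7]{MT}, \cite[Proposition 3.6.8]{MT} to obtain such an $M'$. \textbf{This step is the main obstacle:} the complement $W$ comes for free from adjointability and closed range, but there is no adjointable operator manifestly exhibiting $\ker D \cap Im F$ as a complemented submodule of $\ker D$, so one must rely on the special complementability available over a $W^{*}$-algebra (this is also why the lemma is not asserted over arbitrary $C^{*}$-algebras).

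Finally I assemble the global decomposition. Since $D$ annihilates $\ker D \cap Im F$, one has $Im DF = D(Im F) = D(W)$, and for $w \in W$ a direct computation gives $S(Dw) = P_{\ker D^{\perp}}(w)$, so $S(Im DF) = P_{\ker D^{\perp}}(W)$. The restriction $P_{\ker D^{\perp}}|_{W}$ is injective, because $P_{\ker D^{\perp}}w = 0$ forces $w \in W \cap \ker D \subseteq W \cap (\ker D \cap Im F) = \{0\}$ by the orthogonality defining $W$; hence the sum $\ker D \tilde{\oplus} W$ is direct and $\ker D + W = \ker D + S(Im DF) = \ker D \oplus S(Im DF)$. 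Applying the injective map $S$ to $Im D = Im DF \oplus X$ gives $\ker D^{\perp} = S(Im D) = S(Im DF)\,\tilde{\oplus}\, S(X)$ with $S(X) \subseteq \ker D^{\perp}$, so that $S(X) \perp \ker D$. Substituting into $H_{\mathcal{A}} = \ker D \oplus \ker D^{\perp}$ and replacing $\ker D \oplus S(Im DF)$ by $\ker D \tilde{\oplus} W$ yields $H_{\mathcal{A}} = (\ker D \tilde{\oplus} W) \tilde{\oplus} S(X) = W \tilde{\oplus} \big(S(X) \oplus \ker D\big)$, which is precisely the asserted decomposition $H_{\mathcal{A}} = W \tilde{\oplus} S(X) \oplus \ker D$; the only routine checks left are the triviality of the remaining pairwise intersections, which follow from $S(Im DF) \cap S(X) = \{0\}$ and $S(X) \subseteq \ker D^{\perp}$.
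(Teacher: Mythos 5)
Your construction of $X$, $W$, and the final decomposition $H_{\mathcal{A}}=W\tilde{\oplus}S(X)\oplus\ker D$ is correct and is essentially the paper's argument: both treat $D$ restricted to $Im F$ as an adjointable operator with closed range $Im DF$ to split $Im F$ and $Im D$, and both transport $W$ into $\ker D^{\perp}$ via $P_{\ker D^{\perp}}\mid_{W}=S\circ D\mid_{W}$. The problem is the step you flag as ``the main obstacle,'' namely producing $M^{\prime}$ with $\ker D=M^{\prime}\oplus(\ker D\cap Im F)$. There is no obstacle there, and the fix you propose would not work: \cite[Corollary 3.6.4]{MT} and its companions require self-duality of the module being decomposed (in the paper they are applied to $\ker G$ only when it is finitely generated, hence self-dual), whereas here $\ker D$ can be an arbitrary closed complemented submodule of $H_{\mathcal{A}}$, so those results do not apply. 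As stated, your proof of the third splitting therefore has a genuine gap.

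The correct argument is already contained in what you proved. From $Im F=W\oplus(\ker D\cap Im F)$ (orthogonal, by your operator $T$) together with $H_{\mathcal{A}}=Im F\oplus Im F^{\perp}$, you get the orthogonal decomposition $H_{\mathcal{A}}=(\ker D\cap Im F)\oplus(W\oplus Im F^{\perp})$, i.e.\ $(\ker D\cap Im F)^{\perp}=W\oplus Im F^{\perp}$. Since $\ker D\cap Im F\subseteq\ker D$ and $\ker D$ is closed, Lemma \ref{L10} gives $\ker D=(\ker D\cap Im F)\oplus M^{\prime}$ with $M^{\prime}=\ker D\cap(W\oplus Im F^{\perp})$, which is exactly how the paper proceeds. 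In particular no $W^{*}$-specific complementability is needed anywhere in this lemma; everything follows from \cite[Theorem 2.3.3]{MT}, which holds over arbitrary $C^{*}$-algebras. With that replacement your proof coincides with the paper's.
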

	\begin{proof}
		Since $Im F, Im D, Im DF$ are closed, by \cite[Theorem 2.3.3]{MT} they are orthogonally complementable. Also $\ker D  $ is orthogonally complementable. The operator $D_{\mid_{Im F}}  $ can therefore be viewed as an adjointable operator from $Im F $ into $Im D.$ Since $Im D_{\mid_{Im F}} =Im DF  $ is closed, by \cite[Theorem 2.3.3]{MT} $Im D=ImDF \oplus X $ for some closed submodule $X.$ Also $Im F = W \oplus \ker D_{\mid_{Im F}} = W \oplus (\ker D \cap Im F) $ for some closed submodule $W.$ Hence $H_{\mathcal{A}}= W \oplus (\ker D \cap Im F) \oplus Im F^{\perp}.$ Therefore $\ker D=(\ker D \cap Im F) \oplus M^{\prime},  $  where  $M^{\prime}=\ker D \cap (W \oplus Im F^{\perp}).$ Now, $ D_{\mid_{W}} $ is an isomorphism onto $Im DF.$ But $D_{\mid_{W}}=DP_{\mid_{W}}  $ where $P$ denotes the orthogonal projection onto $\ ker D^{\perp}  .$ It follows then that $P_{\mid_{W}}  $ must be bounded below, hence $P(W)$ is closed in $\ker D^{\perp}.$ In addition $P(W)=S(Im DF)$ where $S={D_{\ker D^{\perp}}}^{-1}  $ is the operator from $Im D$ onto $\ker D^{\perp}.$ Since $Im D=Im DF \oplus X  $ and $S$ is an isomorphism, we have that $ \ker D^{\perp}=S(ImDF) \tilde{ \oplus} S(X) .$ Hence $H_{\mathcal{A}}=S(Im DF) \tilde{ \oplus} S(X) \oplus \ker D.$ But $P_{\mid_{W}}  $ is an isomorphism from $W$ onto $S(Im DF).$ Hence $H_{\mathcal{A}}=W \tilde{ \oplus} S(X) \oplus \ker D .$ 
	\end{proof}	
	\begin{lemma} \label{L10} 
		Let $M, N$ be closed submodules of $H_{\mathcal{A}}  $ such that $M \subseteq N  $ and $H_{\mathcal{A}}= M \oplus M^{\perp}  .$ Then $N=M \oplus (N \cap M^{\perp}).$
	\end{lemma}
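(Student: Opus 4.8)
The plan is to verify the two inclusions defining the decomposition $N = M \oplus (N \cap M^{\perp})$ directly, exploiting the given orthogonal splitting $H_{\mathcal{A}} = M \oplus M^{\perp}$ together with the hypothesis $M \subseteq N$. The easy inclusion $M + (N \cap M^{\perp}) \subseteq N$ is immediate: by hypothesis $M \subseteq N$, while $N \cap M^{\perp} \subseteq N$ trivially, and since $N$ is a submodule it is closed under addition, so every element of $M + (N \cap M^{\perp})$ lies in $N$.

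For the reverse inclusion $N \subseteq M + (N \cap M^{\perp})$, I would take an arbitrary $n \in N$ and use the orthogonal decomposition $H_{\mathcal{A}} = M \oplus M^{\perp}$ to write uniquely $n = m + m'$ with $m \in M$ and $m' \in M^{\perp}$. Because $M \subseteq N$, we have $m \in N$, and hence $m' = n - m \in N$ since $N$ is a submodule. Thus $m' \in N \cap M^{\perp}$ and $n = m + m' \in M + (N \cap M^{\perp})$, as desired.

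Finally I would confirm that the sum is a genuine orthogonal direct sum: since $N \cap M^{\perp} \subseteq M^{\perp}$, the summands $M$ and $N \cap M^{\perp}$ are mutually orthogonal, so in particular $M \cap (N \cap M^{\perp}) \subseteq M \cap M^{\perp} = \{0\}$, which yields $N = M \oplus (N \cap M^{\perp})$ as an internal orthogonal direct sum. There is essentially no obstacle in this argument; the only point deserving a word is that $N \cap M^{\perp}$ is a legitimate closed submodule, which follows at once from $N$ and $M^{\perp}$ both being closed, so that it can serve as an orthogonal summand.
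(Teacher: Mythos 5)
Your argument is correct and is essentially the same as the paper's: decompose an arbitrary $n \in N$ as $n = m + m'$ via $H_{\mathcal{A}} = M \oplus M^{\perp}$ and use $M \subseteq N$ to conclude $m' = n - m \in N \cap M^{\perp}$. The paper records only this reverse inclusion; your additional checks of the forward inclusion and of orthogonality are routine and consistent with it.
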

	\begin{proof}
		Since $H_{\mathcal{A}}= M \oplus M^{\perp} ,$ every $ z \in N  $ can be written as $z=x+y$ where $x \in M, y \in M^{\perp}  .$ Hence $ z-x \in N ,$ as $z \in N  $ and $x \in M \subseteq N   .$ Thus $y \in N \cap M^{\perp} .$
	\end{proof}
\section{Semi-Fredholm operators and closed range operators over $W^{*}$-algebras}
We start with the following proposition.
\begin{proposition} \label{P01} 
	Let $F \in  \widehat{{\mathcal{M}\Phi}}_{l} (l_{2}{(\mathcal{A})})$ or $F \in  {\mathcal{M}\Phi}_{+} (H_{\mathcal{A}}) .$ Then there exists a decomposition.
	$$H_{\mathcal{A}} = M_{0} \tilde \oplus M_{1}^{\prime} \tilde \oplus \ker  F‎\stackrel{F}{\longrightarrow}  N_{0} \tilde \oplus N_{1}^{\prime} \tilde \oplus {N_{1}^{\prime}}^{\prime}= H_{\mathcal{A}} $$
	w.r.t. which F has the matrix
	\begin{center}
	$\left\lbrack
	\begin{array}{lll}
		F_{0} & 0 & 0\\
		0 & F_{1} & 0 \\
		0 & 0 & 0\\
		\end{array}
		\right \rbrack
	$
	\end{center} 
	where $F_{0}$ is an isomorphism, $M_{1}^{\prime}$ and $\ker  F$ are finitely generated. Moreover $M_{1}^{\prime} \cong N_{1}^{\prime}$
	If $ F \in \widehat{ \mathcal{M} \Phi }_{l}  (l_{2}(\mathcal{A}))$ and $Im F$ is closed, then $Im F$ is complementable in $l_{2}(\mathcal{A}).$
\end{proposition}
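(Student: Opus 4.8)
The plan is to begin with a semi-$\mathcal{A}$-Fredholm decomposition supplied by Definition \ref{D01} and then refine it, peeling the kernel off the finitely generated block and exploiting throughout that $\mathcal{A}$ is a $W^{*}$-algebra. In either case ($F \in \widehat{\mathcal{M}\Phi}_{l}(l_{2}(\mathcal{A}))$ or $F \in \mathcal{M}\Phi_{+}(H_{\mathcal{A}})$) there is a decomposition $H_{\mathcal{A}} = M_{1} \tilde{\oplus} N_{1} \stackrel{F}{\longrightarrow} M_{2} \tilde{\oplus} N_{2} = H_{\mathcal{A}}$ whose matrix is diagonal, with an isomorphism in the $(1,1)$-slot and a second block $F_{4} : N_{1} \to N_{2}$, where $N_{1}$ is finitely generated. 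Since the first block is injective, $\ker F = \ker F_{4} \subseteq N_{1}$. I would set $M_{0} := M_{1}$, $N_{0} := M_{2}$, and let $F_{0}$ be the isomorphism $M_{1} \to M_{2}$; this disposes of the isomorphism block, and all remaining work concerns $F_{4}$.

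First I would record that, being finitely generated over the $W^{*}$-algebra $\mathcal{A}$, the module $N_{1}$ is self-dual; by the $W^{*}$-tools \cite[Corollary 3.6.4]{MT} this makes $B^{a}(N_{1})$ a von Neumann algebra and renders every bounded $\mathcal{A}$-linear map out of $N_{1}$ adjointable, so in particular $F_{4}$ (and its restrictions) are adjointable even in the non-adjointable case $F \in \widehat{\mathcal{M}\Phi}_{l}(l_{2}(\mathcal{A}))$. The next step is to split off the kernel inside $N_{1}$: the positive operator $F_{4}^{*}F_{4} \in B^{a}(N_{1})$ has, in the von Neumann algebra $B^{a}(N_{1})$, a spectral projection $P_{0} = \chi_{\{0\}}(F_{4}^{*}F_{4})$ whose range is exactly $\ker(F_{4}^{*}F_{4}) = \ker F_{4} = \ker F$. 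Hence $N_{1} = M_{1}' \oplus \ker F$ with $M_{1}' := (1-P_{0})N_{1}$, and, being direct summands of the finitely generated module $N_{1}$, both $M_{1}'$ and $\ker F$ are finitely generated.

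With the kernel removed, $G := F_{4}|_{M_{1}'} : M_{1}' \to N_{2}$ is injective and adjointable. I would put $N_{1}' := \overline{Im\, G}$; by the $W^{*}$-tools \cite[Corollary 3.6.7]{MT}, \cite[Proposition 3.6.8]{MT} the closure of the range of an adjointable operator is orthogonally complementable in $H_{\mathcal{A}}$, and since $N_{1}' \subseteq N_{2}$, Lemma \ref{L10} yields $N_{2} = N_{1}' \oplus N_{1}''$ with $N_{1}'' = N_{2} \cap {N_{1}'}^{\perp}$, so that $H_{\mathcal{A}} = N_{0} \tilde{\oplus} N_{1}' \tilde{\oplus} N_{1}''$. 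To obtain $M_{1}' \cong N_{1}'$ I would invoke the polar decomposition $G = V|G|$ available over the $W^{*}$-algebra: as $G$ is injective, $|G| = (G^{*}G)^{1/2}$ has trivial kernel and dense range in $M_{1}'$, so the partial isometry $V$ has initial space $M_{1}'$ and final space $\overline{Im\, G} = N_{1}'$, i.e.\ $V$ restricts to an isometric module isomorphism $M_{1}' \to N_{1}'$. Assembling the three blocks gives the asserted decomposition with matrix $\mathrm{diag}(F_{0}, F_{1}, 0)$, the middle block $F_{1}$ being $G$ regarded as a map into $N_{1}'$; note $F_{1}$ itself need not be an isomorphism, only $M_{1}' \cong N_{1}'$ is claimed. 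I expect this last isomorphism to be the main obstacle: since $G$ may fail to have closed range one cannot simply take $N_{1}' = Im\, G$, and it is exactly the $W^{*}$-structure (self-duality, hence polar decomposition) that rescues the argument by identifying $M_{1}'$ with the closure $\overline{Im\, G}$.

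For the final assertion, suppose $F \in \widehat{\mathcal{M}\Phi}_{l}(l_{2}(\mathcal{A}))$ has closed image. From the decomposition just built, $Im\, F = N_{0} \tilde{\oplus} Im\, G$ with $Im\, G \subseteq N_{1}'$. Since $N_{0} \cap N_{1}' = \{0\}$ one checks that $Im\, G = Im\, F \cap N_{1}'$, an intersection of two closed submodules, hence closed; therefore $Im\, G = \overline{Im\, G} = N_{1}'$ and $Im\, F = N_{0} \tilde{\oplus} N_{1}'$. This submodule is complemented in $l_{2}(\mathcal{A})$ by $N_{1}''$, which proves that $Im\, F$ is complementable.
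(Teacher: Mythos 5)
Your skeleton is the right one and, up to the middle block, matches the paper's intention: the paper's proof is a one-line appeal to the arguments of \cite[Proposition 3.6.8]{MT}, and your splitting $N_{1}=M_{1}'\oplus\ker F$ via the spectral projection $\chi_{\{0\}}(F_{4}^{*}F_{4})$ in the von Neumann algebra $B^{a}(N_{1})$ is a correct implementation of the step the paper handles with \cite[Corollary 3.6.4]{MT} (only your attributions are off: self-duality of finitely generated modules and adjointability of bounded maps out of them are Paschke-type facts, not Corollary 3.6.4). The genuine gap is in your treatment of $G=F_{4}|_{M_{1}'}$. You set $N_{1}':=\overline{Im\,G}$ (norm closure) and argue that since $|G|$ is injective it has \emph{dense} range in $M_{1}'$, so the polar partial isometry $V$ is an isometry of $M_{1}'$ onto $\overline{Im\,G}$. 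That is a Hilbert-space reflex which fails for Hilbert $W^{*}$-modules: a submodule with zero orthogonal complement need not be norm-dense, and an adjointable operator admits a polar decomposition with norm-closed initial and final spaces only when $\overline{Im\,|G|}$ and $\overline{Im\,G}$ are orthogonally complementable, which you have not shown (your supporting claim that "the closure of the range of an adjointable operator is orthogonally complementable in $H_{\mathcal{A}}$" is not what \cite[Corollary 3.6.7]{MT} or \cite[Proposition 3.6.8]{MT} say, and it is false over $W^{*}$-algebras).

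Concretely, take $\mathcal{A}=L^{\infty}[0,1]$ and $F(x_{1},x_{2},x_{3},\dots)=(tx_{1},x_{2},x_{3},\dots)$ on $H_{\mathcal{A}}$, where $t$ is multiplication by the independent variable. This $F$ is adjointable and lies in $\mathcal{M}\Phi_{+}(H_{\mathcal{A}})$ (take $M_{1}=M_{2}=\{x : x_{1}=0\}$, $N_{1}=N_{2}=\mathcal{A}e_{1}$, $F_{1}=$ identity). Your construction gives $\ker F=0$, $M_{1}'=\mathcal{A}e_{1}\cong\mathcal{A}$, and $G=$ multiplication by $t$ on $\mathcal{A}$: injective, yet $\overline{t\mathcal{A}}\neq\mathcal{A}$ since $\|1-ta\|_{\infty}\geq\frac{1}{2}$ for every $a$ (look at a small interval near $0$). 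Moreover $M_{1}'\not\cong\overline{t\mathcal{A}}$: a bounded module isomorphism $T:\mathcal{A}\to\overline{t\mathcal{A}}\subseteq\mathcal{A}$ would be adjointable ($\mathcal{A}$ is self-dual) with closed range, so by \cite[Theorem 2.3.3]{MT} its range would be an orthogonal summand of $\mathcal{A}$; since $(t\mathcal{A})^{\perp}=0$ this forces $\overline{t\mathcal{A}}=\mathcal{A}$, a contradiction. So your claimed isomorphism $M_{1}'\cong\overline{Im\,G}$ is false in general, and with it your final step, which rests on $H_{\mathcal{A}}=N_{0}\,\tilde{\oplus}\,\overline{Im\,G}\,\tilde{\oplus}\,N_{1}''$. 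The $W^{*}$-repair, which is what the argument of \cite[Proposition 3.6.8]{MT} actually provides, is to take $N_{1}'$ to be the range of the \emph{range projection} $VV^{*}$ from the polar decomposition performed at the level of the self-dual target, i.e.\ a $\sigma$-type closure of $Im\,G$ that may be strictly larger than the norm closure but still contains $Im\,G$ (so the matrix stays diagonal); in the example this yields $N_{1}'=\mathcal{A}e_{1}\supsetneq\overline{t\mathcal{A}}$ and indeed $M_{1}'\cong N_{1}'$. Note also that in the $\mathcal{M}\Phi_{+}$ case $N_{2}$ need not be finitely generated, hence not self-dual, so this step requires genuine care and not a verbatim copy of the Fredholm ($N_{2}$ finitely generated) argument.
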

\begin{proof}
	The first statement follows by the same arguments as in the proof of \cite[Proposition 3.6.8]{MT}. The second statement follows from the decomposition for $F$ from the first statement.
\end{proof}	
\begin{proposition} \label{P04} 
	If $D \in \widehat{ \mathcal{M} \Phi }_{r}  (l_{2}(\mathcal{A}))   $ and $Im D$ is closed and complementable in $l_{2}(\mathcal{A})  ,$ then the decomposition given above exists for the operator $D.$
\end{proposition}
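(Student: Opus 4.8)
The plan is to reduce to the defining $\mathcal{M}\Phi_{-}$-decomposition and then run the refinement argument of Proposition \ref{P01} (i.e. of \cite[Proposition 3.6.8]{MT}) in its ``lower'' version, with the complementability hypothesis playing the role that adjointability of the compression plays in the upper case. First I would fix a decomposition
$$H_{\mathcal{A}} = M_{1} \tilde\oplus N_{1} \stackrel{D}{\longrightarrow} M_{2} \tilde\oplus N_{2} = H_{\mathcal{A}}$$
witnessing $D \in \widehat{\mathcal{M}\Phi}_{r}(l_{2}(\mathcal{A}))$, so that $D_{1} := D|_{M_{1}} : M_{1} \to M_{2}$ is an isomorphism, $D$ is block-diagonal, and $N_{2}$ is finitely generated. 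Writing $D_{4} := D|_{N_{1}} : N_{1} \to N_{2}$, one has $\ker D = \ker D_{4} \subseteq N_{1}$ and $Im D = M_{2} \tilde\oplus Im D_{4}$, and since $M_{2} \cap N_{2} = 0$ the intersection $Im D \cap N_{2}$ equals $Im D_{4}$; hence closedness of $Im D$ forces $W := Im D_{4}$ to be closed in $N_{2}$.

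Next I would extract the finitely generated cokernel piece, and this is where the hypothesis that $Im D$ is complementable enters. Writing $H_{\mathcal{A}} = Im D \oplus L$, let $P$ be the bounded projection onto $Im D$ along $L$ and let $Q$ be the projection of $Im D = M_{2} \tilde\oplus W$ onto $W$; then $QP|_{N_{2}}$ is a bounded idempotent on $N_{2}$ fixing $W$, so its kernel $N_{1}''$ is a closed complement and $N_{2} = W \oplus N_{1}''$. Since $N_{2}$ is finitely generated over the $W^{*}$-algebra, $W$ (a direct summand) and $N_{1}''$ (isomorphic to $N_{2}/W$) are both finitely generated; in particular $W$ is a finitely generated, hence projective and self-dual, Hilbert module over $\mathcal{A}$.

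Then I would split the domain. As $W$ is finitely generated projective and $D_{4} : N_{1} \to W$ is a bounded surjection, the identity on $W$ lifts to a bounded module section $j : W \to N_{1}$ with $D_{4} j = \mathrm{id}_{W}$; since $j$ is bounded below, $M_{1}' := j(W)$ is closed, $D_{4}|_{M_{1}'} : M_{1}' \to W$ is an isomorphism, and $N_{1} = M_{1}' \tilde\oplus \ker D_{4}$ via the bounded idempotent $y \mapsto j(D_{4}y)$. Setting $M_{0} = M_{1}$, $N_{0} = M_{2}$, $N_{1}' = W$ and $\ker D = \ker D_{4}$ assembles the asserted decomposition
$$H_{\mathcal{A}} = M_{0} \tilde\oplus M_{1}' \tilde\oplus \ker D \stackrel{D}{\longrightarrow} N_{0} \tilde\oplus N_{1}' \tilde\oplus N_{1}'' = H_{\mathcal{A}},$$
with $D_{0} = D_{1}$ an isomorphism, $D|_{M_{1}'}$ an isomorphism onto $N_{1}'$, $\ker D$ mapped to $0$, and $M_{1}' \cong N_{1}'$; the finitely generated pieces are now $M_{1}', N_{1}', N_{1}''$, mirroring the role of $\ker F$ in Proposition \ref{P01}, while $\ker D$ may fail to be finitely generated.

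I expect the main obstacle to be exactly the point where non-adjointability of $D$ obstructs the upper-case argument: because the finitely generated block $N_{2}$ is the codomain rather than the domain of $D_{4}$, the operator $D_{4}$ need not be adjointable and \cite[Theorem 2.3.3]{MT} is unavailable, so neither the complementation of the image nor the splitting of the kernel is automatic (this is precisely why complementability, which was a \emph{conclusion} in Proposition \ref{P01}, must here be a \emph{hypothesis}). The technical heart is therefore the derivation in the second step, from complementability of $Im D$ alone, that $W$ is complemented in $N_{2}$ and finitely generated; once this is secured, finite generation and projectivity over the $W^{*}$-algebra make the remaining sections and idempotents routine.
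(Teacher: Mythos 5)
Your proposal is correct and follows essentially the same route as the paper: take the defining $\widehat{\mathcal{M}\Phi}_{r}$-decomposition, observe that $D(N_{1})=Im\,D\cap N_{2}$ is closed and, using complementability of $Im\,D$, complemented in $N_{2}$, hence finitely generated projective, and then split the epimorphism $D|_{N_{1}}:N_{1}\to D(N_{1})$ to get $N_{1}=N_{1}'\tilde{\oplus}\ker D$ with $N_{1}'\cong D(N_{1})$. Your version merely makes explicit (via the idempotent $QP|_{N_{2}}$ and the projective lifting) two steps the paper asserts without detail.
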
 
\begin{proof}
	Suppose that $F \in  \widehat{{\mathcal{M}\Phi}} _{r}(l_{2}(\mathcal{A}))$, let $l_{2}(\mathcal{A})=M_{1} \tilde \oplus {N_{1}}‎‎\stackrel{F}{\longrightarrow} M_{2} \tilde \oplus N_{2}=l_{2}(\mathcal{A})  ,$ an $\widehat{{\mathcal{M}\Phi}} _{r}(l_{2}(\mathcal{A}))$ decomposition for $F$, so that $N_{2}$ is finitely generated. Since $Im F$ is closed by assumption and $F(M_{1})=M_{2},F(N_{1})\subseteq N_{2},$ it follows easily that $F(N_{1})$ is closed. As $Im F$ is complementable by assumption, it follows that $F(N_{1})$ is complementable in $N_{2}$. Therefore $F(N_{1})$ is finitely generated projective, being a direct summand in a finitely generated, projective module $N_{2}$. Since $F_{\mid_{N_{1}}}:N_{1} \rightarrow F(N_{1})$ is an epimorphism, there exists a decimposition $N_{1}=N_{1}^{\prime} \tilde{ \oplus} \ker F$ where $N_{1}^{\prime} \cong F(N_{1}).$
\end{proof}
\begin{corollary} \label{C04} 
	1.) If $F \in  {\mathcal{M}\Phi} _{+}(H_{\mathcal{A}}) \setminus  {\mathcal{M}\Phi} (H_{\mathcal{A}}),$ then there exist $\epsilon >0$  such that  if $ D \in B^{a}(H_{\mathcal{A}})$ and  $\parallel D  \parallel < \epsilon,$ then $(F+D)$ is in  $ {\mathcal{M}\Phi} _{+}(H_{\mathcal{A}}) \setminus  {\mathcal{M}\Phi} (H_{\mathcal{A}})$ and $Im(F+D)^{\bot}$   is \underline{not} finitely generated. If $F \in \widehat{ {\mathcal{M}\Phi}}_{l} (l_{2}(\mathcal{A})) \setminus  \widehat{ {\mathcal{M}\Phi}} (l_{2}(\mathcal{A})) ,$ then the complement of $ \overline{Im F} $ is not finitely generated. \\
	2.)	If $F \in  {\mathcal{M}\Phi} _{-}(H_{\mathcal{A}}) \setminus  {\mathcal{M}\Phi} (H_{\mathcal{A}}),$ then there exists $\epsilon >0$  such that if $ D \in B^{a}(H_{\mathcal{A}}),$ and $\parallel D  \parallel < \epsilon,$ then $(F+D)\in  {\mathcal{M}\Phi} _{-}(H_{\mathcal{A}}) \setminus  {\mathcal{M}\Phi} (H_{\mathcal{A}})$ and $\ker (F+D)$ is \underline{not} finitely generated.
\end{corollary}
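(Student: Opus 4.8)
The plan is to combine the openness (stability) of the class $\mathcal{M}\Phi_+(H_{\mathcal{A}})$ with the canonical decomposition of Proposition \ref{P01}, and to isolate the non-finitely-generated ``cokernel'' summand as the invariant that survives small perturbations. For statement 1), I would first invoke the stability of $\mathcal{M}\Phi_+$ established in \cite{I}: there is $\epsilon_1>0$ so that $\|D\|<\epsilon_1$ forces $F+D\in\mathcal{M}\Phi_+(H_{\mathcal{A}})$. This already settles membership in $\mathcal{M}\Phi_+$; the whole difficulty is to certify $F+D\notin\mathcal{M}\Phi$ together with the non-finite-generation of $Im(F+D)^{\bot}$, uniformly in $D$.

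Next I would apply Proposition \ref{P01} to $F$ to fix a decomposition $H_{\mathcal{A}}=(M_0\tilde\oplus M_1^{\prime})\tilde\oplus\ker F\stackrel{F}{\longrightarrow}(N_0\tilde\oplus N_1^{\prime})\tilde\oplus {N_1^{\prime}}^{\prime}$ with $F=\mathrm{diag}(F_0,F_1,0)$, where $F_0,F_1$ are isomorphisms and $M_1^{\prime},\ker F$ are finitely generated. Because $F\in\mathcal{M}\Phi_+\setminus\mathcal{M}\Phi$, the cokernel summand ${N_1^{\prime}}^{\prime}$ cannot be finitely generated, since otherwise both the kernel and the cokernel would be finitely generated and $F$ would be $\mathcal{A}$-Fredholm. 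Writing $F+D$ as a $2\times2$ operator matrix with respect to $M:=M_0\tilde\oplus M_1^{\prime}$ and $\ker F$ (rows indexed by $N:=N_0\tilde\oplus N_1^{\prime}$ and ${N_1^{\prime}}^{\prime}$), the $(N,M)$-block is $\mathrm{diag}(F_0,F_1)$ plus a small term, hence remains an isomorphism of $M$ onto $N$ once $\|D\|$ is below the invertibility bound of that block; multiplying $F+D$ on both sides by the obvious invertible triangular operator matrices reduces it to $\mathrm{diag}(A_D,G_D)$ with $A_D:M\to N$ an isomorphism and $G_D:\ker F\to {N_1^{\prime}}^{\prime}$. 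Since this equivalence is effected by invertible operators, $Im(F+D)^{\bot}$ is isomorphic to the orthogonal complement of $\overline{Im\,G_D}$ in ${N_1^{\prime}}^{\prime}$, so everything reduces to one fact about $G_D$.

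The crux, which I expect to be the main obstacle, is the following finite-generation stability fact: if $G:L\to P$ is adjointable with $L$ finitely generated and $P$ a non-finitely-generated complementable submodule, then $P\ominus\overline{Im\,G}$ is again non-finitely generated. Here the $W^{*}$-hypothesis is essential: since $\mathcal{A}$ is a $W^{*}$-algebra, $B^{a}(H_{\mathcal{A}})$ is a von Neumann algebra, so the range projection of $G$ exists and is Murray--von Neumann subequivalent to the (finite) projection onto $L$; hence $\overline{Im\,G}$ is isomorphic to a complementable submodule of a finitely generated free module and is therefore finitely generated projective (this is exactly where \cite[Corollary 3.6.4]{MT} and \cite[Corollary 3.6.7]{MT} enter). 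Were $P\ominus\overline{Im\,G}$ finitely generated, then $P=\overline{Im\,G}\oplus(P\ominus\overline{Im\,G})$ would be finitely generated, a contradiction. Applying this with $P={N_1^{\prime}}^{\prime}$ and $G=G_D$ shows $Im(F+D)^{\bot}$ is not finitely generated, hence $F+D\notin\mathcal{M}\Phi$, for every $\|D\|<\epsilon:=\min(\epsilon_1,\text{invertibility bound of the block})$, which is the desired uniform $\epsilon$. The non-adjointable clause is handled by the same decomposition of Proposition \ref{P01} applied to $F\in\widehat{\mathcal{M}\Phi}_l(l_2(\mathcal{A}))$: its cokernel summand ${N_1^{\prime}}^{\prime}$ is again non-finitely generated, and when $Im F$ is closed it coincides, via the second part of Proposition \ref{P01}, with the complement of $\overline{Im F}$.

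Finally, for statement 2) I would pass to adjoints. Using that taking adjoints interchanges $\mathcal{M}\Phi_+(H_{\mathcal{A}})$ and $\mathcal{M}\Phi_-(H_{\mathcal{A}})$ and preserves $\mathcal{M}\Phi(H_{\mathcal{A}})$, the operator $F^{*}$ lies in $\mathcal{M}\Phi_+\setminus\mathcal{M}\Phi$; applying statement 1) to $F^{*}$ and the perturbation $D^{*}$ (with $\|D^{*}\|=\|D\|$) yields $F^{*}+D^{*}\in\mathcal{M}\Phi_+\setminus\mathcal{M}\Phi$ with $Im(F^{*}+D^{*})^{\bot}$ not finitely generated. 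Since $\ker(F+D)=Im((F+D)^{*})^{\bot}=Im(F^{*}+D^{*})^{\bot}$ and $(F^{*}+D^{*})^{*}=F+D\in\mathcal{M}\Phi_-\setminus\mathcal{M}\Phi$, the conclusion follows. The perturbation reduction and the adjoint duality are routine once Proposition \ref{P01} and the openness from \cite{I} are available; the genuine work is the von Neumann algebra comparison argument of the third paragraph.
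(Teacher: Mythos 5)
Your proposal is correct in substance, but it is organized in the reverse logical order from the paper's proof, and it does strictly more work. The paper cites \cite[Theorem 4.1]{I} for the openness of the \emph{whole set} ${\mathcal{M}\Phi}_{+}(H_{\mathcal{A}}) \setminus {\mathcal{M}\Phi}(H_{\mathcal{A}})$, so that $F+D \notin {\mathcal{M}\Phi}(H_{\mathcal{A}})$ is an input, not something to be proved; it then applies Proposition \ref{P01} to the \emph{perturbed} operator $F+D$ and argues that the non-finitely-generated cokernel-side summand $\overline{(F+D)(N_{1}^{\prime})} \oplus Im(F+D)^{\bot}$ must have its second factor non-finitely generated because the first factor is isomorphic to the finitely generated $N_{1}^{\prime}$. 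You instead assume only the openness of ${\mathcal{M}\Phi}_{+}$, apply Proposition \ref{P01} to $F$, redo the perturbation and diagonalization by hand, prove directly (via the comparison argument over the $W^{*}$-algebra) that the complement of $\overline{Im\,G_{D}}$ is not finitely generated, and then \emph{deduce} $F+D \notin {\mathcal{M}\Phi}$. This buys independence from the full strength of \cite[Theorem 4.1]{I}, at the cost of two steps you should make explicit: (i) the identification of $Im(F+D)^{\bot}$ with a complement of $\overline{Im\,G_{D}}$ in ${N_{1}^{\prime}}^{\prime}$ (you need that $\overline{Im\,G_{D}}$ is complementable there, which follows from $\ker F$ being finitely generated hence self-dual, and that any two complements of the same closed submodule are isomorphic); and (ii) the implication ``$Im(F+D)^{\bot}$ not finitely generated $\Rightarrow$ $F+D \notin {\mathcal{M}\Phi}$,'' which rests on the uniqueness-up-to-stabilization of ${\mathcal{M}\Phi}$-decompositions and is exactly the contrapositive of the step the paper uses. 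One small inaccuracy: $B^{a}(H_{\mathcal{A}})$ is not itself a von Neumann algebra (only its extension to the self-dual completion is), so the Murray--von Neumann phrasing should be replaced by a direct appeal to \cite[Corollary 3.6.4]{MT} and \cite[Corollary 3.6.7]{MT}, which you do cite and which give precisely that $\overline{Im\,G_{D}}$ is finitely generated and complementable. Your treatment of the non-adjointable clause and of part 2) by passing to adjoints via \cite[Corollary 2.11]{I} coincides with the paper's.
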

\begin{proof}
	It was shown in \cite[Theorem 4.1]{I}  that there exists an $\epsilon >0$  such that  $$(F+D)\in  {\mathcal{M}\Phi} _{+}(H_{\mathcal{A}}) \setminus  {\mathcal{M}\Phi} (H_{\mathcal{A}}),$$  whenever $\parallel D  \parallel < \epsilon.$ \\
	Now, since $$(F+D)\in  {\mathcal{M}\Phi} _{+}(H_{\mathcal{A}}) \setminus  {\mathcal{M}\Phi} (H_{\mathcal{A}}),$$ by Proposition \ref{P01} there exists a decomposition 
	$$H_{\mathcal{A}} = M_{1} \tilde \oplus (N_{1}^{\prime}   \oplus \ker (F+D) $$
	\begin{center}
		$\downarrow F+D$
	\end{center}
	$$H_{\mathcal{A}} = M_{2}  \oplus \overline{ (F+D)(N_{1}^{\prime})} \oplus Im(F+D)^{\bot}$$ w.r.t. which $(F+D)$ has the matrix
	\begin{center}
		$\left\lbrack
		\begin{array}{lll}
		(F+D)_{1} & 0 & 0\\
		0 & (F+D)_{4} & 0\\
		0 & 0 & 0\\
		\end{array}
		\right \rbrack
		,$
	\end{center} 
	where $(F+D)_{1}$ is an isomorphism and $N_{1}^{\prime}   \oplus \ker (F+D) $ is finitely generated, but $$\overline{ (F+D)(N_{1}^{\prime})} \oplus Im(F+D)^{\bot}$$ is \underline{not} finitely generated, as $(F+D) \notin  {\mathcal{M}\Phi} (H_{\mathcal{A}}).$ Now, since by Proposition \ref{P01} $\overline{ (F+D)(N_{1}^{\prime})} \cong N_{1}^{\prime}$ and $N_{1}^{\prime}$ is finitely generated being direct summand in a finitely generated submodule $N_{1}^{\prime} \oplus \ker (F+D),$  it follows that $Im(F+D)^{\bot}$ can \underline{not} be finitely generated, as 
	$$\overline{ (F+D)(N_{1}^{\prime})} \oplus Im(F+D)^{\bot}$$  is\underline{ not} finitely generated. The proof is similar in the case when $$F \in  \widehat{{\mathcal{M}\Phi}} _{l}(l_{2}(\mathcal{A}))  \setminus {\widehat{\mathcal{M}}\Phi}(l_{2}(\mathcal{A})),$$
	We just observe that the proof of \cite[Theorem 4.1]{I} does not require the adjointability of $F$ and moreover, Proposition \ref{P01} also applies in the case when $F \in  \widehat{{\mathcal{M}\Phi}} _{l}(l_{2}(\mathcal{A})) .$\\
	2)	This can be proved by passing to the adjoints and using \cite[Corollary 2.11]{I}.
\end{proof}
\begin{lemma} \label{L01} 
	If $F \in  \widehat{{\mathcal{M}\Phi}} _{r}(l_{2}(\mathcal{A}))  \setminus {\widehat{\mathcal{M}}\Phi}(l_{2}(\mathcal{A})),$ $ImF$ is closed and complementable, then the complement of $Im F$ is not finitely generated.
\end{lemma}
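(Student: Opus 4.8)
The plan is to argue by contradiction, turning a finitely generated complement of the range into enough extra structure to force $F$ into $\widehat{\mathcal{M}\Phi}(l_2(\mathcal{A}))$. So suppose, contrary to the claim, that $\operatorname{Im} F$ admits a finitely generated complement $X$, that is, $l_2(\mathcal{A}) = \operatorname{Im} F \tilde\oplus X$ with $X$ finitely generated. Since $F \in \widehat{\mathcal{M}\Phi}_r(l_2(\mathcal{A}))$ and $\operatorname{Im} F$ is closed and complementable, Proposition \ref{P04} applies and produces a decomposition
$$l_2(\mathcal{A}) = M_1 \tilde\oplus N_1' \tilde\oplus \ker F \stackrel{F}{\longrightarrow} M_2 \tilde\oplus F(N_1') \tilde\oplus X = l_2(\mathcal{A}),$$
with respect to which $F$ is block diagonal, restricts to isomorphisms $M_1 \to M_2$ and $N_1' \to F(N_1')$, and annihilates $\ker F$; here $N_1'$ is finitely generated with $N_1' \cong F(N_1')$, and $\operatorname{Im} F = M_2 \tilde\oplus F(N_1')$, so that $X$ is exactly the complement fixed above. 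The whole problem is now reduced to deciding finite generation of the two deficiency submodules $N_1' \tilde\oplus \ker F$ (domain side) and $F(N_1') \tilde\oplus X$ (codomain side).

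The codomain side is immediate: $F(N_1') \cong N_1'$ is finitely generated by Proposition \ref{P04}, and $X$ is finitely generated by the contradiction hypothesis, so $F(N_1') \tilde\oplus X$ is finitely generated. Consequently the displayed decomposition already has finitely generated deficiency on the codomain, and it becomes a genuine $\mathcal{M}\Phi$-decomposition — whence $F \in \widehat{\mathcal{M}\Phi}(l_2(\mathcal{A}))$ and the desired contradiction — precisely once the domain deficiency $N_1' \tilde\oplus \ker F$, equivalently $\ker F$, is shown to be finitely generated as well.

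Thus the entire argument rests on the single implication: a finitely generated complement of $\operatorname{Im} F$ forces $\ker F$ to be finitely generated. The route I would take is to read this through Remark \ref{R08}: membership $F \in \widehat{\mathcal{M}\Phi}_r(l_2(\mathcal{A}))$ says the class of $F$ is right invertible in the Calkin algebra $B(l_2(\mathcal{A}))/K(l_2(\mathcal{A}))$, and I would attempt to promote the finitely generated image-complement into left invertibility of that same class, so that it becomes two-sided invertible and $F$ lands in $\widehat{\mathcal{M}\Phi}(l_2(\mathcal{A}))$. I expect this to be the main obstacle, and a delicate one: in the module setting the finite generation of a complement of the range directly controls only the cokernel, so one still has to rule out an infinitely generated kernel surviving alongside a finitely generated complement. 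Overcoming this — making the passage "finitely generated image-complement $\Rightarrow$ left invertibility modulo $K(l_2(\mathcal{A}))$" rigorous — is the crux on which the contradiction, and hence the lemma, depends, and it is exactly here that the standing assumption $F \notin \widehat{\mathcal{M}\Phi}(l_2(\mathcal{A}))$ must be brought to bear.
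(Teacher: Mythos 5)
Your use of Proposition \ref{P04} and the reduction of everything to the finite generation of $\ker F$ is the natural first move, but the implication you single out as the crux --- ``a finitely generated complement of $Im\,F$ forces $\ker F$ to be finitely generated'' --- is not merely left unproven in your proposal; it is false, so no refinement of the Calkin-algebra route through Remark \ref{R08} can close the gap. Take the adjointable co-shift $F\in B^{a}(H_{\mathcal{A}})$ with $Fe_{2n}=e_{n}$ and $Fe_{2n+1}=0$: it lies in $\mathcal{M}\Phi_{-}(H_{\mathcal{A}})\subseteq\widehat{\mathcal{M}\Phi}_{r}(l_{2}(\mathcal{A}))$ (use $M_{1}=\overline{\mathrm{span}}_{\mathcal{A}}\lbrace e_{2n}\rbrace$, $N_{1}=\overline{\mathrm{span}}_{\mathcal{A}}\lbrace e_{2n+1}\rbrace$, $M_{2}=H_{\mathcal{A}}$, $N_{2}=\lbrace 0\rbrace$), its image is all of $H_{\mathcal{A}}$, hence closed and complementable with finitely generated (zero) complement --- replace $e_{n}$ by $e_{n+1}$ in the definition if you prefer a nonzero complement $\cong\mathcal{A}$ --- and yet $\ker F\cong H_{\mathcal{A}}$ is not finitely generated, so $F\notin\widehat{\mathcal{M}\Phi}(l_{2}(\mathcal{A}))$ by Proposition \ref{P01}; in particular $F$ is right invertible modulo compacts but not left invertible modulo compacts despite the finitely generated image-complement. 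Worse for the literal statement: the proof of Proposition \ref{P04} itself shows that under the lemma's hypotheses the complement of $Im\,F$ is isomorphic to a direct summand of the finitely generated module $N_{2}$, and since any two complements of $Im\,F$ are isomorphic (project one onto the other along $Im\,F$), \emph{every} complement is automatically finitely generated. So the lemma as printed is false as stated, and the intended conclusion --- the genuine dual of the second sentence of Corollary \ref{C04}(1), consistent with Corollary \ref{C04}(2) --- must be that $\ker F$ is not finitely generated. (The paper prints no proof of this lemma at all, so the surrounding results are the only guide to the intended argument.)

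With the statement corrected, your own displayed decomposition finishes the proof in one line, with the contradiction hypothesis placed on the kernel instead of the image-complement: if $\ker F$ were finitely generated, then both deficiency modules $N_{1}'\tilde\oplus\ker F$ and $F(N_{1}')\tilde\oplus X$ would be finitely generated --- the latter automatically, since $X$ is a direct summand of $N_{2}$ by Proposition \ref{P04}, with no extra hypothesis needed --- so your block-diagonal decomposition would be an $\mathcal{M}\Phi$-decomposition and $F\in\widehat{\mathcal{M}\Phi}(l_{2}(\mathcal{A}))$, contradicting the hypothesis. Note also the logical slip in your closing sentence: in your contradiction scheme $F\notin\widehat{\mathcal{M}\Phi}(l_{2}(\mathcal{A}))$ is the statement awaiting contradiction, so it cannot simultaneously be ``brought to bear'' to establish finite generation of $\ker F$; since $\ker F$ finitely generated immediately yields $F\in\widehat{\mathcal{M}\Phi}(l_{2}(\mathcal{A}))$ by the above, any argument deriving it from the non-membership would be circular.
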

\begin{theorem} \label{T01} 
	Let $F \in B^{a}(H_{\mathcal{A}}).$ Then $F \in  {\mathcal{M}\Phi} _{+}(H_{\mathcal{A}}) $ if and only if $\ker  (F-K)$ is finitely generated for all $K \in K^{*}(H_{\mathcal{A}}).$
\end{theorem}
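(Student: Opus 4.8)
The plan is to prove the two implications separately. The forward implication is short and rests on the Calkin-algebra description already recorded, while the converse is the substantial part and will be handled by a contrapositive construction of a suitable compact perturbation.

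For the forward implication, suppose $F \in \mathcal{M}\Phi_+(H_{\mathcal{A}})$. Then $F \in \widehat{\mathcal{M}\Phi}_l(l_2(\mathcal{A}))$, so by Remark \ref{R08} the class of $F$ is left-invertible in the Calkin algebra $B(l_2(\mathcal{A}))/K(l_2(\mathcal{A}))$; choose $G \in B(l_2(\mathcal{A}))$ and $C \in K(l_2(\mathcal{A}))$ with $GF = I + C$. For any $K \in K^{*}(H_{\mathcal{A}}) \subseteq K(l_2(\mathcal{A}))$ we then have $G(F - K) = I + (C - GK)$ with $C - GK \in K(l_2(\mathcal{A}))$, so $F - K$ is again left-invertible modulo compacts and hence $F - K \in \widehat{\mathcal{M}\Phi}_l(l_2(\mathcal{A}))$. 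As $F$ and $K$ are adjointable, $F - K \in B^{a}(H_{\mathcal{A}})$, whence $F - K \in \mathcal{M}\Phi_+(H_{\mathcal{A}})$. Applying Proposition \ref{P01} to $F - K$ exhibits $\ker(F - K)$ as a finitely generated summand in the displayed decomposition, so $\ker(F - K)$ is finitely generated, as wanted.

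For the converse I argue by contraposition: assuming $F \notin \mathcal{M}\Phi_+(H_{\mathcal{A}})$ I will produce a single $K \in K^{*}(H_{\mathcal{A}})$ with $\ker(F - K)$ not finitely generated. The mechanism is the Hilbert-space idea of Lebow and Schechter. I build inductively a sequence $\{x_k\}$ in $H_{\mathcal{A}}$ with $\langle x_j, x_k \rangle = \delta_{jk}q_k$ for nonzero projections $q_k \in \mathcal{A}$ and with $\|F x_k\| < 2^{-k}$. Suppose $x_1, \dots, x_k$ have been chosen; let $L_k$ be the finitely generated submodule they generate and $L_k^{\perp}$ its orthogonal complement, which exists because finitely generated submodules are orthogonally complementable over a $W^{*}$-algebra. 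The key point is that $F|_{L_k^{\perp}}$ cannot be bounded below: were it bounded below it would be an isomorphism of $L_k^{\perp}$ onto a closed, hence (over a $W^{*}$-algebra) orthogonally complementable, submodule, and eliminating the resulting off-diagonal block against this isomorphism would put $F$ into the block form of Definition \ref{D01} with finitely generated complement $N_1 \cong L_k$, contradicting $F \notin \mathcal{M}\Phi_+(H_{\mathcal{A}})$. Hence there is $y \in L_k^{\perp}$ with $\|y\| = 1$ and $\|F y\|$ as small as desired.

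The main obstacle is to turn such an approximate-null unit vector into an admissible generator, and this is exactly where the $W^{*}$-hypothesis is used. Writing $a = \langle y, y \rangle \ge 0$ with $\|a\| = 1$, I pass to the spectral projection $q = \chi_{[1/2,1]}(a) \in \mathcal{A}$, which is nonzero since $1 \in \sigma(a)$, and set $x_{k+1} = y\,q\,a^{-1/2}$, interpreting $a^{-1/2}$ by functional calculus on the support of $q$. Then $\langle x_{k+1}, x_{k+1} \rangle = q$ is a nonzero projection, $x_{k+1} \in L_k^{\perp}$ is orthogonal to $x_1, \dots, x_k$, and $\|F x_{k+1}\| \le \sqrt{2}\,\|F y\|$, which we arrange to be below $2^{-(k+1)}$; the spectral cut is what keeps the normalizing factor bounded and prevents $\|F x_{k+1}\|$ from blowing up. With $\{x_k\}$ in hand, put $K = \sum_k \theta_{F x_k, x_k}$, where $\theta_{u,v}(z) = u\langle v, z\rangle$. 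Each $\theta_{u,v}$ is adjointable with $\theta_{u,v}^{*} = \theta_{v,u}$, and $\sum_k \|F x_k\| < \infty$ forces norm convergence, so $K \in K^{*}(H_{\mathcal{A}})$. Using $x_k = x_k q_k$ one computes $K x_k = F x_k q_k = F x_k$, so $(F - K)x_k = 0$ for all $k$. Finally the closed submodule $M = \overline{\bigoplus_k x_k \mathcal{A}}$, an infinite orthogonal sum of nonzero submodules, is not finitely generated and is contained in $\ker(F - K)$; since $M$ is orthogonally complementable in $H_{\mathcal{A}}$, Lemma \ref{L10} gives $\ker(F - K) = M \oplus (\ker(F - K) \cap M^{\perp})$, so $\ker(F - K)$ is not finitely generated, completing the contrapositive. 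The only genuinely delicate steps are the bounded-below dichotomy of the third paragraph and the spectral normalization above; both are precisely where the $W^{*}$-structure (abundance of projections, complementability of finitely generated submodules, the tools behind Proposition \ref{P01}) is essential.
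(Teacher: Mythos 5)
Your forward implication is fine and agrees with the paper's (stability of $\mathcal{M}\Phi_{+}$ under compact perturbation plus Proposition \ref{P01}), and your overall strategy for the converse --- the Lebow--Schechter perturbation $K=\sum_{k}\theta_{Fx_{k},x_{k}}$ built from an almost-null orthogonal sequence --- is exactly the paper's. The gap is in how you finish. Your induction only guarantees that the $x_{k}$ are mutually orthogonal with $\langle x_{k},x_{k}\rangle=q_{k}$ a nonzero projection; it does not force them to escape to coordinate infinity. Over an infinite-dimensional $W^{*}$-algebra an infinite mutually orthogonal family of this kind can sit inside a finitely generated, even singly generated free, submodule: take $\mathcal{A}=L^{\infty}[0,1]$ and $x_{k}=e_{1}\chi_{[2^{-k},2^{-k+1}]}\in L_{1}$. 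Consequently your last paragraph fails twice. First, $M=\overline{\bigoplus_{k}x_{k}\mathcal{A}}$ need not be orthogonally complementable in $H_{\mathcal{A}}$ (in the example its orthogonal complement inside $L_{1}$ is zero although $M\subsetneq L_{1}$), so Lemma \ref{L10} does not apply. Second, even granting that $M$ is not finitely generated, a closed submodule of a finitely generated Hilbert module need not be finitely generated, so $M\subseteq\ker(F-K)$ by itself yields nothing about $\ker(F-K)$. This is precisely why the paper insists on the stronger localization $x_{k}\in L_{n_{k}}\setminus L_{n_{k-1}}$ with $n_{k}\nearrow\infty$ supplied by \cite[Lemma 3.2]{I}: if $\ker(F-K)$ were finitely generated, some coordinate projection $p_{n}$ would be injective on it, while $p_{n}x_{k}=0$ for all large $k$ --- a contradiction your sequence, as constructed, cannot deliver. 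The repair is to choose the almost-null vector $y$ at stage $k$ not merely orthogonal to $x_{1},\dots,x_{k-1}$ but lying in $L_{n}^{\perp}$ for standard submodules $L_{n}$ with $n\to\infty$ (possible because $F\notin\mathcal{M}\Phi_{+}$ forces $F$ to fail to be bounded below on every $L_{n}^{\perp}$), and then to run the paper's $p_{n}$-argument rather than yours.

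A secondary slip: in your bounded-below dichotomy you justify complementability of $F(L_{k}^{\perp})$ by asserting that closed submodules over a $W^{*}$-algebra are orthogonally complementable. That is false in general (the module $M$ above is a counterexample); the correct justification is \cite[Theorem 2.3.3]{MT} applied to the adjointable, bounded-below operator $F_{\mid_{L_{k}^{\perp}}}$, whose closed image is therefore complementable. The spectral normalization $x_{k+1}=yqa^{-1/2}$ is correct and is a nice self-contained substitute for citing \cite[Lemma 3.2]{I}, but it is only useful once combined with the coordinate localization described above.
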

\begin{proof}
	If $F \notin  {\mathcal{M}\Phi} _{+}(H_{\mathcal{A}}),$ choose a sequence $\lbrace x_{k} \rbrace \subseteq H_{\mathcal{A}}$ and an increasing sequence $\lbrace n_{k} \rbrace \subseteq \mathbb{N}$ s.t.
	$$x_{k} \in L_{{n}_{k}} \setminus L_{{n}_{k-1}} \text{ for all } k \in \mathbb{N},  \parallel x_{k} \parallel \leq 1 \text{ for all } k \in \mathbb{N} $$
	and
	$$\parallel Fx_{k} \parallel \leq 2^{1-2k} \text{ for all } k \in \mathbb{N} .$$
	By \cite[Lemma 3.2]{I} such sequence exists. Set
	$$K_{n}x=\sum_{k=1}^n \langle x_{k},x \rangle Fx_{k} \textrm{ for } x \in  H_{\mathcal{A}}.$$ 
	Then $K_{n} \in K^{*}(H_{\mathcal{A}}), $ for all $n.$ For $n > m,$ we have
	$$\parallel (K_{n}-K_{m})x \parallel \leq \sum_{k=m+1}^n \parallel x_{k} \parallel \parallel x \parallel \parallel Fx_{k} \parallel \leq \parallel x \parallel  \sum_{k=m+1}^n 2^{1-2(k+1)} ,$$
	so  $K_{n}-K_{m} \textrm{ vanishes as } n,m\longrightarrow \infty .$\\
	
	Let $K \in K^{*}(H_{\mathcal{A}}) $ be the limit of $ K_{n}^{\prime}s$ in the operator norm. Clearly, then $$ Kx=\sum_{k=1}^\infty \langle x_{k},x \rangle Fx_{k} \textrm{ ,  }  \forall x \in  H_{\mathcal{A}}.$$
	Observe next that by the construction of the sequence $ \lbrace x_{k} \rbrace ,$
	$$\langle x_{j},x_{k} \rangle = \delta _{j,k} \textrm{ , } \forall {j,k} \textrm{ as } x_{k}=L_{{n}_{k}} \setminus L_{{n}_{k-1}} \textrm{ , } \forall {k} $$ and the sequence $ \lbrace n_{n} \rbrace_{k} \subseteq \mathbb{N}$ is increasing.  Thus  $\lbrace  x_{k} \rbrace \subseteq \ker (F-K).$ Now, if $\ker (F-K)$ was finitely generated, then by \cite[Lemma 2.3.7]{MT} $\ker (F-K)$ would be an orthogonal direct summand in $H_{\mathcal{A}}.$ Hence, by the proof of  \cite[Theorem 2.7.5]{MT}, there exists an $n \in  \mathbb{N}$ such that ${p_n}_{{\mid}_{\ker (F-K)}}$ is an isomorphism from $\ker (F-K)$ onto some direct summand in $L_{n}$   (where $p_{n}$ is the orthogonal projection onto $L_{n}$ along $L_{n}^{\bot}).$ 
	In particular ${p_n}_{{\mid}_{\ker (F-K)}}$ is injective. However since, the sequence  $ \lbrace n_{k} \rbrace_{k} $ is increasing, we can find an $ n_{k_{0}} $  such that $n_{k}  \geq n  $ for all $ k  \geq k_{0} .$  Now, by construction, $x_{k} \in L_{{n}_{k}} \setminus L_{{n}_{k-1}}$ for all $k,$ so $x_{k} \in L_{n}^{\bot}$ for all ${k \geq k_{0}},$ as $n_{k}>n $ for all $ k > k_{0} .$ Consequently $p_n(x_{k})=0$ for all ${k \geq k_{0}}.$ As $ \lbrace x_{k} \rbrace_{k \geq k_{0}} \subseteq \ker (F-K),$ we get that $p_{n}$ is not injective, which is a contradiction. Thus we must have that  $\ker (F-K)$ is not finitely generated. On the other hand, if $ F \in  {\mathcal{M}\Phi} _{+}(H_{\mathcal{A}}),$ then 
	$$(F+K) \in  {\mathcal{M}\Phi} _{+}(H_{\mathcal{A}}) \textrm{ , } \forall K \in K^{*}(H_{\mathcal{A}}).$$ Now, as $\mathcal{A}$ is a $W^{*}$-algebra by assumption, then $\ker  (F-K)$ must be finitely generated for all $ K \in K(H_{\mathcal{A}}),$ as 
	$$(F-K) \in  {\mathcal{M}\Phi} _{+}(H_{\mathcal{A}}) \mbox{ for all } K \in K^{*}(H_{\mathcal{A}}),$$ 
	which holds by the same arguments as in the proof of \cite[Lemma 2.7.13]{MT}. This follows from the Proposition \ref{P01}.
\end{proof}
\begin{corollary}  \label{C03} 
	Let $\mathcal{A}$ be a  $W^{*}$-algebra and $ F \in B^{a}(H_{\mathcal{A}}).$ Then $ F \in  {\mathcal{M}\Phi} _{-}(H_{\mathcal{A}})$ if and only if $Im(F-K^{*})^{\bot}$ is finitely generated for all $ K^{*} \in K^{*}(H_{\mathcal{A}}).$
\end{corollary}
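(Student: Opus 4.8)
The plan is to obtain this as an immediate consequence of Theorem \ref{T01} by passing to adjoints. The bridge is the duality between upper and lower semi-$\mathcal{A}$-Fredholm operators recorded in \cite[Corollary 2.11]{I}, namely that $F \in \mathcal{M}\Phi_{-}(H_{\mathcal{A}})$ if and only if $F^{*} \in \mathcal{M}\Phi_{+}(H_{\mathcal{A}})$ (this is the same tool already used in part 2 of Corollary \ref{C04}). Since $F \in B^{a}(H_{\mathcal{A}})$, the adjoint $F^{*}$ is available, so Theorem \ref{T01} may be applied to it.

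First I would record the elementary identity that for every adjointable operator $T \in B^{a}(H_{\mathcal{A}})$ one has $(Im\,T)^{\bot} = \ker T^{*}$, valid with no closed-range hypothesis: indeed $x \perp Im\,T$ means $\langle Tx', x \rangle = 0$ for all $x' \in H_{\mathcal{A}}$, which by adjointability reads $\langle x', T^{*}x \rangle = 0$ for all $x'$, i.e. $T^{*}x = 0$. Applying this with $T = F - K^{*}$ for $K^{*} \in K^{*}(H_{\mathcal{A}})$ gives
$$ Im(F - K^{*})^{\bot} = \ker (F - K^{*})^{*} = \ker\bigl(F^{*} - (K^{*})^{*}\bigr). $$
Because $K^{*}(H_{\mathcal{A}})$ is closed under taking adjoints, as $K^{*}$ ranges over $K^{*}(H_{\mathcal{A}})$ its adjoint $(K^{*})^{*}$ ranges over all of $K^{*}(H_{\mathcal{A}})$ as well. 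Hence the assertion ``$Im(F - K^{*})^{\bot}$ is finitely generated for all $K^{*} \in K^{*}(H_{\mathcal{A}})$'' is equivalent to ``$\ker(F^{*} - K)$ is finitely generated for all $K \in K^{*}(H_{\mathcal{A}})$''.

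Now I would invoke Theorem \ref{T01} for the operator $F^{*}$: the latter condition is precisely the statement that $F^{*} \in \mathcal{M}\Phi_{+}(H_{\mathcal{A}})$, and by \cite[Corollary 2.11]{I} this is in turn equivalent to $F \in \mathcal{M}\Phi_{-}(H_{\mathcal{A}})$. Chaining these equivalences yields the corollary. I do not expect a genuine analytic obstacle here, since all the real work resides in Theorem \ref{T01} and in the adjoint duality; the only point demanding care is the bookkeeping above, namely correctly identifying $Im(F-K^{*})^{\bot}$ with $\ker(F^{*}-K)$ and verifying that quantifying over all compact $K^{*}$ is the same as quantifying over all compact $K$, which is exactly where the self-adjointness of the ideal $K^{*}(H_{\mathcal{A}})$ enters.
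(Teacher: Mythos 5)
Your proposal is correct and follows essentially the same route as the paper: pass to $F^{*}$ via \cite[Corollary 2.11]{I}, identify $Im(F-K^{*})^{\bot}$ with the kernel of the adjoint, and apply Theorem \ref{T01}. You are in fact slightly more careful than the paper's own write-up about the bookkeeping point that quantifying over $K^{*}$ versus its adjoint is harmless because the ideal $K^{*}(H_{\mathcal{A}})$ is self-adjoint.
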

\begin{proof}
	Suppose that $ F \notin  {\mathcal{M}\Phi} _{-}(H_{\mathcal{A}}).$ By \cite[Corollary 2.11]{I}, then $ F^{*} \notin  {\mathcal{M}\Phi} _{+}(H_{\mathcal{A}}).$ Hence there exists some $ K^{*} \in K^{*}(H_{\mathcal{A}})$ such that $\ker (F^{*}-K^{*})$ is not finitely generated. But $\ker (F^{*}-K^{*})=Im(F-K^{*})^{\bot}.$ On the other hand, if $ F \in  {\mathcal{M}\Phi} _{-}(H_{\mathcal{A}}),$ then by \\
	\cite[Corolary 2.11]{I}    $ F^{*} \in  {\mathcal{M}\Phi} _{+}(H_{\mathcal{A}}).$ Hence, by the Theorem \ref{T01} $\ker (F^{*}-K^{*})$ is finitely generated  for all $K^{*} \in K^{*}(H_{\mathcal{A}}),$  so $Im(F-K^{*})^{\bot}$ is finitely generated for all\\
	$ K^{*} \in K^{*}(H_{\mathcal{A}}).$
\end{proof}
Next we define another class of operators on $l_{2}(\mathcal{A}) .$
\begin{definition} \label{D03} 
	Let $ F \in B(l_{2}(\mathcal{A})) .$ We say that $ F \in \widehat{{\mathcal{M}\Phi}}_{+}(l_{2}(\mathcal{A}))  $ if there exist a closed submodule $M$ and a finitely generated submodule $N$ s.t. $l_{2}(\mathcal{A})=M \tilde{\oplus} N  $ and $F_{\mid_{M}}  $ is bounded below.
\end{definition}
Note that we do not assume that $F(M)$ is complementable in $l_{2}(\mathcal{A})  .$ \\
Thus $\widehat{{\mathcal{M}\Phi}}_{l}(l_{2}(\mathcal{A})) \subseteq \widehat{{\mathcal{M}\Phi}}_{+} (l_{2}(\mathcal{A})),$ but the equality does not necessarily hold.
\begin{lemma} \label{L05} 
	Let $F \in  B(l_{2}(\mathcal{A})) .$ Then $F \in \widehat{{\mathcal{M}\Phi}}_{+}(l_{2}(\mathcal{A})) $ iff $\ker (F-K)  $ is finitely generated for all $K^{*} (H_\mathcal{A}) .$
\end{lemma}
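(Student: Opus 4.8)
The proof will run parallel to that of Theorem \ref{T01}, the two novelties being that $F$ need not be adjointable and that we work with the larger class $\widehat{{\mathcal{M}\Phi}}_+(l_2(\mathcal{A}))$. I read the right-hand condition as ``$\ker(F-K)$ is finitely generated for all $K \in K^{*}(H_{\mathcal{A}})$''.

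First I would treat sufficiency by its contrapositive: assuming $F \notin \widehat{{\mathcal{M}\Phi}}_+(l_2(\mathcal{A}))$, I produce a single $K \in K^{*}(H_{\mathcal{A}})$ with $\ker(F-K)$ not finitely generated. The hypothesis of \cite[Lemma 3.2]{I} is precisely that $F$ fails to be bounded below on every closed submodule with finitely generated complement, i.e.\ that $F \notin \widehat{{\mathcal{M}\Phi}}_+(l_2(\mathcal{A}))$, and its proof does not use adjointability of $F$; it therefore supplies an increasing sequence $\{n_k\} \subseteq \mathbb{N}$ and vectors $x_k \in L_{n_k} \setminus L_{n_{k-1}}$ with $\|x_k\| \le 1$ and $\|Fx_k\| \le 2^{1-2k}$. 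Exactly as in Theorem \ref{T01} I set $Kx = \sum_{k=1}^{\infty} \langle x_k, x \rangle Fx_k$; the partial sums are finite-rank, hence lie in $K^{*}(H_{\mathcal{A}})$, and the same telescoping estimate gives norm-convergence, so $K \in K^{*}(H_{\mathcal{A}})$ even though $F$ is only bounded. By construction $\{x_k\}$ is orthonormal and contained in $\ker(F-K)$, and it escapes every $L_n$; the projection argument of Theorem \ref{T01} (via \cite[Lemma 2.3.7]{MT} and the proof of \cite[Theorem 2.7.5]{MT}) then rules out $\ker(F-K)$ being finitely generated.

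For necessity I would show that $F \in \widehat{{\mathcal{M}\Phi}}_+(l_2(\mathcal{A}))$ forces $\ker(F-K)$ to be finitely generated for every $K \in K^{*}(H_{\mathcal{A}})$, in two steps. Step one is perturbation stability: writing the defining decomposition $l_2(\mathcal{A}) = M \tilde\oplus N$ with $N$ finitely generated and $F_{\mid_M}$ bounded below with constant $c$, I approximate $K = K_0 + R$ with $K_0$ of finite rank, $K_0 = \sum_{i=1}^{r} \langle z_i, \cdot\rangle y_i$, and $\|R\| < c/2$, and pass to the submodule $M \cap \{z_1, \dots, z_r\}^{\perp}$. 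Over a $W^{*}$-algebra the adjointable evaluation $x \mapsto (\langle z_i, x\rangle)_i \in \mathcal{A}^{r}$ has complementable kernel with finitely generated complement, so this submodule still has finitely generated complement, and on it $\|(F-K)x\| \ge (c/2)\|x\|$. Hence $F - K \in \widehat{{\mathcal{M}\Phi}}_+(l_2(\mathcal{A}))$, and it suffices to prove that an arbitrary $G \in \widehat{{\mathcal{M}\Phi}}_+(l_2(\mathcal{A}))$ has finitely generated kernel.

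Step two is this kernel statement, which I expect to be the main obstacle. Let $P, Q$ be the bounded projections of $l_2(\mathcal{A}) = M \tilde\oplus N$ onto $M$ and $N$. For $x \in \ker G$ one has $GPx = -GQx$, whence $c\|Px\| \le \|G\|\,\|Qx\|$ and therefore $Q_{\mid_{\ker G}}$ is bounded below; thus $\ker G$ is isomorphic to the closed submodule $R := Q(\ker G) = \{n \in N : Gn \in G(M)\}$ of the finitely generated module $N$. One cannot conclude directly that $R$ is finitely generated, since over a general $W^{*}$-algebra a closed submodule of a finitely generated module need not be finitely generated (for instance $c_0 \subseteq \ell^{\infty}$). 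The point is to exploit the special form of $R$: since $N$ is finitely generated it is self-dual over the $W^{*}$-algebra, so $G_{\mid_N}$ is adjointable, and $R$ is the kernel of the adjointable composition $N \to G(M)^{\perp}$, provided I first check that $G(M)$ is orthogonally complementable (its closedness comes from $G_{\mid_M}$ being bounded below) and that this induced map has closed range. Granting these, \cite[Theorem 2.3.3]{MT} makes $R$ orthogonally complementable in $N$, hence a direct summand of a finitely generated projective module and so finitely generated by \cite[Corollary 3.6.4]{MT}. The delicate points — complementability of $G(M)$ for a possibly non-adjointable $G$, and closedness of the induced range — are exactly where the $W^{*}$-hypothesis is indispensable, and this is where I would concentrate the effort.
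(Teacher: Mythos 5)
Your contrapositive direction coincides with the paper's: the same sequence supplied by \cite[Lemma 3.2]{I}, the same operator $K=\sum_{k}\langle x_{k},\cdot\rangle Fx_{k}$ (adjointable and compact as a norm limit of finite-rank adjointable operators), and the same projection argument from the proof of Theorem \ref{T01} to show $\ker(F-K)$ is not finitely generated; the paper likewise only remarks that the construction does not use adjointability of $F$. The forward direction is where you depart from the paper, and where your argument has a genuine gap. The paper does not split $K$ into finite rank plus small. It stays inside the standard filtration: since $N$ is finitely generated one may choose $n$ with $l_{2}(\mathcal{A})=L_{n}^{\perp}\tilde{\oplus}P\tilde{\oplus}N$ for some finitely generated $P$, so that $F$ is bounded below by some $C>0$ on $L_{n}^{\perp}\oplus P$, and then \cite[Proposition 2.1.1]{MT} produces $m\geq n$ with $\parallel K_{\mid_{L_{m}^{\perp}}}\parallel<C$, whence $F-K$ is bounded below on $L_{m}^{\perp}$, whose complement $L_{m}$ is free, finitely generated and orthogonal. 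Your Step one instead needs the unproved claim that $M\cap\lbrace z_{1},\dots,z_{r}\rbrace^{\perp}$ has finitely generated complement in $l_{2}(\mathcal{A})$. This is not automatic: you would have to show both that the closure of $\mathrm{span}_{\mathcal{A}}\lbrace z_{1},\dots,z_{r}\rbrace$ is orthogonally complementable with $\lbrace z_{1},\dots,z_{r}\rbrace^{\perp}$ as its complement, and that the intersection of two submodules, each with finitely generated complement, again has finitely generated complement — which is essentially the same difficulty you flag in Step two, now imported into Step one. The filtration argument removes this entirely.

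Your Step two does isolate a real issue (a closed submodule of a finitely generated module need not be finitely generated — your $c_{0}\subseteq\ell^{\infty}$ example is apt for the algebraic notion of finite generation used here), and the paper's own proof is terse at exactly this point: it stops at ``$F-K$ is bounded below on $L_{m}^{\perp}$'' and leaves the passage to finite generation of $\ker(F-K)$ implicit. But the paper's decomposition makes that passage easier than yours: for $x\in\ker(F-K)$ one has $C\parallel x-p_{m}x\parallel\leq\parallel(F-K)(x-p_{m}x)\parallel=\parallel(F-K)p_{m}x\parallel\leq\parallel F-K\parallel\,\parallel p_{m}x\parallel$, so the orthogonal projection $p_{m}$ is bounded below on $\ker(F-K)$ and identifies it with a closed submodule of the free, self-dual module $L_{m}$, where the machinery of \cite[Section 3.6]{MT} is available. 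Your endpoint is the submodule $R=\lbrace n\in N:Gn\in G(M)\rbrace$, and realizing it as the kernel of an adjointable map forces you to orthogonally complement $G(M)$ for a non-adjointable $G$ — precisely the step you acknowledge you cannot yet supply, and for which I see no general argument. So as written the necessity half of your proof is incomplete at two points, one of which the paper's route avoids altogether.
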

\begin{proof}
	Let $l_{2}(\mathcal{A})  $ be an $\widehat{{\mathcal{M}\Phi}}_{+} $ decomposition for $F$ from the definition above. Since $N$ is finitely generated, we may choose an $ n \in N $ s.t. $l_{2}(\mathcal{A})=L_{n}^{\perp} \tilde{\oplus} P  \tilde{\oplus} N   $ for some finitely generated, closed submodule $P.$ On $ L_{n}^{\perp} \tilde{\oplus} P $ $F$ is bounded below, so there exists some $ C>0  $ s.t. $\| Fx \| \geq C\|x\| $ for all $x \in L_{n}^{\perp} \oplus P .$ Next, if $K \in K^{*} (H_{\mathcal{A}})  ,$ by Proposition 2.1.1 \cite{MT} there exists some $m \geq n  $ s.t. $ \| K_{\mid_{L_{m}^{\perp}}} \| < C .$ Then $F-K$ is bounded below on $ L_{m}^{\perp} .$ Conversely, if $ F \notin \widehat{{\mathcal{M}\Phi}}_{+}(l_{2}(\mathcal{A})) ,$ then, in particular, $F$ is not bounded below on $L_{n}^{\perp}  $ for all $n  .$ We may hence repeat the construction from \cite[Lemma 3.2]{I} to get the sequence $\lbrace x_{k} \rbrace_{k}  $ s.t. the proof of Theorem \ref{T01} applies. The operator $K$ from this proof is adjointable and compact being the limit in operator norm of operators from $K^{*} (H_{\mathcal{A}})  .$ Moreover, $\ker (F-K)  $ is not finitely generated. 
\end{proof}
Set $\widehat{{\mathcal{M}\Phi}}_{-}(l_{2}(\mathcal{A}))=\lbrace G \in B(l_{2}(\mathcal{A}) \mid  $ there exists closed submodules $M,N,M^{\prime}  $ of $l_{2}(\mathcal{A})  $ s.t. $l_{2}(\mathcal{A})=M \tilde{\oplus} N,$ $N$ is finitely generated and $G_{\mid_{M^{\prime}}}  ,$ is an isomorphism onto $M \rbrace  .$
\begin{remark} \label{R09} 
	We do not require that $M^{\prime}  $ is complementable in $l_{2}(\mathcal{A})  .$ Hence we have only the inclusion $\widehat{{\mathcal{M}\Phi}}_{r}(l_{2}(\mathcal{A})) \subseteq \widehat{{\mathcal{M}\Phi}}_{-}(l_{2}(\mathcal{A}))  ,$ but not necessarily the equality.
\end{remark}
\begin{proposition} \label{P10} 
	Let $G \in \widehat{{\mathcal{M}\Phi}}_{-}(l_{2}(\mathcal{A}))  .$ 
	Then for every $ K \in K(l_{2}(\mathcal{A}))$ there exists an inner product equivalent to the initial one such that the orthogonal complement of $\overline{Im (G+K)}$ w.r.t this new inner product is finitely generated.
\end{proposition}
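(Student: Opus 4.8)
The plan is to reduce the perturbed operator $G+K$, on the ``isomorphism part'' $M^{\prime}$, to an operator of the form ``identity plus compact'', whose image is fully understood over a $W^{*}$-algebra, and then to transport that information back to $\overline{Im(G+K)}$ modulo finitely generated submodules, changing the inner product only at the very end. First I would fix the $\widehat{{\mathcal{M}\Phi}}_{-}$ decomposition $l_{2}(\mathcal{A})=M \tilde\oplus N$ with $N$ finitely generated and $G_{\mid_{M^{\prime}}}\colon M^{\prime}\to M$ an isomorphism, set $S=(G_{\mid_{M^{\prime}}})^{-1}$, and let $P_{M}$ be the bounded idempotent projecting onto $M$ along $N$. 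For $x\in M^{\prime}$ one has $x=SGx$, hence $(G+K)x=Gx+KSGx=(I+KSP_{M})(Gx)$; since $Gx$ runs over all of $M$, this gives $(G+K)(M^{\prime})=\Phi(M)$, where $\Phi:=I+KSP_{M}$ and $KSP_{M}\in K(l_{2}(\mathcal{A}))$. Because $P_{M}$ annihilates $N$ we get $\Phi(N)=N$, so that $Im\,\Phi=\Phi(M)+N$ while $Im(G+K)=\Phi(M)+(G+K)(N)$, the two images differing only in the finitely generated summands $N$ and $(G+K)(N)$.

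The key step is to control $\overline{\Phi(M)}$. Here $\Phi=I+(\text{compact})$ is $\mathcal{A}$-Fredholm (invertible modulo $K(l_{2}(\mathcal{A}))$, as in Remark \ref{R08}), and $P_{M}$ is itself $\mathcal{A}$-Fredholm, being idempotent with $\ker P_{M}=N$ and cokernel isomorphic to $N$, both finitely generated, via the decomposition $M\tilde\oplus N \xrightarrow{P_{M}} M\tilde\oplus N$. Hence the composition $\Phi P_{M}$ is $\mathcal{A}$-Fredholm and $\overline{\Phi(M)}=\overline{Im(\Phi P_{M})}$. Since $\mathcal{A}$ is a $W^{*}$-algebra, the closed image of an $\mathcal{A}$-Fredholm operator is complementable with finitely generated complement (by the Mishchenko--Fomenko decomposition together with the fact that finitely generated submodules of finitely generated projective modules over a $W^{*}$-algebra are complementable). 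Thus $l_{2}(\mathcal{A})=\overline{\Phi(M)}\,\tilde\oplus\, Z_{0}$ with $Z_{0}$ finitely generated, and moreover $\overline{\Phi(M)}\subseteq\overline{Im(G+K)}$ because $\Phi(M)=(G+K)(M^{\prime})\subseteq Im(G+K)$.

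Now I would introduce the equivalent inner product. Applying the construction of Lemma \ref{L04}/Corollary \ref{C01} to the topological decomposition $l_{2}(\mathcal{A})=\overline{\Phi(M)}\,\tilde\oplus\, Z_{0}$ yields an equivalent inner product for which these two summands are orthogonal, so that $Z_{0}=\overline{\Phi(M)}^{\perp}$. In this inner product $\overline{\Phi(M)}$ is an orthogonal direct summand contained in $\overline{Im(G+K)}$, so Lemma \ref{L10} gives
$$\overline{Im(G+K)}=\overline{\Phi(M)}\oplus\bigl(\overline{Im(G+K)}\cap Z_{0}\bigr).$$
The submodule $\overline{Im(G+K)}\cap Z_{0}$ is a finitely generated submodule of the finitely generated (projective) module $Z_{0}$, hence over the $W^{*}$-algebra it is orthogonally complementable in $Z_{0}$ with a finitely generated complement $Z_{1}$. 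Therefore $(\overline{Im(G+K)})^{\perp}=Z_{1}$ is finitely generated in this new, equivalent inner product, which is exactly the assertion.

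The hard part will be the middle step rather than the final inner-product adjustment. Since neither $G$ nor $K$ is assumed adjointable and $M^{\prime}$ need not be complementable, one cannot argue about $G+K$ directly and must route everything through the genuinely $\mathcal{A}$-Fredholm operator $\Phi P_{M}$; the delicate points are verifying that complementability with finitely generated complement survives the passage between $Im\,\Phi$, $\Phi(M)$ and $Im(G+K)$, which rests on closedness of the sum of a complementable closed submodule with a finitely generated one, and on the good behaviour (complementability, projectivity) of finitely generated modules over a $W^{*}$-algebra. Establishing that $\overline{\Phi(M)}=\overline{Im(\Phi P_{M})}$ carries a finitely generated complement is the crux from which the rest is essentially bookkeeping.
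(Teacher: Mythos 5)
Your route is genuinely different from the paper's. The paper does not factor the perturbation out of $G$: it chooses $n$ with $l_{2}(\mathcal{A})=L_{n}^{\perp}\oplus P\oplus N$, notes that $\sqcap G_{\mid_{M^{\prime}}}$ is an isomorphism onto $L_{n}^{\perp}\oplus P$, picks $m\geq n$ with $\|q_{m}K\|<\|(\sqcap G_{\mid_{M^{\prime}}})^{-1}\|^{-1}$ so that $q_{m}(G-K)$ is an isomorphism from a suitable $M^{\prime\prime}\subseteq M^{\prime}$ onto $L_{m}^{\perp}$, and then diagonalizes $G-K$ via \cite[Lemma 2.7.10]{MT} to get $\overline{Im(G-K)}=V(L_{m}^{\perp})\,\tilde\oplus\,(L_{m}\cap\overline{Im(G-K)})$ before changing the inner product to make $V(L_{m}^{\perp})$ and $L_{m}$ orthogonal. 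Your identity $(G+K)x=(I+KSP_{M})(Gx)$ for $x\in M^{\prime}$, giving $(G+K)(M^{\prime})=\Phi(M)$ with $\Phi$ a compact perturbation of the identity, replaces the ``cut down to $L_{m}^{\perp}$ where $K$ is small'' step by an appeal to stability of the Fredholm class under compact perturbation and products (Remark \ref{R08}); this is conceptually cleaner and avoids the explicit norm estimates. The closing self-duality bookkeeping is essentially the same in both arguments.

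There is, however, one genuine gap in your middle step: you claim $l_{2}(\mathcal{A})=\overline{\Phi(M)}\,\tilde\oplus\,Z_{0}$ with $Z_{0}$ finitely generated, on the grounds that the closed image of an $\mathcal{A}$-Fredholm operator is complementable with finitely generated complement. That is true when the image is actually closed (Proposition \ref{P01}), but nothing forces $Im(\Phi P_{M})=\Phi(M)$ to be closed here, and the \emph{closure} of the image of a (non-adjointable) $\mathcal{A}$-Fredholm operator $T$ need not be complementable: from an $\widehat{\mathcal{M}\Phi}$-decomposition one only gets $\overline{Im\,T}=M_{2}\,\tilde\oplus\,\overline{T(N_{1})}$ with $\overline{T(N_{1})}$ a closed submodule of the finitely generated module $N_{2}$, and closed submodules of self-dual $W^{*}$-modules satisfy only $Y\subseteq(Y^{\perp})^{\perp}$, not complementability. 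This is precisely why the proposition is stated in terms of an equivalent inner product, and why the paper's remark following the proof treats complementability of $\overline{Im(G-K)}$ as an \emph{extra} hypothesis. The repair uses only your own ingredients: take the Fredholm decomposition $l_{2}(\mathcal{A})=M_{1}\,\tilde\oplus\,N_{1}\stackrel{\Phi P_{M}}{\longrightarrow}M_{2}\,\tilde\oplus\,N_{2}$, so that $M_{2}\subseteq Im(\Phi P_{M})=\Phi(M)\subseteq Im(G+K)$ and $N_{2}$ is finitely generated; change the inner product so that $M_{2}\perp N_{2}$, and run your final paragraph with $M_{2},N_{2}$ in place of $\overline{\Phi(M)},Z_{0}$. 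Two further slips are harmless: the equality $Im(G+K)=\Phi(M)+(G+K)(N)$ is unjustified because $M^{\prime}\,\tilde\oplus\,N$ need not exhaust $l_{2}(\mathcal{A})$, but you only ever use the inclusion $\Phi(M)\subseteq Im(G+K)$; and $\overline{Im(G+K)}\cap Z_{0}$ need not itself be finitely generated, but its orthogonal complement inside $Z_{0}$ is, by \cite[Lemma 3.6.1]{MT}, which is all the conclusion requires.
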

\begin{proof}
	Let $l_{2}(\mathcal{A})= M \tilde{\oplus} N  $ be an $ \widehat{{\mathcal{M}\Phi}}_{-} $ decomposition for $G$ from the definition above, let $M^{\prime} \subseteq l_{2}(\mathcal{A}) $ be s.t. $ G_{\mid_{M^{\prime}}} $ is an isomorphism onto $M$. Since $N$ is finitely generated, there exists an $ n \in N $ s.t. $ l_{2}(\mathcal{A})=L_{n}^{\perp} \oplus P \oplus N $ for some finitely generated submodule $P.$ If we let $\sqcap  $ denote the projection onto $L_{n}^{\perp} \oplus P  $ along $N,$ it follows that $ \sqcap_{\mid_{M}} $ is an isomorphism onto $L_{n}^{\perp} \oplus P  .$ Hence $\sqcap G_{\mid{M^{\prime}}}  ,$ is an isomorphism from $ M^{\prime}  $ onto $L_{n}^{\perp} \oplus P .$ If $K \in K (l_{2}(\mathcal{A}) ) ,$ then there exist an $ m \geq n $ s.t. $  \| q_{m}K \| < \| (\sqcap G_{\mid{M^{\prime}}})^{-1} \|^{-1} .$ Let $ M^{\prime \prime} =(\sqcap G_{\mid{M^{\prime}}})^{-1} (L_{m}^{\perp}).$ Then $\sqcap G_{\mid{M^{\prime \prime}}}=q_{m}G_{\mid{M^{\prime \prime}}}  $ and moreover $q_{m}(G-K)_{\mid{M^{\prime \prime}}} $ is an isomorphism onto $ L_{m}^{\perp}  .$ Now, $M^{\prime}= M^{\prime \prime} \tilde{\oplus} N^{\prime \prime  } $	where $N^{\prime \prime}= (\sqcap G_{\mid{M^{\prime}}})^{-1} (P \oplus (L_{m} \setminus L_{n})) .$ W.r.t. the decomposition 
	$$M^{\prime}=M^{\prime \prime} \tilde{\oplus} N^{\prime \prime} \stackrel{G-K}{\longrightarrow} L_{m}^{\perp} \oplus L_{m} = l_{2}(\mathcal{A}),$$ 
	$ G-K$ has the matrix
	$\left\lbrack
	\begin{array}{ll}
	(G-K)_{1} & (G-K)_{2} \\
	(G-K)_{3} & (G-K)_{4} \\
	\end{array}
	\right \rbrack,
	$  
	where $(G-K)_{1}=q_{m}(G-K)_{\mid_{M^{\prime \prime}}}  $ is an isomorphism. Hence, by the same arguments as in the proof of \cite[Lemma 2.7.10]{MT} there exists an isomorphism $U:M^{\prime} \longrightarrow M^{\prime } $ s.t. $V:l_{2}(\mathcal{A}) \longrightarrow  l_{2}(\mathcal{A}) $ s.t. $ G-K $ has the matrix 
	$\left\lbrack
	\begin{array}{ll}
	\overbrace{(G-K)_{1}} & 0 \\
	0 & \overbrace{(G-K)_{4}} \\
	\end{array}
	\right \rbrack,
	$  
	w.r.t. the decomposition 
	$$M^{\prime}=U(M^{\prime \prime}) \tilde{\oplus} U(N^{\prime \prime}) \stackrel{G-K}{\longrightarrow} V^{m} (L_{m}^{\perp}) \tilde{\oplus} V (L_{m})=l_{2}(\mathcal{A}) ,$$ where $\overbrace{(G-K)_{1}}  $ is an isomorphism. Moreover $V$ is s.t. $V (L_{m})=L_{m}  $ by construction  from the proof of \cite[Lemma 2.7.10]{MT}. Since $V (L_{m}^{\perp}) \subseteq  Im(G-K) \subseteq \overline{Im(G-K)}$ and $l_{2}(\mathcal{A})= V (L_{m}^{\perp}) \tilde{\oplus} L_{m} ,$ we obtain that $\overline{Im(G-K)}=V (L_{m}^{\perp})  \tilde{\oplus} (L_{m} \cap \overline{Im(G-K)})   .$ On $l_{2}(\mathcal{A})  $ we may replace the inner product by an equivalent one, so that $ V (L_{m}^{\perp})  $ and $L_{m}  $ form an orthogonal direct sum with respect to this new inner product. Since $L_{m}  $ is finitely generated and $L_{m} \cap \overline{Im(G-K)}  $ is a closed submodule of $L_{m}   ,$ by \cite[Lemma 3.6.1]{MT} we obtain that $$L_{m}={(L_{m} \cap \overline{Im(G-K)})^{\perp}}^{\perp}  \oplus (L_{m} \cap \overline{Im(G-K)})^{\perp} .$$ It follows then that $(L_{m} \cap \overline{Im(G-K)})^{\perp}  $ is finitely generated. Since $\overline{Im(G-K)}= V (L_{m}^{\perp}) \oplus  (L_{m} \cap \overline{Im(G-K)}),$ we have that ${\overline{Im(G-K)}}^{\perp}  $ is finitely generated.   Here, of course, the orthogonal complement is given w.r.t. the new inner product. 
\end{proof}
Note that from the proof of Proposition \ref{P10} it follows that if $\overline{Im(F-K)}  $ is complementable in $l_{2}(\mathcal{A}) $ (for $F \in  \widehat{{\mathcal{M}\Phi}}_{-} (l_{2}(\mathcal{A})), K \in K(l_{2}(\mathcal{A}))$), then the complement must be finitely generated.
\begin{lemma} \label{L04} 
	Let $D \in B^{a} (H_{\mathcal{A}}) .$ Then $ D \in  {\mathcal{M}\Phi}_{-}(H_{\mathcal{A}}) $ iff there exist closed submodules $M,N$ such that $H_{\mathcal{A}}= M \tilde{\oplus} N,N  $ is finitely generated and $M \subseteq Im D.$	
\end{lemma}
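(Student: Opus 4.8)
Since the statement is an equivalence I would prove the two implications separately; the forward one is pure bookkeeping while the converse carries all of the content. For the forward implication, assume $D\in\mathcal{M}\Phi_-(H_{\mathcal{A}})$ and fix an $\mathcal{M}\Phi_-$-decomposition $H_{\mathcal{A}}=M_1\tilde\oplus N_1\stackrel{D}{\longrightarrow}M_2\tilde\oplus N_2=H_{\mathcal{A}}$ as in Definition \ref{D01}, so that $D$ is diagonal, $D_1\colon M_1\to M_2$ is an isomorphism and $N_2$ is finitely generated. Putting $M:=M_2$ and $N:=N_2$ gives $H_{\mathcal{A}}=M\tilde\oplus N$ with $N$ finitely generated, and since the matrix of $D$ is diagonal we have $M=M_2=D_1(M_1)=D(M_1)\subseteq\operatorname{Im}D$. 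This is all that is needed in this direction.

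For the converse, suppose $H_{\mathcal{A}}=M\tilde\oplus N$ with $N$ finitely generated and $M\subseteq\operatorname{Im}D$. The whole problem is to manufacture the isomorphic block of the decomposition, that is, a closed complementable submodule $M_1$ on which $D$ is bounded below with $D(M_1)=M$. I would begin with the closed submodule $P:=D^{-1}(M)$. Because $M\subseteq\operatorname{Im}D$, the restriction $D|_{P}\colon P\to M$ is surjective with kernel exactly $\ker D$, and the task becomes to split this surjection. Here I would use the $W^{*}$-hypothesis: the kernel of an adjointable operator over a $W^{*}$-algebra is orthogonally complementable in $H_{\mathcal{A}}$ (through \cite[Corollary 3.6.4]{MT} and \cite[Corollary 3.6.7]{MT}), so $H_{\mathcal{A}}=\ker D\oplus(\ker D)^{\perp}$. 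Since $\ker D\subseteq P$, Lemma \ref{L10} then yields $P=\ker D\oplus M_1$ with $M_1:=P\cap(\ker D)^{\perp}$. The restriction $D|_{M_1}$ is injective, its kernel being $\ker D\cap M_1=0$, and onto $M$, since $D(M_1)=D(P)=\operatorname{Im}D\cap M=M$; as a bounded module bijection it is a topological isomorphism onto $M$ by the open mapping theorem.

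It remains to complement $M_1$ and to diagonalize $D$. Let $s:=(D|_{M_1})^{-1}\colon M\to M_1$ and let $\Pi$ be the bounded idempotent projecting $H_{\mathcal{A}}$ onto $M$ along $N$. A direct computation using $Ds=\mathrm{id}_M$ shows that $s\Pi D$ is a bounded idempotent with range $M_1$, so $H_{\mathcal{A}}=M_1\tilde\oplus N_1$ with $N_1:=\ker(s\Pi D)$. With respect to $H_{\mathcal{A}}=M_1\tilde\oplus N_1\to M\tilde\oplus N$ the operator $D$ is upper triangular with the isomorphism $D|_{M_1}$ in the $(1,1)$-slot, the $(2,1)$-entry vanishing because $D(M_1)=M$. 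The remaining off-diagonal entry is removed by the automorphism argument of \cite[Lemma 2.7.10]{MT}, and since $N$ is finitely generated this produces a bona fide $\mathcal{M}\Phi_-$-decomposition, whence $D\in\mathcal{M}\Phi_-(H_{\mathcal{A}})$.

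The single obstacle is the splitting step, namely the complementability of $\ker D$: over an arbitrary $C^{*}$-algebra a closed submodule need not be orthogonally complementable, and without this one cannot detach the isomorphic preimage $M_1$ from $P$. This is precisely where the $W^{*}$-structure enters, and where Lemma \ref{L10} does its work, converting the complementability of $\ker D$ into the splitting $P=\ker D\oplus M_1$. I would also stress that $\operatorname{Im}D$ is not assumed (and need not be) closed, so the argument must keep the closed complementable part $M$ separate from the finitely generated summand $N$, into which all of the possibly non-closed behaviour of $\operatorname{Im}D$ is absorbed.
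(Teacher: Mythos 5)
Your forward implication and the concluding diagonalization are fine, but there is a genuine gap at the one step that carries all the weight: the splitting $P=\ker D\oplus M_{1}$. You justify it by asserting that $\ker D$ is orthogonally complementable in $H_{\mathcal{A}}$ for an arbitrary adjointable $D$ over a $W^{*}$-algebra, citing [MT, Corollary 3.6.4] and [MT, Corollary 3.6.7]. Those results require the relevant module to be self-dual; in this paper they are only ever invoked for finitely generated submodules (which are self-dual), whereas $H_{\mathcal{A}}=l_{2}(\mathcal{A})$ is not self-dual for infinite-dimensional $\mathcal{A}$ and $\ker D$ need not be self-dual or finitely generated. The other available tool, [MT, Theorem 2.3.3], gives complementability of $\ker D$ only under the hypothesis that $Im\,D$ is closed, which is precisely what is not assumed in this lemma. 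So the existence of $M_{1}=D^{-1}(M)\cap(\ker D)^{\perp}$ as a complement of $\ker D$ in $D^{-1}(M)$, on which everything downstream of that point depends, is not established.

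The paper's proof is arranged exactly so as to avoid touching $\ker D$. Since $N$ is finitely generated it is orthogonally complementable by [MT, Lemma 2.3.7]; let $P$ be the orthogonal projection onto $N^{\perp}$. Because $H_{\mathcal{A}}=M\tilde{\oplus}N$ and $M\subseteq Im\,D$, the adjointable operator $PD$ has image exactly $N^{\perp}$, which \emph{is} closed, so [MT, Theorem 2.3.3] applies to $PD$ and yields $H_{\mathcal{A}}=(\ker PD)^{\perp}\oplus\ker PD$. With respect to the decomposition $(\ker PD)^{\perp}\oplus\ker PD\longrightarrow N^{\perp}\oplus N$ the $(1,1)$-corner of $D$ is the isomorphism $PD|_{(\ker PD)^{\perp}}$, and the diagonalization of [MT, Lemma 2.7.10] together with the finite generation of $N$ finishes the argument. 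Note that this route uses nothing special about $W^{*}$-algebras, which is why the introduction lists Lemma \ref{L04} among the statements valid over arbitrary unital $C^{*}$-algebras; your argument, by contrast, hinges on a complementability claim that is neither established by the sources you cite nor actually needed. The cleanest repair is to replace $D^{-1}(M)$ and $\ker D$ throughout by $H_{\mathcal{A}}$ and $\ker PD$.
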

\begin{proof}
	If $D \in B^{a} (H_{\mathcal{A}})  ,$ then such modules clearly exist from ${\mathcal{M}\Phi}_{-} $ decomposition of $D.$ Conversely, if such modules exist for $D \in B^{a} (H_{\mathcal{A}})  ,$ then $N$ is orthogonally complementable in $H_{\mathcal{A}}  $ by \cite[Lemma 2.3.7] {MT}. If $P  $ denotes the orthogonal projection onto $N^{\perp}  ,$ then $ P_{\mid_{M}} $ is an isomorphism onto $ N^{\perp} ,$ as $ M \tilde{\oplus} N= H_{\mathcal{A}} .$ Hence the operator $PD$ is an adjointable operator and $Im PD=N^{\perp}  .$ By  \cite[Theorem 2.3.3]{MT},  $\ker PD   $ is orthogonally complementable in $H_{\mathcal{A}}  .$ W.r.t. the decomposition 
	$$H_{\mathcal{A}}=\ker PD^{\perp} \oplus \ker PD ‎\stackrel{D}{\longrightarrow} N^{\perp} \oplus N =H_{\mathcal{A}},$$ $D$ has the matrix
	$\left\lbrack
	\begin{array}{ll}
	D_{1} & D_{2} \\
	D_{3} & D_{4} \\
	\end{array}
	\right \rbrack,
	$ 
	where $D_{1}=PD_{\mid_{\ker PD^{\perp}}}  $ is an isomorphism. Using the techniques of diagonalization as in the proof of \cite[Lemma 2.7.10]{MT} and the fact that $N$ is finitely generated, we obtain that $D \in  {\mathcal{M}\Phi}_{-}(H_{\mathcal{A}})   .$
\end{proof}
\begin{corollary} \label{C01} 
	$\widehat{{\mathcal{M}\Phi}}_{-}(l_{2}(\mathcal{A}))  \cap B^{a}(H_{\mathcal{A}})={\mathcal{M}\Phi}_{-}(H_{\mathcal{A}}) .$
\end{corollary}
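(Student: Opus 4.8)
The plan is to establish the two inclusions separately, with only one of them carrying any real content.

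For the inclusion ${\mathcal{M}\Phi}_{-}(H_{\mathcal{A}}) \subseteq \widehat{{\mathcal{M}\Phi}}_{-}(l_{2}(\mathcal{A})) \cap B^{a}(H_{\mathcal{A}})$, I would argue directly from Definition \ref{D01}. If $D \in {\mathcal{M}\Phi}_{-}(H_{\mathcal{A}})$ then $D$ is adjointable, and there is a decomposition $H_{\mathcal{A}} = M_{1} \tilde{\oplus} N_{1} \to M_{2} \tilde{\oplus} N_{2} = H_{\mathcal{A}}$ with matrix $\mathrm{diag}(F_{1},F_{4})$, where $F_{1}$ is an isomorphism of $M_{1}$ onto $M_{2}$ and $N_{2}$ is finitely generated. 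Taking $M = M_{2}$, $N = N_{2}$ and $M' = M_{1}$ immediately exhibits $D$ as a member of $\widehat{{\mathcal{M}\Phi}}_{-}(l_{2}(\mathcal{A}))$, since $l_{2}(\mathcal{A}) = M \tilde{\oplus} N$ with $N$ finitely generated and $D_{\mid_{M'}} = F_{1}$ an isomorphism onto $M$. Equivalently, this inclusion is a consequence of the identity ${\mathcal{M}\Phi}_{-}(H_{\mathcal{A}}) = \widehat{{\mathcal{M}\Phi}}_{r}(l_{2}(\mathcal{A})) \cap B^{a}(H_{\mathcal{A}})$ from the Preliminaries together with the containment $\widehat{{\mathcal{M}\Phi}}_{r}(l_{2}(\mathcal{A})) \subseteq \widehat{{\mathcal{M}\Phi}}_{-}(l_{2}(\mathcal{A}))$ noted in Remark \ref{R09}.

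For the reverse inclusion I would invoke Lemma \ref{L04}. Let $D \in \widehat{{\mathcal{M}\Phi}}_{-}(l_{2}(\mathcal{A})) \cap B^{a}(H_{\mathcal{A}})$, so $D$ is adjointable and there are closed submodules $M, N, M'$ with $l_{2}(\mathcal{A}) = M \tilde{\oplus} N$, $N$ finitely generated, and $D_{\mid_{M'}}$ an isomorphism onto $M$. The key observation is that $M = D(M') \subseteq Im D$. Thus $M$ and $N$ are closed submodules with $H_{\mathcal{A}} = M \tilde{\oplus} N$, $N$ finitely generated, and $M \subseteq Im D$, which are exactly the hypotheses of Lemma \ref{L04}. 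Hence $D \in {\mathcal{M}\Phi}_{-}(H_{\mathcal{A}})$, completing the proof.

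The main obstacle, and indeed the whole point of the corollary, lies in this second inclusion, and it is precisely what Lemma \ref{L04} is designed to overcome. Membership in $\widehat{{\mathcal{M}\Phi}}_{-}(l_{2}(\mathcal{A}))$ does not, by Remark \ref{R09}, provide a complementable module $M'$, so one cannot directly read off a lower semi-$\mathcal{A}$-Fredholm decomposition from the definition. The content is that once $D$ is known to be adjointable, the mere containment $M \subseteq Im D$ (with $M$ a direct summand having finitely generated complement) can be upgraded to a genuine ${\mathcal{M}\Phi}_{-}$ decomposition; this is where adjointability is used essentially, via the orthogonal complementability of $\ker PD$ in the diagonalization argument of Lemma \ref{L04}. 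I expect no further difficulty, as the remaining steps amount only to the bookkeeping of extracting and relabelling the submodules $M$, $N$, $M'$.
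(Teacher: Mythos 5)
Your proof is correct and follows exactly the route the paper intends: the corollary is stated immediately after Lemma \ref{L04} without proof precisely because the nontrivial inclusion is the observation that $M = D(M') \subseteq Im\, D$ puts one in the hypotheses of that lemma, while the easy inclusion follows from Remark \ref{R09} and the identity ${\mathcal{M}\Phi}_{-}(H_{\mathcal{A}}) = \widehat{{\mathcal{M}\Phi}}_{r}(l_{2}(\mathcal{A})) \cap B^{a}(H_{\mathcal{A}})$. Your remarks on where adjointability enters are also accurate.
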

\begin{lemma} \label{L11} 
	Let $ F,D \in B^{a}(H_{\mathcal{A}})   $ and suppose that $ Im F, Im D$ are closed. If $ ImF+ \ker D$ is closed, then $ImF+\ker D $ is orthogonally  complementable. 
\end{lemma}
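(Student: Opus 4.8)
The plan is to exhibit $Im\,F + \ker D$ as the range of a single bounded adjointable operator with closed range, after which orthogonal complementability is immediate from \cite[Theorem 2.3.3]{MT}. First I would observe that, since $Im\,D$ is closed, \cite[Theorem 2.3.3]{MT} applied to $D$ guarantees that $\ker D$ is orthogonally complementable in $H_{\mathcal{A}}$; let $Q$ denote the orthogonal projection of $H_{\mathcal{A}}$ onto $\ker D$. Being an orthogonal projection onto a complementable submodule, $Q$ is self-adjoint and hence $Q \in B^{a}(H_{\mathcal{A}})$, with $Im\,Q = \ker D$.

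Next I would form the operator $T \colon H_{\mathcal{A}} \oplus H_{\mathcal{A}} \to H_{\mathcal{A}}$ defined by $T(x,y) = Fx + Qy$. Since $H_{\mathcal{A}} \oplus H_{\mathcal{A}}$ is again a Hilbert $C^{*}$-module and a direct computation gives $\langle T(x,y), z \rangle = \langle (x,y), (F^{*}z, Qz) \rangle$, the operator $T$ is adjointable with $T^{*}z = (F^{*}z, Qz)$. By construction its range is $Im\,T = Im\,F + Im\,Q = Im\,F + \ker D$, which is closed by hypothesis.

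The final step is to apply \cite[Theorem 2.3.3]{MT} once more, this time to $T$: an adjointable operator with closed range has orthogonally complementable range, so $Im\,F + \ker D = Im\,T$ is orthogonally complementable in $H_{\mathcal{A}}$, as desired.

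The argument is short, so there is no serious obstacle; the only points requiring care are the verification that $Q$ is adjointable (which rests on $Im\,D$, and hence $\ker D$, being orthogonally complementable) and the confirmation that the operator $T$ is genuinely adjointable on the direct sum $H_{\mathcal{A}} \oplus H_{\mathcal{A}}$ with range exactly $Im\,F + \ker D$. I would also note that this reasoning uses only the closedness of $Im\,D$ together with the standing hypothesis that $Im\,F + \ker D$ is closed; the closedness of $Im\,F$ itself is not actually needed for this particular conclusion.
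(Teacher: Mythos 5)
Your proof is correct, and it reaches the conclusion by a genuinely shorter route than the paper, although the final punchline is the same in both arguments: exhibit $Im\,F+\ker D$ as the range of a bounded adjointable operator, note that this range is closed by hypothesis, and invoke \cite[Theorem 2.3.3]{MT}. The difference lies in how that operator is built. The paper works entirely inside $H_{\mathcal{A}}$: it first splits $Im\,F+\ker D=Im\,F\oplus M''$ with $M''=(Im\,F+\ker D)\cap Im\,F^{\perp}$, identifies $M''$ as the closed range of $P_{\mid_{\ker D}}$ (with $P$ the projection onto $Im\,F^{\perp}$), extracts from a further application of \cite[Theorem 2.3.3]{MT} the decompositions $Im\,F^{\perp}=M''\oplus N''$ and $\ker D=(\ker D\cap Im\,F)\oplus M'$, and only then assembles the operator $\sqcap_{Im\,F}+J_{M'}Q\sqcap_{M''}$ with range $Im\,F+\ker D$. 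You instead pass to the doubled module and take $T(x,y)=Fx+Qy$ on $H_{\mathcal{A}}\oplus H_{\mathcal{A}}$, whose adjointability is a one-line computation and whose range is visibly $Im\,F+\ker D$; this bypasses all of the intermediate decompositions. Your observation that closedness of $Im\,F$ is not actually needed (only closedness of $Im\,D$, to make $\ker D$ orthogonally complementable so that $Q$ exists and is adjointable) is also correct and is a genuine, if mild, strengthening. What the paper's longer route buys is the explicit internal structure --- the submodules $M'$, $M''$, $N''$ and the splitting $H_{\mathcal{A}}=(Im\,F+\ker D)\,\tilde{\oplus}\,N''$ --- which is reused in the surrounding results on Dixmier angles (Lemma \ref{L07} and Corollary \ref{C02}); your argument proves the stated lemma more economically but does not by itself produce those decompositions.
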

\begin{proof}
	Suppose that $ImF+\ker D $ is closed. Since $Im F \oplus Im F^{\perp}
= H_{\mathcal{A}}$ by \cite[Theorem 2.3.3]{MT}, we have that $Im F + \ker D= Im F \oplus M^{\prime \prime}$, where $ M^{\prime \prime}=(ImF +\ker D) \cap Im F^{\perp},$ as $Im F \subseteq Im F + \ker D.$ Thus follows from Lemma \ref{L10}. Let $P$ denote the orthogonal projection  onto $Im F^{\perp}.$ Then  $ M^{\prime \prime}=P(ImF +\ker D)=P(Im F)+P(\ker D)=P(\ker D).$ This $Im(P_{\mid_{\ker D}})=M^{\prime \prime}.$ Now, since $Im D$ is closed, again by \cite[Theorem 2.3.3]{MT}, $\ker D$ is orthogonally  complementable in  $H_{\mathcal{A}}.$ Hence $P_{\mid_{\ker D}}$ is an adjointable operator from $\ker D$ into $Im F^{\perp}$ and its image is closed. Applying once again \cite[Theorem 2.3.3]{MT} to the operator $P_{\mid_{\ker D}},$ we obtain that $Im F^{\perp} = M^{\prime \prime} \oplus N^{\prime \prime}, \ker D= \ker (P_{\mid_{\ker D}}) \oplus M^{\prime} = (\ker D \cap Im F) \oplus M^{\prime}$ for some closed submodules $N^{\prime \prime},M^{\prime}.$ Then $P_{\mid_{M^{\prime}}}$ is an isomorphism onto $M^{\prime \prime}.$ It follows then that $ H_{\mathcal{A}}= (Im F \oplus N^{\prime \prime}) \tilde{\oplus} M^{\prime}.$ Moreover, since $H_{\mathcal{A}}=(\ker D \cap Im F) \oplus M^{\prime} \oplus \ker D^{\perp} ,$ we have that $\ker D \cap Im F $ is orthogonally complementable in $H_{\mathcal{A}}.$ Hence $ImF= (\ker D \cap Im F) \oplus M,$ where $M=Im F \cap (\ker D \cap Im F)^{\perp}.$ Here again we apply Lemma \ref{L10}. We obtain then that $ H_{\mathcal{A}}=((\ker D \cap Im F) \oplus M \oplus N^{\prime \prime} ) \tilde{\oplus} M^{\prime}= 
((\ker D \cap Im F) \tilde{ \oplus}  M^{\prime} \tilde{\oplus} M) \tilde{ \oplus} N^{\prime \prime}   =(\ker D + Im F) \tilde{ \oplus} N^{\prime \prime}.$ Let $ Q=(P_{\mid_{M^{\prime}}})^{-1}.$ Then $Q$ is a bounded adjointable operator from $M^{\prime \prime}$ onto $M^{\prime}.$ Consider now the operator $ \sqcap_{Im F} + J_{M^{\prime}}Q \sqcap_{M^{\prime \prime}}$ wher $\sqcap_{Im F}, \sqcap_{M^{\prime \prime}}$ denote the orthogonal projections onto $Im F,M^{\prime \prime} ,$ respectively and $J_{M^{\prime}}$ is the inclusion. Since $M^{\prime}$ is orthogonally complementable, $J_{M^{\prime}}$ is adjointable. Hence $\sqcap_{Im F}+ J_{M^{\prime}}Q \sqcap_{M^{\prime \prime}} \in B^{a}(H_{\mathcal{A}}).$ Moreover, $Im(\sqcap_{Im F}+ J_{M^{\prime}}Q \sqcap_{M^{\prime \prime}})=Im F \tilde{ \oplus} M^{\prime}= Im F + \ker D,$ which is closed by assumption. From  \cite[Theorem 2.3.3]{MT}, $Im F +\ker D$ is orthogonally complementable.  
\end{proof}
\begin{corollary} \label{C05} 
	Let $F,D \in B^{a}(H_{\mathcal{A}}) $ and suppose that $Im F, Im D$ are closed. Then $Im DF$ is closed if and only if $Im F+ \ker D$ is orthogonally complementable.
\end{corollary}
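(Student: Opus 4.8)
The plan is to prove the two implications separately, using Lemma \ref{L11} as the bridge between closedness and orthogonal complementability of $Im F + \ker D$, and using the structural decomposition of Lemma \ref{L08} to handle the forward direction. Throughout, I would lean on \cite[Theorem 2.3.3]{MT} to pass between closedness of images and orthogonal complementability of kernels and images.

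For the forward implication, suppose $Im DF$ is closed. Then all three of $Im F$, $Im D$, $Im DF$ are closed, so Lemma \ref{L08} applies and yields closed submodules $W,X$ together with $Im F = W \oplus (\ker D \cap Im F)$ and the decomposition $H_{\mathcal{A}} = W \tilde{\oplus} S(X) \oplus \ker D$, where $S = (D_{\mid_{\ker D^{\perp}}})^{-1}$. I would first observe that $Im F + \ker D = W + \ker D$, since $\ker D \cap Im F \subseteq \ker D$. The threefold topological direct sum exhibits $W \oplus \ker D$ as the complement of $S(X)$, i.e. as the range of the bounded idempotent that projects along $S(X)$; since the range of a bounded idempotent on a Banach space is closed, $Im F + \ker D$ is closed. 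An appeal to Lemma \ref{L11} (whose hypotheses that $Im F, Im D$ are closed and $Im F + \ker D$ is closed are now all met) then upgrades closedness to orthogonal complementability.

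For the converse, suppose $Im F + \ker D$ is orthogonally complementable, hence closed. Let $P$ denote the orthogonal projection onto $\ker D^{\perp}$, which exists because $Im D$ is closed, so that $\ker D$ is orthogonally complementable by \cite[Theorem 2.3.3]{MT}. Since $\ker D \subseteq Im F + \ker D$, Lemma \ref{L10} gives $Im F + \ker D = \ker D \oplus \big((Im F + \ker D) \cap \ker D^{\perp}\big)$, and a short computation (using that $P$ annihilates $\ker D$) identifies the second summand as $P(Im F)$. As this is an orthogonal splitting of the closed module $Im F + \ker D$ in which $\ker D$ is a closed orthogonal summand, its orthogonal complement $P(Im F)$ is closed as well. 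Finally, since $I - P$ maps into $\ker D$, we have $D = DP$ on $H_{\mathcal{A}}$, so $Im DF = D(Im F) = D(P(Im F))$; and since $D_{\mid_{\ker D^{\perp}}}$ is a topological isomorphism onto the closed submodule $Im D$, it carries the closed submodule $P(Im F) \subseteq \ker D^{\perp}$ onto the closed submodule $Im DF$, which is therefore closed.

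The two set-theoretic identifications $Im F + \ker D = W + \ker D$ and $(Im F + \ker D) \cap \ker D^{\perp} = P(Im F)$ are routine. The steps requiring care are the deductions of closedness from the decompositions: in the forward direction, that the $\tilde{\oplus}$-decomposition of Lemma \ref{L08} genuinely forces $Im F + \ker D$ to be a complemented (hence closed) submodule, and in the converse, that the orthogonal complement of a complementable summand inside a closed module is again closed and that the topological isomorphism $D_{\mid_{\ker D^{\perp}}}$ preserves closedness. These rest on the module analogues of orthogonality encoded in \cite[Theorem 2.3.3]{MT} and Lemmas \ref{L10} and \ref{L11}, rather than on any compactness or metric argument, which is exactly where the Hilbert $C^{*}$-module setting differs from the classical Hilbert space proof.
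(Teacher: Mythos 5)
Your proof is correct, but it follows a genuinely different route from the paper's, which is only two sentences long: the paper quotes \cite[Corollary 1]{N} for the equivalence ``$Im\, DF$ is closed if and only if $Im\, F+\ker D$ is closed,'' and then applies Lemma \ref{L11} to upgrade closedness of $Im\, F+\ker D$ to orthogonal complementability (the converse upgrade being immediate, since an orthogonally complementable submodule is closed). You never invoke \cite{N}; instead you re-prove both halves of that equivalence internally. In the forward direction you read off closedness of $Im\, F+\ker D=W\tilde{\oplus}\ker D$ from the threefold decomposition $H_{\mathcal{A}}=W\tilde{\oplus}S(X)\oplus\ker D$ of Lemma \ref{L08}, whose hypotheses are exactly available once $Im\, DF$ is assumed closed, and then, as in the paper, finish with Lemma \ref{L11}. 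In the converse direction you use Lemma \ref{L10} to identify $(Im\, F+\ker D)\cap \ker D^{\perp}=P(Im\, F)$, note that this is closed as an intersection of closed submodules, and transport it through the topological isomorphism $D_{\mid_{\ker D^{\perp}}}\colon \ker D^{\perp}\to Im\, D$ onto $Im\, DF$; together with $D=DP$ this gives closedness of $Im\, DF$. All of these steps are sound, granted Lemma \ref{L08} as stated (in particular that its $\tilde{\oplus}$-decomposition carries bounded projections, so that $W+\ker D$ is the kernel of the projection onto $S(X)$). What the paper's proof buys is brevity, outsourcing the closedness equivalence to Nikaido's result; what yours buys is a self-contained argument that makes the mechanism explicit, at the cost of invoking the heavier structural Lemma \ref{L08} where a citation would suffice.
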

\begin{proof}
	By \cite[Corollary 1]{N}, $Im DF$ is closed if and only if $Im F + \ker D$ is closed. Now use Lemma \ref{L11}. 
\end{proof}
\begin{remark}
	The statement of Corollary \ref{C05} was already proved in \cite{Sh}, however, we have given here another, shorter proof. 
\end{remark}
Recall the definition of the „Dixmier angle“ between two Hilbert $C^{*}$-modules, given in \cite{Sh}.
\begin{definition} \label{D04} 
	Given two closed submodules $M,N$ of $H_{\mathcal{A}},$ we set $$c_{0} (M,N)=\sup \lbrace \parallel <x,y> \parallel \mid x \in M, y \in N, \parallel x \parallel, \parallel y \parallel  \leq 1 \rbrace.$$ 
	We say then that the Dixmier angle between $M$ and $N $ is positive if $ c_{0} (M,N) < 1 .$
\end{definition}
\begin{lemma} \label{L07} 
	Let $M, N$ be two closed, orthogonally complementable submodules of $H_{\mathcal{A}}$ and suppose that $M \cap N=\lbrace 0 \rbrace.$ Then $M+N$ is closed if and only if the Dixmier angle between $M$ and $N$ is positive.
\end{lemma}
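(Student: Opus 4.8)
The plan is to treat the two implications separately, using the orthogonal projections $P_{M}, P_{N}, P_{M^{\perp}}$ that exist because $M$ and $N$ are orthogonally complementable, and to record at the outset the elementary reformulation $\sup_{x\in M,\ \|x\|\le 1}\|\langle x,y\rangle\| = \|P_{M}y\|$ for $y\in N$, which follows from $\langle x,P_{M}y\rangle=\langle x,y\rangle$ for $x\in M$; in particular $\|P_{M}y\|\le c_{0}(M,N)\,\|y\|$ for every $y\in N$. For the implication that a \emph{closed} sum forces a positive angle I will also use that, since $M\cap N=\{0\}$, the addition map $(x,y)\mapsto x+y$ from the complete space $M\times N$ onto $M+N$ is a continuous bijection, so if $M+N$ is closed the Banach open mapping theorem supplies a constant $\delta>0$ with $\|x+y\|\ge\delta\max(\|x\|,\|y\|)$ for all $x\in M$, $y\in N$.

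For the direction \emph{positive angle $\Rightarrow$ closed} I would suppose $c_{0}:=c_{0}(M,N)<1$ and fix $x\in M$, $y\in N$. Writing $P=P_{M^{\perp}}$, the orthogonal decomposition $\langle y,y\rangle=\langle P_{M}y,P_{M}y\rangle+\langle Py,Py\rangle$ together with $\langle P_{M}y,P_{M}y\rangle\le c_{0}^{2}\|y\|^{2}\,1$ gives $\langle Py,Py\rangle\ge\langle y,y\rangle-c_{0}^{2}\|y\|^{2}\,1$. Applying the largest spectral value (which is monotone and equals the norm on positive elements) yields $\|Py\|^{2}\ge(1-c_{0}^{2})\|y\|^{2}$; since $P(x+y)=Py$ and $\|P\|\le 1$, this gives $\|y\|\le(1-c_{0}^{2})^{-1/2}\|x+y\|$ and then $\|x\|\le(1+(1-c_{0}^{2})^{-1/2})\|x+y\|$. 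Consequently, whenever $x_{n}+y_{n}$ converges the sequences $\{x_{n}\},\{y_{n}\}$ are Cauchy, hence converge in the closed submodules $M$ and $N$, and the limit lies in $M+N$; so $M+N$ is closed.

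The reverse implication is the interesting one, and I would argue by contraposition: assuming $c_{0}=1$, I will contradict the open-mapping bound above. Choose $x_{n}\in M$, $y_{n}\in N$ with $\|x_{n}\|=\|y_{n}\|=1$ and $\|a_{n}\|\to 1$, where $a_{n}=\langle x_{n},y_{n}\rangle$. The naive attempt $u_{n}=x_{n}$, $w_{n}=y_{n}a_{n}^{*}$ only yields $\langle u_{n}-w_{n},u_{n}-w_{n}\rangle\le 1-a_{n}a_{n}^{*}$, whose right side need not be small in norm, since $\|a_{n}a_{n}^{*}\|\to 1$ controls only the top of the spectrum of $a_{n}a_{n}^{*}$. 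To repair this I would \emph{localize} via the continuous functional calculus: set $e_{n}=f_{n}(a_{n}a_{n}^{*})$ where $f_{n}$ is a bump equal to $1$ at the top of the spectrum and supported in $[1-\eta_{n},\infty)$ with $\eta_{n}\to 0$, put $c_{n}=e_{n}^{1/2}$, and take $u_{n}=x_{n}c_{n}\in M$, $w_{n}=y_{n}a_{n}^{*}c_{n}\in N$. Since $c_{n}$ commutes with $a_{n}a_{n}^{*}$ and $(1-a_{n}a_{n}^{*})e_{n}\le\eta_{n}e_{n}$, the same expansion now gives $\langle u_{n}-w_{n},u_{n}-w_{n}\rangle\le(1-a_{n}a_{n}^{*})e_{n}\le\eta_{n}\,1$, so $\|u_{n}-w_{n}\|\to 0$; while $\langle u_{n},w_{n}\rangle=a_{n}a_{n}^{*}e_{n}$ has norm at least $(1-\eta_{n})\|e_{n}\|=1-\eta_{n}$, forcing $\max(\|u_{n}\|,\|w_{n}\|)\ge\sqrt{1-\eta_{n}}$. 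Applying the open-mapping estimate to $x=u_{n}$, $y=-w_{n}$ then gives $\|u_{n}-w_{n}\|\ge\delta\sqrt{1-\eta_{n}}$, contradicting $\|u_{n}-w_{n}\|\to 0$.

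The main obstacle is precisely this last localization step. Over a Hilbert space one would absorb a scalar phase to make $\langle x_{n},y_{n}\rangle$ real and close to $1$ and read off $\|x_{n}-y_{n}\|\to 0$ directly; over a general $C^{*}$-algebra $\|\langle x_{n},y_{n}\rangle\|\to 1$ gives no such pair, because norm convergence of the inner products does not control the full positive element $a_{n}a_{n}^{*}$. The spectral cut-off $f_{n}(a_{n}a_{n}^{*})$ concentrates on the part of the spectrum where $a_{n}a_{n}^{*}$ is genuinely near $1$, and since it uses only the continuous functional calculus it works for an arbitrary unital $C^{*}$-algebra, matching the stated generality. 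I would still verify the routine order facts ($\langle x,x\rangle\le\|x\|^{2}1$, conjugation by $c_{n}$ preserving $\le$, monotonicity of the top spectral value) and the extraction of the normalized sequence, but I expect no difficulty there.
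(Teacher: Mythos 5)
Your proof is correct, and for the harder implication it takes a genuinely different route from the paper. For "positive angle $\Rightarrow$ closed" both arguments run along the same lines: decompose $y\in N$ orthogonally relative to $M$, use $c_{0}(M,N)<1$ to bound the $M$-component, deduce that the $M^{\perp}$-component dominates, and conclude that $\|x\|,\|y\|$ are controlled by $\|x+y\|$ so that Cauchy sequences in $M+N$ split; your version, working directly with the identity $\langle y,y\rangle=\langle P_{M}y,P_{M}y\rangle+\langle P_{M^{\perp}}y,P_{M^{\perp}}y\rangle$ and the monotonicity of the top spectral value, is in fact algebraically tighter than the paper's norm manipulations. For "closed $\Rightarrow$ positive angle" the paper argues directly: it uses Lemma \ref{L10} to write $M+N=M\oplus M'$ with $M'=(M+N)\cap M^{\perp}$, observes that $P_{M^{\perp}}|_{N}$ is an isomorphism onto $M'$ hence bounded below by some $c>0$, and estimates $c_{0}(M,N)\leq 1-c$ via $\langle x,y\rangle=\langle x,(I-P_{M^{\perp}})y\rangle$. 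You instead argue by contraposition from the open-mapping bound, and the spectral cut-off $e_{n}=f_{n}(a_{n}a_{n}^{*})$ is exactly the right device to overcome the genuine obstacle that $\|a_{n}\|\to 1$ controls only the top of the spectrum of $a_{n}a_{n}^{*}$: the computation $\langle u_{n}-w_{n},u_{n}-w_{n}\rangle\leq c_{n}(1-a_{n}a_{n}^{*})c_{n}\leq\eta_{n}1$ and the lower bound $\|\langle u_{n},w_{n}\rangle\|\geq(1-\eta_{n})\|e_{n}\|=1-\eta_{n}$ both check out (note $\mathrm{spec}(a_{n}a_{n}^{*})\subseteq[0,1]$, so $1-a_{n}a_{n}^{*}\geq 0$, and $\|a_{n}\|^{2}$ lies in the spectrum so $\|e_{n}\|=1$ once $\eta_{n}>1-\|a_{n}\|^{2}$). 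Your route costs the functional-calculus construction but buys a cleaner quantitative statement and avoids the delicate norm estimate $\|(I-P)y\|\leq\|y\|-\|Py\|$ on which the paper's direct argument leans; both proofs use only that $\mathcal{A}$ is a unital $C^{*}$-algebra, consistent with the stated generality of the lemma.
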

\begin{proof}
	Suppose that the Dixmier angle between $M$ and $N$ is positive. We wish first to show that in this case there exists some constatnt $C>0$ s.t. whenever $x \in M, y \in N$ satisfy $\parallel x+y \parallel \leq 1 ,$ then $ \parallel x \parallel \leq C .$ To show this, observe first that since $M$ is orthogonally complementable in $H_{\mathcal{A}},$ there exist some $y^{\prime} \in M,y^{\prime \prime} \in M^{\perp}   $ s.t. $y=y^{\prime}+y^{\prime \prime}$ for $y \in N .$ Now, let $ c_{0} (M,N)=\delta <1 .$ Then 
	$$\sup  \lbrace \parallel <y,z> \parallel \mid z \in M, \parallel z \parallel = 1 \rbrace = \parallel y^{\prime} \parallel \leq \parallel y \parallel \delta .$$ 
	It follows that 
	$$\parallel y^{\prime \prime}  \parallel =\parallel  y- y^{ \prime}  \parallel \geq \parallel y  \parallel-  \parallel  y^{ \prime}  \parallel \geq (1-\delta) \parallel y \parallel= \dfrac{1-\delta}{\delta} \delta \parallel y \parallel \geq  \dfrac{1-\delta}{\delta} \parallel y^{ \prime}  \parallel   .$$ Now observe that $ <x+y,x+y> = <x+y^{ \prime} ,x+y^{ \prime}> + <y^{\prime \prime},y^{\prime \prime}>.$ By taking supremum over all states on $\mathcal{A}  ,$ we obtain $ \parallel x+y  \parallel \geq \max \lbrace \parallel  x+y^{ \prime} \parallel, \parallel  y^{\prime \prime} \parallel \rbrace .$ Thus, if $\parallel x+y  \parallel  \leq 1,$ then $ \parallel  x+y^{ \prime} \parallel, \parallel  y^{\prime \prime} \parallel \leq 1  .$ But, if $ \parallel  y^{\prime \prime} \parallel \leq 1 ,$ then by the calculation above, we get that $\parallel  y^{\prime } \parallel \leq  \dfrac{\delta}{1-\delta} .$ If in addition $\parallel x+ y^{\prime } \parallel \leq 1 ,$ then $1 \geq \parallel x  \parallel - \parallel  y^{ \prime} \parallel  \geq  \parallel x  \parallel - \dfrac{\delta}{1-\delta}.$ Hence we get $\parallel x  \parallel \leq 1+ \dfrac{\delta}{1-\delta} ,$ so we may set $C=1+ \dfrac{\delta}{1-\delta}.$
	Assume now that $ \lbrace x_{n}+y_{n} \rbrace_{n} $ is a Cauchy sequence in $M+N$ (here $x_{n} \in M,y_{n} \in N  $ for all $n$). From the arguments above we have that $\lbrace x_{n} \rbrace_{n}  $ must be then a Cauchy sequence in $M.$ Since $M$ is closed $ x_{n }\rightarrow x $ for some $ x\in M  .$ But, then $ \lbrace y_{n} \rbrace_{n} $ must be also convergent, so $  y_{n }\rightarrow y $ for some $ y \in N $ since $N$ is closed . Hence $ x_{n}+y_{n}$ converges to $x+y \in M+N$ as $n \rightarrow \infty .$ Thus $M+N$ is closed. Conversely, suppose that $M+N$ is closed. Then $M$ and $N$ form a direct sum, as $M \cap N = \lbrace 0 \rbrace  .$ Since $M$ is orthogonally complementable in $ H_{\mathcal{A}} ,$ we have that $M \tilde{ \oplus} N= M \oplus M^{\prime} $ where $M^{\prime}=(M \tilde{ \oplus} N) \cap M^{\perp}  .$ Here once again we apply Lemma \ref{L10}. If we let $P$ denote the orthogonal projection onto $M^{\perp}  ,$ we get that $ P_{\mid_{N}} $ is an isomorphism onto$M^{\prime}  .$ Hence there exists a constant $c>0  $ s.t. $\parallel Py \parallel \geqslant c \parallel y \parallel  $ for all $y \in N  .$ Since $\parallel P \parallel=1  ,$ we must have $c \leq 1  .$ Then $ \parallel (I-P)y \parallel \leq
	\parallel y \parallel-\parallel Py \parallel \leq (1-c)\parallel y \parallel $ for all $ y \in N .$ Consequently, we get 
	$$c_{0}(M,N)=\sup \lbrace \parallel <x,y> \parallel \mid x \in M, y \in N, \parallel x \parallel,\parallel y \parallel \leq 1  \rbrace=$$
	$$=\sup \lbrace \parallel <x,(I-P)y>\parallel \mid x \in M, y \in N, \parallel x \parallel,\parallel y \parallel \leq 1  \rbrace  \leq $$
	$$\leq \sup \lbrace \parallel x \parallel  (1-c) \parallel y \parallel   \mid x \in M, y \in N, \parallel x \parallel,\parallel y \parallel \leq 1  \rbrace  \leq  (1-c) <1  $$
\end{proof}
\begin{corollary} \label{C02} 
	Let $ F,D \in B^{a} (H_{\mathcal{A}})$ and suppose that $Im F, Im D$ are closed. Set $M=Im F \cap (\ker D \cap Im F)^{\perp},$ $M^{\prime}=\ker D    \cap (\ker D \cap Im F)^{\perp}.$ Then $Im DF$ is closed if and only if the Dixmier angle betwen $M^{\prime}  $ and $Im F,$ or equivalently the Dixmier angle between $M$ and $\ker  D$ is positive.  
\end{corollary}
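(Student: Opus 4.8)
The plan is to reduce the closedness of $Im\,DF$ to the closedness of the sum $Im\,F+\ker D$ via Corollary \ref{C05}, and then to extract the Dixmier-angle condition from Lemma \ref{L07}. Write $K=\ker D\cap Im\,F$, so that $M=Im\,F\cap K^{\perp}$ and $M^{\prime}=\ker D\cap K^{\perp}$ are the orthogonal complements of $K$ inside $Im\,F$ and inside $\ker D$. I would first record that $M\cap\ker D=\{0\}$ and $M^{\prime}\cap Im\,F=\{0\}$, since $M\cap\ker D\subseteq Im\,F\cap\ker D\cap K^{\perp}=K\cap K^{\perp}=\{0\}$ and symmetrically for $M^{\prime}$. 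As $c_{0}$ is symmetric, it then suffices to relate the closedness of the two sums $Im\,F+M^{\prime}$ and $\ker D+M$ to the two angles through Lemma \ref{L07}, and to connect these sums back to $Im\,F+\ker D$.

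For the forward implication, suppose $Im\,DF$ is closed. Then Lemma \ref{L08} applies and yields orthogonal, orthogonally complementable decompositions $Im\,F=M\oplus K$ and $\ker D=M^{\prime}\oplus K$. Since $K\subseteq Im\,F$ and $K\subseteq\ker D$, these give the identities $Im\,F+\ker D=Im\,F+M^{\prime}=\ker D+M$, and the common sum is closed by Corollary \ref{C05}. Now $Im\,F$ and $M^{\prime}$ are orthogonally complementable with $Im\,F\cap M^{\prime}=\{0\}$, so Lemma \ref{L07} forces $c_{0}(M^{\prime},Im\,F)<1$; applying it to $\ker D$ and $M$ gives $c_{0}(M,\ker D)<1$. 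This simultaneously establishes both angle conditions.

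For the converse I would argue from $c_{0}(M^{\prime},Im\,F)<1$. The forward half of the proof of Lemma \ref{L07} decomposes elements of the second module using only the orthogonal complementability of the first; taking the first module to be $Im\,F$ (complementable, since $Im\,F$ is closed), this shows that $Im\,F+M^{\prime}$ is closed. The step I expect to be the main obstacle is to upgrade this to the closedness of the full sum $Im\,F+\ker D$. Letting $P$ be the orthogonal projection onto $Im\,F^{\perp}$, Lemma \ref{L10} gives $Im\,F+\ker D=Im\,F\oplus P(\ker D)$, so one must show $P(\ker D)$ is closed, equivalently that $K=\ker(P_{\mid_{\ker D}})$ is orthogonally complementable in $\ker D$; this is exactly the point where the non-complementability phenomena peculiar to Hilbert $C^{*}$-modules must be excluded. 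The positive angle does supply the lower bound: writing $u=(I-P)u+Pu$ for $u\in M^{\prime}$ one has $\langle u,u\rangle=\langle (I-P)u,(I-P)u\rangle+\langle Pu,Pu\rangle$ with $\lVert (I-P)u\rVert\le c_{0}(M^{\prime},Im\,F)\lVert u\rVert$, whence, testing against states, $\lVert Pu\rVert\ge\sqrt{1-c_{0}(M^{\prime},Im\,F)^{2}}\,\lVert u\rVert$, so $P_{\mid_{M^{\prime}}}$ is bounded below and $P(M^{\prime})$ is closed. The heart of the matter is to promote this estimate from $M^{\prime}$ to all of $\ker D$ modulo $K$, yielding $\ker D=K\oplus M^{\prime}$ and hence $Im\,F+M^{\prime}=Im\,F+\ker D$. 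Once this identification is secured, Corollary \ref{C05} closes the argument, and the symmetric reasoning handles the equivalence with $c_{0}(M,\ker D)<1$.
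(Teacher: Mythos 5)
Your forward implication is correct and follows the route the paper clearly intends: once $Im\, DF$ is closed, Lemma \ref{L08} supplies the orthogonal decompositions $Im\, F=M\oplus K$ and $\ker D=M^{\prime}\oplus K$ with $K=\ker D\cap Im\, F$, these make $M$ and $M^{\prime}$ orthogonally complementable in $H_{\mathcal{A}}$ and give $Im\, F+\ker D=Im\, F\oplus M^{\prime}=\ker D\,\tilde{\oplus}\,M$, the common sum is closed by Corollary \ref{C05} together with \cite[Corollary 1]{N}, and Lemma \ref{L07} then delivers both angle conditions. (The paper prints no proof of Corollary \ref{C02}, but this is visibly the intended combination of Lemma \ref{L08}, Corollary \ref{C05} and Lemma \ref{L07}.)

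The converse half of your proposal, however, is not a proof: you establish only that $Im\, F+M^{\prime}$ is closed, and you explicitly leave open the step that would identify this sum with $Im\, F+\ker D$, namely the identity $\ker D=K\oplus M^{\prime}$. That identity is precisely the orthogonal complementability of $K=\ker D\cap Im\, F$ inside $\ker D$, and it is not available for free at this point: $K$ is the kernel of the adjointable operator $P_{\mid_{\ker D}}$ (equivalently of $D_{\mid_{Im F}}$), and \cite[Theorem 2.3.3]{MT} yields complementability of such a kernel only after the range is known to be closed --- which is exactly the conclusion you are trying to reach. In a Hilbert $C^{*}$-module the intersection of two orthogonally complementable submodules need not be orthogonally complementable, so $K\oplus M^{\prime}$ may a priori be a proper closed submodule of $\ker D$; your lower bound $\lVert Pu\rVert\geq\sqrt{1-c_{0}(M^{\prime},Im F)^{2}}\,\lVert u\rVert$ holds only on $M^{\prime}$ and says nothing about elements of $\ker D$ outside $K\oplus M^{\prime}$, so the argument stalls exactly where you say it does. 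What is missing is an argument that the hypothesis $c_{0}(M^{\prime},Im F)<1$ by itself forces $P_{\mid_{\ker D}}$ (not merely $P_{\mid_{M^{\prime}}}$) to have closed range, equivalently that $1$ is isolated in the spectrum of the positive contraction $\Pi_{\ker D}\Pi_{Im F}{}_{\mid_{\ker D}}$; only then do you get $\ker D=K\oplus M^{\prime}$, hence $Im\, F+M^{\prime}=Im\, F+\ker D$, and only then does Corollary \ref{C05} close the loop. As written, the converse direction is a statement of what remains to be shown rather than a demonstration of it.
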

Next we introduce the following notation: For two closed submodules $N_{1}, N_{2}$ of $M$ we write $N_{1} \preceq N_{2}$ when $N_{1}$ is isomorphic to a closed submodule of $N_{2}.$ 
\begin{proposition} \label{P03} 
	Let $F,G \in  \widehat{{\mathcal{M}\Phi}} _{l}(l_{2}(\mathcal{A}))$ with closed images and suppose that $Im GF$ is closed. Then $Im F, Im G, Im GF $are complementable in $l_{2}(\mathcal{A})  .$ Moreover, if we let $Im F^{0}, Im G^{0}, Im GF^{0}  $ denote the complements of $Im F, Im G, Im GF,$ respectively, then
	$$ImGF^{0} \preceq ImF^{0} \oplus ImG^{0},$$
	$$\ker GF \preceq \ker G \oplus \ker F .$$
	If $ F,G \in \widehat{{\mathcal{M}\Phi}}_{r} (l_{2}(\mathcal{A}))  $ and $Im F, Im G, Im GF$ are closed, then the statement above holds under additional assumption that $Im F, Im G, Im GF$ are complementable in $l_{2}(\mathcal{A})  .$
\end{proposition}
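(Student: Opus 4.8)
The plan is to prove the two displayed inequalities separately, reading each off a short exact sequence that I split by locating a finitely generated summand furnished by the semi-Fredholm hypothesis. First I record the complementability. Since $F,G\in\widehat{{\mathcal{M}\Phi}}_{l}(l_{2}(\mathcal{A}))$ have closed images, Proposition \ref{P01} makes $Im F$ and $Im G$ complementable. By Remark \ref{R08} the class $\widehat{{\mathcal{M}\Phi}}_{l}(l_{2}(\mathcal{A}))$ is closed under multiplication, so $GF\in\widehat{{\mathcal{M}\Phi}}_{l}(l_{2}(\mathcal{A}))$; as $Im GF$ is closed by hypothesis, Proposition \ref{P01} again makes $Im GF$ complementable. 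Crucially, Proposition \ref{P01} also tells me that $\ker F,\ker G$ and $\ker GF$ are finitely generated, which is the extra input over the general adjointable case treated in Lemma \ref{L06}.

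For the kernel bound I use that $F$ maps $\ker GF$ onto $\ker G\cap Im F$ with kernel $\ker F$. As $\ker F$ is finitely generated it is orthogonally complementable by \cite[Lemma 2.3.7]{MT}, so Lemma \ref{L10} gives $\ker GF=\ker F\oplus(\ker GF\cap\ker F^{\perp})$. By the open mapping theorem $F_{\mid_{\ker F^{\perp}}}$ is a topological isomorphism onto $Im F$, hence bounded below, so it carries $\ker GF\cap\ker F^{\perp}$ isomorphically onto the closed submodule $\ker G\cap Im F$ of $\ker G$. Thus $\ker GF\cong\ker F\oplus(\ker G\cap Im F)\preceq\ker F\oplus\ker G$.

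The cokernel bound is the part that needs care. Since $Im GF\subseteq Im G$ and both are complementable in $l_{2}(\mathcal{A})$, restricting to $Im G$ a bounded idempotent of $l_{2}(\mathcal{A})$ with range $Im GF$ shows $Im G=Im GF\oplus X$ for some closed $X$, whence $l_{2}(\mathcal{A})=Im GF\oplus X\oplus Im G^{0}$ exhibits $X\oplus Im G^{0}$ as a complement of $Im GF$ and $Im GF^{0}\cong X\oplus Im G^{0}$. Now $X\cong Im G/Im GF$, and since $\ker G$ is complementable the operator $G$ induces (open mapping theorem) a topological isomorphism $l_{2}(\mathcal{A})/\ker G\cong Im G$ carrying $(Im F+\ker G)/\ker G$ onto $Im GF$; hence $X\cong l_{2}(\mathcal{A})/(Im F+\ker G)$. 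Writing $R$ for the projection of $l_{2}(\mathcal{A})=Im F\oplus Im F^{0}$ onto $Im F^{0}$ along $Im F$, this quotient is isomorphic to $Im F^{0}/R(\ker G)$, where $R(\ker G)=(Im F+\ker G)\cap Im F^{0}$. The submodule $Im F+\ker G$ is closed because $Im GF$ is (\cite{N}), so $R(\ker G)$ is closed, and it is finitely generated, being the image of the finitely generated module $\ker G$. Therefore $R(\ker G)$ is orthogonally complementable by \cite[Lemma 2.3.7]{MT}, and Lemma \ref{L10} yields $Im F^{0}=R(\ker G)\oplus(Im F^{0}\cap R(\ker G)^{\perp})$ with $Im F^{0}\cap R(\ker G)^{\perp}\cong X$. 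Consequently $X\preceq Im F^{0}$ and $Im GF^{0}\cong X\oplus Im G^{0}\preceq Im F^{0}\oplus Im G^{0}$.

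The $\widehat{{\mathcal{M}\Phi}}_{r}$ case follows from the same scheme with Proposition \ref{P04} replacing Proposition \ref{P01}: there it is the complements $Im F^{0},Im G^{0},Im GF^{0}$ (rather than the kernels) that are finitely generated, so the two inequalities change roles, the cokernel bound becoming the elementary one and the kernel bound being obtained by running the argument of the previous paragraph on kernels. The main obstacle throughout is that $F$ and $G$ need not be adjointable, so I cannot manufacture the splittings from adjoints as one does in the purely adjointable Lemma \ref{L06}; what saves the argument is that the semi-Fredholm hypothesis always supplies a finitely generated piece ($\ker G$ above, or a complement in the lower case) which by \cite[Lemma 2.3.7]{MT} is orthogonally complementable, and this is precisely what splits the relevant surjection. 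This is the point at which the $W^{*}$-assumption is used, through Proposition \ref{P01} and Proposition \ref{P04}.
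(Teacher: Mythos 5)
Your treatment of the $\widehat{{\mathcal{M}\Phi}}_{l}$ case is correct, and the cokernel inequality is obtained by a genuinely different route from the paper's. The paper splits $\ker G=(\ker G\cap Im F)\oplus M^{\prime}$ via self-duality of the finitely generated module $\ker G$ and \cite[Corollary 3.6.4]{MT}, then builds the explicit decomposition $l_{2}(\mathcal{A})=M\tilde{\oplus}S(X)\tilde{\oplus}M^{\prime}\tilde{\oplus}(\ker G\cap Im F)$ (with $S=(G_{\mid_{\ker G^{0}}})^{-1}$) and compares complements of $Im F$ to read off $Im F^{0}\cong X\oplus M^{\prime}$. You instead identify $X\cong l_{2}(\mathcal{A})/(Im F+\ker G)\cong Im F^{0}/R(\ker G)$ via the open mapping theorem and Nikaido's criterion, and split off $R(\ker G)$ because it is a closed, finitely generated, hence orthogonally complementable submodule. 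Both arguments hinge on the same input (finite generation of $\ker G$), but yours avoids the chain of $\tilde{\oplus}$-decompositions; the kernel inequality is essentially the paper's (and \cite[Theorem 1.2.4]{ZZRD}'s) argument in both versions.

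The gap is in your last paragraph, on the $\widehat{{\mathcal{M}\Phi}}_{r}$ case: the proposed ``role swap'' does not work. For lower semi-Fredholm operators $\ker G$ need not be finitely generated, so $R(\ker G)$ need not be finitely generated, and your device for splitting it off from $Im F^{0}$ disappears; yet the cokernel bound still requires the non-elementary step $X\preceq Im F^{0}$, since $X=Im G\cap Im GF^{0}$ is in no containment relation with $Im F^{0}$. (It can be repaired: $Im F^{0}$ is now finitely generated, hence self-dual, and $R(\ker G)$ is the kernel of the bounded module map $Im F^{0}\hookrightarrow l_{2}(\mathcal{A})\rightarrow l_{2}(\mathcal{A})/(Im F+\ker G)\cong X$, so \cite[Corollary 3.6.4]{MT} splits it off --- but that is a different mechanism, which you never invoke. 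The paper instead applies Proposition \ref{P04} to $GF$ to get $\ker GF$ complementable, transports this by $F$ to complement $\ker G\cap Im F$, and reruns its decomposition argument.) Symmetrically, ``running the quotient argument on kernels'' cannot yield the kernel bound in the $r$ case, because no finitely generated module appears there; what is actually needed is complementability of $\ker F$ in $l_{2}(\mathcal{A})$, which comes from Proposition \ref{P04} (using that $Im F$ is closed and complementable), after which your original kernel argument goes through with a non-orthogonal idempotent in place of $P_{\ker F}$. As written, the $r$-case paragraph asserts a strategy that fails on both halves.
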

\begin{proof}
	Since $F \in \widehat{{\mathcal{M}\Phi}}_{l}(l_{2}(\mathcal{A})),$ $F$ has the decomposition given in Proposition \ref{P01}. Then $N_{1}^{\prime}=\overline{F(M_{1}^{\prime})}$ where we use the notation from Proposition 3.1. Now, since $ImF$ is closed by assumption, $N_{1}^{\prime}=F(M_{1}^{\prime}).$ Hence $ImF=N_{0} \tilde{\oplus} N_{1}^{\prime}$ and so ${N^{\prime}}^{\prime}=ImF^{0}.$ Since $ImG,ImGF$ are closed, by the same arguments $ImG^{0}, ImGF^{0}$ exist, as $G,GF \in \widehat{{\mathcal{M}\Phi}}_{r}(l_{2}(\mathcal{A})).$ Here we use that  $FG \in \widehat{{\mathcal{M}\Phi}}_{r} (l_{2}(\mathcal{A}))  $ by Remark \ref{R08} as $F,G \in \widehat{{\mathcal{M}\Phi}}_{r}(l_{2}(\mathcal{A})).$ Now, since $\ker G$ is self-dual, being finitely generated and since $\ker G \cap ImF$ is the kernel of the projection onto $Im F^{0}$ along $ImF$ restricted to $\ker G,$ by \cite[Corollary 3.6.4]{MT} we may deduce that	$ \ker G= (\ker G \cap ImF) \oplus M^{\prime}$  
	for some closed submodule $M^{\prime}.$ Hence $l_{2}(\mathcal{A})=(\ker G \cap Im F) \tilde{ \oplus} M^{\prime} \tilde{ \oplus} \ker G^{0} ,$ so $ \ker G \cap Im F $ is complementable in $l_{2}(\mathcal{A})  .$ It follows by the similar arguments as in Lemma \ref{L10} that $\ker G \cap Im F  $ is complementable in $Im F,$ being a submodule of $Im F.$ Thus $Im F= (\ker G \cap Im F)\tilde{ \oplus}M,$ where $M$ is the intersection of $Im F$ and the complement of $ \ker G \cap Im F $ in $l_{2}(\mathcal{A}).$ Observe also that by Proposition \ref{P01}. $\ker G$  and $\ker F$ are complementable in $l_{2}(\mathcal{A}).$ Since $ Im G,ImGF$ are both complementable in $l_{2}(\mathcal{A})$ and $ ImGF \subseteq ImG, $ it follows that $ImG=ImGF \tilde{ \oplus} X$ where $X=ImG \cap ImGF^{0}$ by the similar or arguments as in Lemma \ref{L10}.  So we have $l_{2}(\mathcal{A})=(\ker G \cap ImF) \tilde{ \oplus} M^{\prime} \tilde{ \oplus} \ker G^{0}=(\ker G \cap ImF) \oplus M \tilde{ \oplus} ImF^{0}=ImGF \tilde{ \oplus} X \tilde{ \oplus} ImG^{0}=l_{2}(\mathcal{A}).$  (where $\ker G^{0}$ denotes complented, closed submodule to $\ker G$). Let $\sqcap \in B  (l_{2}(\mathcal{A})) $ denote the projection onto $\ker G^{0}  $ along $ \ker G .$ We have that $ G_{\mid_{M}} $ is an isomorphism onto $Im GF  $ and moreover $ G_{\mid_{M}} = G \sqcap_{\mid_{M}} .$ Since $G_{\mid_{\ker G^{0}}}  $ and $ G\sqcap_{\mid_{M}} $ are isomorphisms, it follows that $\sqcap_{\mid_{M}} $ is an isomorphism. Let $S=(G_{\mid_{\ker G^{0}}})^{-1}   .$ Then $ \sqcap (M)=S(ImGF) .$ As $ \ker G^{0}=S(ImGF) \tilde{ \oplus} S(X) ,$ it follows that $l_{2}(\mathcal{A}) = M \tilde{ \oplus} S(X) \tilde{ \oplus} \ker G = M \tilde{ \oplus} S(X) \tilde{ \oplus} M^{\prime} \tilde{ \oplus} (\ker G \cap Im F ) .$ But, we have also $Im F = (\ker G \cap Im F) \oplus M.$ It follows that $Im F^{0} \cong S(X) \tilde{ \oplus} M^{\prime} \cong X \oplus M^{\prime}.$
	But, from the expresion above we see that $ImGF^{0} \cong X \oplus ImG^{0} \preceq X \oplus M^{\prime} \oplus ImG^{0} \cong ImF^{0} \oplus Im G^{0}.$	
    In the case when $F,G \in \widehat{{\mathcal{M}\Phi}}_{r} (l_{2}(\mathcal{A}))   $ and $Im F, Im G, Im GF$ are closed, complementable in $l_{2}(\mathcal{A}) ,$ we may apply the same proof as above, but we only need to argue first why $\ker G \cap Im F  $ is complementable. This can be deduced in the following way: Since $ F \in \widehat{{\mathcal{M}\Phi}}_{r}(l_{2}(\mathcal{A})) ,$ and  $Im F$ is closed and complementable, by Proposition \ref{P04}  $\ker F  $ is complementable in $l_{2}(\mathcal{A})  .$ Hence $\ker F= \ker F \tilde{\oplus} W,$ where $W$ is the intersection of $\ker GF$ and the complement of $\ker F,$ which follows again by similar arguments as in Lemma \ref{L10}. We have that $ F_{\mid_{W}} $ is an isomorphism onto $ \ker G \cap Im F.$ Next, again since $GF \in \widehat{{\mathcal{M}\Phi}}_{r}(l_{2}(\mathcal{A}))    $ and $Im GF   $ is closed and complementable, we get that $l_{2}(\mathcal{A}) = \ker GF \tilde{\oplus} M  $ for some closed submodule $M.$ Hence $l_{2}(\mathcal{A}) = \ker F \tilde{\oplus} W \tilde{\oplus} M .$ On $W \tilde{\oplus} M  $ is an isomorphism onto $Im F  ,$ so $$Im F =F(W) \tilde{\oplus} F(M)= (\ker G \cap Im F) \tilde{\oplus} F(M) . $$
	Therefore 
	$$l_{2}(\mathcal{A})=(\ker G \cap Im F) \tilde{\oplus} F(M) \tilde{\oplus} Im F^{0}  $$
	where $Im F^{0}  $ denotes the complement of $Im F  .$ It follows that $\ker G \cap Im F  $ is complementable.
	In order to deduce that $\ker DF \preceq (\ker D \oplus \ker F) ,$ one can proceed in exactly the same way as in the proof of \cite[Theorem 1.2.4]{ZZRD} to obtain that $\ker DF=\ker F \tilde \oplus W$  where  $W \cong (\ker D \cap ImF).$ The rest follows.
\end{proof}
\begin{lemma} \label{L06} 
	Let $ F,D \in B^{a}( H_{\mathcal{A}} )$ and suppose that $ Im F,Im D, Im DF $ are closed. Then 
	$$Im DF^{\perp} \preceq Im F^{\perp} \oplus Im D^{\perp}  $$
	$$\ker DF \preceq \ker D \oplus \ker F  $$
\end{lemma}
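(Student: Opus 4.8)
The plan is to reduce everything to the structural decomposition supplied by Lemma \ref{L08}, applied to the pair $F,D$. That lemma furnishes closed submodules $X,W,M^{\prime}$ with $Im F = W \oplus (\ker D \cap Im F)$, $Im D = Im DF \oplus X$, $\ker D = M^{\prime} \oplus (\ker D \cap Im F)$, and $H_{\mathcal{A}} = W \tilde{\oplus} S(X) \oplus \ker D$ where $S = (D_{\mid_{\ker D^{\perp}}})^{-1}$. All the asserted orthogonal summands come from \cite[Theorem 2.3.3]{MT}, so I may treat the $\oplus$'s as orthogonal and the $\tilde{\oplus}$'s as merely topological, and keep careful track of which is which.

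For the image statement I would first compute $Im DF^{\perp}$. Since $Im DF \oplus X = Im D$ is an orthogonal splitting inside $Im D$, and $Im D$ is orthogonally complemented in $H_{\mathcal{A}}$, the orthogonal complement of $Im DF$ is $Im DF^{\perp} = X \oplus Im D^{\perp}$. Next I would pin down $Im F^{\perp}$: substituting $\ker D = M^{\prime} \oplus (\ker D \cap Im F)$ into $H_{\mathcal{A}} = W \tilde{\oplus} S(X) \oplus \ker D$ and regrouping $W$ together with $\ker D \cap Im F$ into $Im F$, I obtain the two–term decomposition $H_{\mathcal{A}} = Im F \tilde{\oplus} (S(X) \oplus M^{\prime})$. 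Restricting the orthogonal projection onto $Im F^{\perp}$ to the complement $S(X)\oplus M^{\prime}$ gives an adjointable bijection, so $Im F^{\perp} \cong S(X)\oplus M^{\prime} \cong X \oplus M^{\prime}$, the last step because $S$ is an isomorphism. Combining the two computations, $Im DF^{\perp} = X \oplus Im D^{\perp}$ is isomorphic to the closed submodule obtained by putting $0$ in the $M^{\prime}$ slot of $(X\oplus M^{\prime})\oplus Im D^{\perp} \cong Im F^{\perp}\oplus Im D^{\perp}$, which is precisely $Im DF^{\perp} \preceq Im F^{\perp}\oplus Im D^{\perp}$.

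For the kernel statement I would split $\ker DF$ directly, replacing the finite–generation/projectivity argument used in Proposition \ref{P03} by a pseudo-inverse argument valid for arbitrary adjointable $F$ with closed range. Since $Im F$ is closed, \cite[Theorem 2.3.3]{MT} makes $F_{\mid_{\ker F^{\perp}}}:\ker F^{\perp}\to Im F$ an isomorphism; letting $T$ denote its inverse I set $W_{0}=T(\ker D \cap Im F)\subseteq \ker F^{\perp}$. Then $F_{\mid_{W_{0}}}$ is an isomorphism onto $\ker D \cap Im F$, and $W_{0}\subseteq \ker DF$ because $F(W_{0})\subseteq \ker D$. Using the orthogonal splitting $H_{\mathcal{A}} = \ker F \oplus \ker F^{\perp}$ one checks $\ker DF = \ker F \oplus W_{0}$: the intersection is trivial as $W_{0}\subseteq \ker F^{\perp}$, and any $x\in\ker DF$ decomposes as $x=x_{1}+x_{2}$ with $x_{1}\in\ker F$, $x_{2}\in\ker F^{\perp}$ and $x_{2}=T(Fx)\in W_{0}$. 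Hence $\ker DF \cong \ker F \oplus (\ker D \cap Im F)$, and since $\ker D \cap Im F$ is a closed submodule of $\ker D$ (a direct summand by Lemma \ref{L08}), this yields $\ker DF \preceq \ker D \oplus \ker F$.

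The step I expect to be most delicate is the identification $Im F^{\perp}\cong X\oplus M^{\prime}$, that is, the cancellation that passes from the mixed topological sum $H_{\mathcal{A}} = Im F \tilde{\oplus}(S(X)\oplus M^{\prime})$ to the \emph{orthogonal} complement $Im F^{\perp}$; this is the same device used in the proof of Proposition \ref{P03}, and the point to be careful about is that the comparison map is a genuine Hilbert module isomorphism, which is guaranteed here because it is the restriction of the orthogonal projection onto $Im F^{\perp}$ to a complemented submodule. Once that is in place, the remainder is pure bookkeeping inside the decompositions of Lemma \ref{L08}, together with the routine verification that all the relevant direct sums are closed.
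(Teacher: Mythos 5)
Your argument is correct and follows essentially the same route as the paper: the paper's proof simply defers to the decomposition machinery of Proposition \ref{P03}, adding only the observation that $\ker D \cap Im F$ is orthogonally complementable via \cite[Theorem 2.3.3]{MT}, and your use of Lemma \ref{L08} reproduces exactly that decomposition ($Im F = W \oplus (\ker D\cap Im F)$, $Im D = Im DF\oplus X$, $H_{\mathcal{A}} = W\tilde{\oplus}S(X)\oplus\ker D$), leading to the same identifications $Im DF^{\perp}\cong X\oplus Im D^{\perp}$ and $Im F^{\perp}\cong X\oplus M^{\prime}$. Your explicit pseudo-inverse splitting $\ker DF=\ker F\oplus W_{0}$ with $W_{0}\cong\ker D\cap Im F$ is likewise the same argument the paper invokes by reference to \cite[Theorem 1.2.4]{ZZRD}, just written out in full.
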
 
\begin{proof}
	The statement can be proved in exactly the same way as in the Proposition \ref{P03} as $Im F, Im D, Im DF$ will be then orthogonally complementable in $H_{\mathcal{A}}$ by \cite[Theorem 2.3.3]{MT}. Again, we only need to argue that $\ker D \cap Im F  $ is orthogonally complementable in $H_{\mathcal{A}}  .$ Now $ D_{\mid_{Im F}} $ is an adjointable operator from $Im F$ into $H_{\mathcal{A}}   $ as $D \in B^{a} (H_{\mathcal{A}})  $ and $Im F$ is orthogonally complementable in $H_{\mathcal{A}}  .$ Moreover $Im D_{\mid_{Im F}}= Im DF,$ which is closed by assumption. From \cite[Theorem 2.3.3]{MT} it follows that $\ker D_{\mid_{Im F}}=\ker D \cap Im F  $  is orthogonally complementable in $Im F$. Thus $Im F = (\ker D \cap Im F) \oplus M  $ for some closed submodule $M.$ Hence $H_{\mathcal{A}} =(\ker D \cap Im F) \oplus M \oplus Im F^{\perp} .$
\end{proof}
\begin{lemma} \label{L09} 
	Let $F,G \in \widehat{{\mathcal{M}\Phi}} (l_{2}(\mathcal{A}))    $ and suppose  that, $Im G, Im F$ are closed. Then $Im GF$ is closed if and only if $Im F+\ker G$ is closed and complementable.
\end{lemma}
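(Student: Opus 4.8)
The plan is to reduce the biconditional to a pure complementability statement and then exploit the fact that an $\mathcal{A}$-Fredholm operator has finitely generated kernel. First I would invoke \cite[Corollary 1]{N}, exactly as in the proof of Corollary \ref{C05}: it applies to the bounded (not necessarily adjointable) $\mathcal{A}$-linear operators $F,G$ on the Banach space $l_{2}(\mathcal{A})$ and gives that $Im GF$ is closed if and only if $Im F+\ker G$ is closed. Hence the two conditions in the statement differ only by the complementability clause, and the converse implication is trivial (complementable $\Rightarrow$ closed $\Rightarrow$ $Im GF$ closed). So the whole content is to show: if $F,G\in\widehat{{\mathcal{M}\Phi}}(l_{2}(\mathcal{A}))$ have closed images and $Im F+\ker G$ is closed, then $Im F+\ker G$ is complementable.

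To build the complement I would first collect the structural consequences of $\mathcal{A}$-Fredholmness. Since $G\in\widehat{{\mathcal{M}\Phi}}(l_{2}(\mathcal{A}))$, its kernel $\ker G$ is finitely generated, hence self-dual over the $W^{*}$-algebra $\mathcal{A}$. Since $F\in\widehat{{\mathcal{M}\Phi}}(l_{2}(\mathcal{A}))\subseteq\widehat{{\mathcal{M}\Phi}}_{l}(l_{2}(\mathcal{A}))$ has closed image, Proposition \ref{P01} provides a complement, say $l_{2}(\mathcal{A})=Im F\ \tilde{\oplus}\ Im F^{0}$; let $\sqcap$ denote the bounded projection onto $Im F^{0}$ along $Im F$. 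Now $\ker G\cap Im F$ is a closed submodule of the self-dual module $\ker G$, so by \cite[Corollary 3.6.4]{MT} it splits off: $\ker G=(\ker G\cap Im F)\oplus M'$ with $M'$ finitely generated. As $M'\cap Im F\subseteq\ker G\cap Im F$ meets $M'$ trivially, one checks (as in Lemma \ref{L10}) that $Im F+\ker G=Im F+M'$, and moreover $Im F+\ker G=Im F+\sqcap(M')$, an algebraic direct sum, since $(I-\sqcap)M'\subseteq Im F$ while $\sqcap(M')\subseteq Im F^{0}$ meets $Im F$ trivially.

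The hypothesis now enters through a clean splitting argument. Because $Im F=\ker\sqcap$ is closed and complementable and $\sqcap(M')\subseteq Im F^{0}$, one has $Im F+\ker G=\sqcap^{-1}(\sqcap(M'))$ and $\sqcap(M')=(Im F+\ker G)\cap Im F^{0}$; equivalently, $\sqcap|_{M'}$ is bounded below exactly when this sum is closed. Thus the assumption that $Im F+\ker G$ is closed yields that $\sqcap(M')$ is closed. Being finitely generated (it is $\sqcap$ of the finitely generated $M'$) and closed, $\sqcap(M')$ is a finitely generated Hilbert module over a $W^{*}$-algebra, hence self-dual, hence orthogonally complementable by \cite[Corollary 3.6.4]{MT}; so $Im F^{0}=\sqcap(M')\oplus Y$ for some closed submodule $Y$. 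Combining, $l_{2}(\mathcal{A})=Im F\ \tilde{\oplus}\ \sqcap(M')\ \tilde{\oplus}\ Y=(Im F+\ker G)\ \tilde{\oplus}\ Y$, which exhibits the required complement and finishes the proof.

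I expect the main obstacle to be the careful bookkeeping of the non-orthogonal direct sums forced by the possible non-adjointability of $F$ and $G$: unlike in Lemma \ref{L11} and Corollary \ref{C05}, orthogonal projections are not available, so the equivalence between closedness of $Im F+\ker G$ and closedness of $\sqcap(M')$ must be obtained from the abstract projection $\sqcap$ (via $\sqcap^{-1}$ and intersection with $Im F^{0}$) rather than from an inner product, and the final splitting of $\sqcap(M')$ must be justified purely from its finite generation together with the $W^{*}$-self-duality results of \cite[Corollary 3.6.4]{MT}, rather than from any orthogonal structure.
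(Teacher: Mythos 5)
Your proof is correct, but it builds the complement by a genuinely different mechanism than the paper. The paper never uses the ``only if'' direction of \cite[Corollary 1]{N}: assuming $Im GF$ closed, it notes $GF\in\widehat{{\mathcal{M}\Phi}}(l_{2}(\mathcal{A}))$ by Remark \ref{R08}, gets $Im GF$ complementable from Proposition \ref{P01}, splits $Im F=(\ker G\cap Im F)\tilde{\oplus}\tilde{M}$ and $Im G=Im GF\tilde{\oplus}(Im GF^{0}\cap Im G)$, and then runs the Lemma \ref{L08} machinery with $S^{\prime}=(G_{\mid_{\ker G^{0}}})^{-1}$ to arrive at $l_{2}(\mathcal{A})=\tilde{M}\tilde{\oplus}S^{\prime}(Im GF^{0}\cap Im G)\tilde{\oplus}\ker G$, which exhibits $\tilde{M}\tilde{\oplus}\ker G=Im F+\ker G$ as closed and complemented in one stroke. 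You instead use Nikaido's equivalence in both directions to reduce the lemma to ``closed $\Rightarrow$ complementable'', push a complement $M^{\prime}$ of $\ker G\cap Im F$ inside the self-dual module $\ker G$ forward through the projection $\sqcap$ onto $Im F^{0}$, identify $\sqcap(M^{\prime})=(Im F+\ker G)\cap Im F^{0}$ as closed and finitely generated, and split it off $Im F^{0}$ by self-duality. Your route is leaner in hypotheses: it uses only that $\ker G$ is finitely generated and that $Im F$ is complementable, so it would work for $F\in\widehat{{\mathcal{M}\Phi}}_{l}(l_{2}(\mathcal{A}))$ and $G$ with finitely generated kernel, and it needs neither the complementability of $Im GF$ nor the multiplicativity of $\widehat{{\mathcal{M}\Phi}}$; the price is reliance on the full biconditional in \cite[Corollary 1]{N}, which the paper invokes only in the easy direction, and the paper's construction returns more structural information about $Im G$ and $Im GF$ as a by-product. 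Two minor citation adjustments: the orthogonal splitting of the finitely generated closed submodule $\sqcap(M^{\prime})$ is the statement of \cite[Lemma 2.3.7]{MT} (as used in Lemma \ref{L04} and Theorem \ref{T01}) rather than of \cite[Corollary 3.6.4]{MT}, and the closedness of $\sqcap(M^{\prime})$ --- which you correctly extract from the intersection identity --- should be stated before self-duality is claimed.
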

\begin{proof}
	If $Im F + \ker G$ is closed, then $Im GF$ is closed by \cite[Corollary 1]{N}. Conversely, assume that $Im GF$ is closed. Now, by Remark \ref{R08} $GF \in \widehat{{\mathcal{M}\Phi}} (l_{2}(\mathcal{A}))   $ as $G, F$ are so. Then by Proposition \ref{P01} $Im GF$ is complementable. Moreover, $\ker G \cap Im F  $ is complementable in $Im F$ by the same arguments as earlier, because $F,G \in \widehat{{\mathcal{M}\Phi}} (l_{2}(\mathcal{A}))   .$ So we may write $Im F$ as $Im F=(\ker G \cap Im F) \tilde{ \oplus} \tilde{M}.$ We have then that $G_{\mid_{\tilde{M}}}  $ is an isomorphism onto $Im GF.$ Let $Im GF^{0}  ,Im G^{0}$ denote the complements of $Im GF, Im G,$ respectively. Then, since $Im GF \subseteq Im G,$ by the proof of Lemma \ref{L10} $Im G = Im GF \tilde{ \oplus} (Im G F^{0} \cap Im G)  .$ Hence, we get $l_{2}(\mathcal{A}) = Im GF \tilde{ \oplus} (Im GF^{0} \cap Im G) \tilde{ \oplus} Im G^{0}.$ Moreover, since $G \in \widehat{{\mathcal{M}\Phi}} (l_{2}(\mathcal{A}))  $ and $Im G$ is closed by assumption, by Proposition \ref{P01} $\ker G$ is complementable in $l_{2}(\mathcal{A})  .$ If we let $\ker G^{0}  $ denote the complement of $\ker G,$ we have then that $G_{\mid_{\ker G^{0}}}  $ is an isomorphism onto $Im G.$ Combining all these facts together, we are then in the position to apply the same arguments as in the proof of Lemma \ref{L08} to obtain that $ l_{2}(\mathcal{A})= \tilde{M} \tilde{ \oplus} S^{\prime}(Om GF^{0} \cap Im G) \tilde{ \oplus} \ker G  $ where $S^{\prime}=(G_{\mid_{\ker G^{0}}})^{-1}  .$ Hence $\tilde{M} \tilde{ \oplus} \ker G  $ is closed and complementable in $l_{2}(\mathcal{A}).$ But $ \tilde{M} \tilde{ \oplus} \ker G=\tilde{M} \tilde{ \oplus} (\ker G \cap Im F) \tilde{ \oplus} \tilde{M}^{\prime}=\ker G +Im F .$\\
\end{proof}
\begin{remark} \label{R04} 
	By Sakai's theorem, since $\mathcal{A}$ is a $W^{*}$  algebra, $\mathcal{A}$ is a dual space of a certain Banach space, hence $\mathcal{A}$ can also be equipped with the $w^{*}$-topology. Consequently $\mathcal{A}^{\mathbb{N}}$ can be equipped with the product $w^{*}$-topology.  Since $H_{\mathcal{A}} \subseteq \mathcal{A}^{\mathbb{N}}, H_{\mathcal{A}}$ has a subspace topology inherited from the product $w^{*}$ -topology on $\mathcal{A}^{\mathbb{N}}.$
\end{remark}
The next lemma is motivated by the well known result \cite[Theorem 1.2.3]{ZZRD} in the classical semi-Fredholm theory on Hilbert spaces which states that if $H$ is a Hilbert space and $F \in B(H),$ then $F \in \Phi_{+}(H) $ if and only if for every bounded sequence $\lbrace x_{n} \rbrace $ in $H$ which does not have a convergent subsequence,  $\lbrace Fx_{n} \rbrace $ does not have a convergent subsequence.
\begin{lemma} \label{L02} 
	Let $F \in  B^{a}(H_{\mathcal{A}}) $ and suppose that $ImF$ is closed, let $\lbrace x_{n} \rbrace $ be a sequence in $H_{\mathcal{A}} $ s.t. $ \lbrace P_{\ker  F}x_{n} \rbrace $  is a bounded sequence in $H_{\mathcal{A}}.$ If $\lbrace x_{n} \rbrace$ does not have a convergent  subsequence in the product $w^{*}$-topology, then $\lbrace F x_{n} \rbrace $ does not have a convergent subsequence in the norm topology of $H_{\mathcal{A}}.$
\end{lemma}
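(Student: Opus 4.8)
The plan is to argue by contraposition: I will show that if $\{Fx_n\}$ has a subsequence converging in the norm topology, then $\{x_n\}$ must have a subsequence converging in the product $w^{*}$-topology. First I would exploit that $Im F$ is closed. By \cite[Theorem 2.3.3]{MT} this forces $\ker F$ to be orthogonally complementable, so $H_{\mathcal{A}}=\ker F \oplus (\ker F)^{\perp}$, and $P_{\ker F}$ is exactly the orthogonal projection occurring in the statement. The restriction $F_{0}:=F_{\mid_{(\ker F)^{\perp}}}\colon (\ker F)^{\perp}\to Im F$ is then an adjointable isomorphism onto the closed submodule $Im F$, hence bounded below and possessing a bounded inverse $F_{0}^{-1}$. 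Writing $x_{n}=u_{n}+v_{n}$ with $u_{n}=P_{\ker F}x_{n}\in\ker F$ and $v_{n}=(I-P_{\ker F})x_{n}\in(\ker F)^{\perp}$, I get $Fx_{n}=F_{0}v_{n}$.

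Next, suppose $\{Fx_{n_{k}}\}$ converges in norm to some $y$. Since $Im F$ is closed, $y\in Im F$, and applying the bounded operator $F_{0}^{-1}$ yields $v_{n_{k}}=F_{0}^{-1}(Fx_{n_{k}})\to F_{0}^{-1}(y)=:v$ in norm. In particular $\{v_{n_{k}}\}$ converges also in the product $w^{*}$-topology, norm convergence being the stronger notion. So the component of $x_{n_{k}}$ lying in $(\ker F)^{\perp}$ is already under control, and the whole difficulty is pushed onto the kernel component $\{u_{n_{k}}\}$.

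It remains to produce a $w^{*}$-convergent subsequence of $\{u_{n_{k}}\}$, and this is where the boundedness hypothesis on $\{P_{\ker F}x_{n}\}$ and the $W^{*}$-structure recorded in Remark \ref{R04} enter. By Sakai's theorem each coordinate $u_{n}^{(j)}$ lies in the $w^{*}$-compact ball of radius $R:=\sup_{n}\|u_{n}\|$ in $\mathcal{A}$; applying Banach--Alaoglu coordinatewise and diagonalizing over the countably many coordinates, I would extract a subsequence $\{u_{n_{k_{l}}}\}$ converging coordinatewise in $w^{*}$ to some family $u=(u^{(j)})_{j}$. Because the product $w^{*}$-topology is a vector topology and $v_{n_{k_{l}}}\to v$ there as well, this gives $x_{n_{k_{l}}}=u_{n_{k_{l}}}+v_{n_{k_{l}}}\to u+v$ in the product $w^{*}$-topology, contradicting the assumption that $\{x_{n}\}$ has no such convergent subsequence.

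The main obstacle is precisely this last extraction. Banach--Alaoglu with a diagonal argument delivers coordinatewise $w^{*}$-limits, but two points must be handled with care: that the limit family $u+v$ genuinely defines an element of $H_{\mathcal{A}}$ (so that convergence takes place inside $H_{\mathcal{A}}$ and not merely in the ambient $\mathcal{A}^{\mathbb{N}}$), and that the convergence is sequential rather than only net-theoretic. For the first point I would estimate the partial sums $\sum_{j\le J}(u^{(j)})^{*}u^{(j)}$ as $w^{*}$-limits of the bounded families $\sum_{j\le J}(u_{n_{k_{l}}}^{(j)})^{*}u_{n_{k_{l}}}^{(j)}\le R^{2}\cdot 1$, using that the order relation is preserved under $w^{*}$-limits, so that these partial sums form a bounded increasing net; this is exactly the step where the orthogonal complementability of $\ker F$ and the $W^{*}$-algebra hypothesis (via the self-duality properties of complemented submodules over $W^{*}$-algebras from \cite{MT}) must be invoked to keep the limit inside $H_{\mathcal{A}}$.
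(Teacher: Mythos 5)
Your proposal follows essentially the same route as the paper's proof: contraposition, the orthogonal decomposition $H_{\mathcal{A}}=\ker F\oplus\ker F^{\perp}$ from \cite[Theorem 2.3.3]{MT}, norm convergence of $v_{n_k}=(F_{\mid_{\ker F^{\perp}}})^{-1}Fx_{n_k}$, and an Alaoglu-type compactness argument to extract a $w^{*}$-convergent subsequence of the bounded kernel components $u_{n_k}$, yielding the same contradiction. The only cosmetic difference is that the paper applies Alaoglu plus Tychonoff to the product $(\overline{B}_{N}^{*}(0))^{\mathbb{N}}$ at once rather than coordinatewise with a diagonal extraction, and it does not dwell on the two refinements you raise at the end (membership of the limit in $H_{\mathcal{A}}$ and sequential versus net convergence), so your argument is, if anything, more careful on exactly the points the paper leaves implicit.
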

\begin{proof}
	Notice first that since $ImF$ is closed , then $\ker F$ is an orthogonal direct summnand in $H_{\mathcal{A}}$ by  \cite[Theorem 2.3.3]{MT} $F_{{\mid}_{\ker F^{\bot}}}$ is an isomorphism from $\ker F^{\bot}$ onto $ImF.$ Also, in the  statement of the theorem $P_{\ker F}$ denotes the orthogonal projection onto $\ker F$ along $\ker F^{\bot}.$ Therefore, $F_{{\mid}_{\ker F^{\bot}}}$ has a bounded inverse from $ImF$ onto $\ker  F^{\bot}.$ Now, since $H_{\mathcal{A}} = \ker  F \tilde \oplus \ker  F^{\bot}, x_{n}$ can be written as 
	$$x_{n}=u_{n}+v_{n},u_{n} \in \ker  F , v_{n} \in \ker  F^{\bot} \forall n .$$ 
	Suppose that $\lbrace Fx_{n} \rbrace$ has a convergent subsequence  $\lbrace Fx_{n_{k}} \rbrace _{k}.$ Then $$(F_{{\mid}_{(\ker F)^{\bot}}})^{-1} Fx_{n_{k}}=v_{n_{k}}$$ is a convergent subsequence, hence it is convergent in the product $w^{*}$- topology (since the sequence \\
	$\lbrace v_{{n}_{k}} \rbrace_{k} $ coordinatevise is convergent in the norm of $\mathcal{A},$ hence in the $w^{*}$- topology of $\mathcal{A}$). By assumption of the theorem, $\lbrace P_{\ker F}x_{n_{k}} \rbrace $  is bounded, hence since $P_{\ker F}x_{n_{k}}=u_{n_{k}}$ we get that $\lbrace u_{n_{k}} \rbrace \subseteq  (  \overline{B}_{N}^{*}(0))^{\mathbb{N}} $ where $\overline{B}_{N}^{*}(0)$ is the closed ball with center in 0 and radius $N$ in $\mathcal{A}$ and $N$ is chosen such that $\parallel u_{n_{k}} \parallel \leq N,$ for all $k.$ By Alaoglu theorem, $  \overline{B}_{N}^{*}(0) $ compact, hence by Tychonoff theorem, $ ( \overline{B}_{N}^{*}(0))^{\mathbb{N}} $ is compact in the product $w^{*}$-topology.\\
	Therefore, $\lbrace u_{n_{k}} \rbrace$ has a convergent subsequence in the product $w^{*}$-topology , say $\lbrace u_{n_{k_{j}}} \rbrace.$ Hence $x_{n_{k_{j}}}=u_{n_{k_{j}}}+v_{n_{k_{j}}}$ is a convergent subseqence in the product $w^{*}$ topology, which is not possible.
\end{proof}
\begin{remark} \label{R05} 
	Observe that in Lemma \ref{L02} we do not assume that $F \in  {\mathcal{M}\Phi_{+}}(H_{\mathcal{A}})  ,$ but only that  $Im F$ is closed. However, we have only implication in this lemma and not the equivalence. The key argument in proving \cite[Theorem 1.2.3]{ZZRD} is that the unit ball in the finite dimensional space is compact. In our generalized situation we do not have this tool at disposition, however we have Alaoglu's theorem as a counterpart.   
\end{remark}
\begin{remark} \label{R07} 
	In Lemma \ref{L02}, if $F$ was not adjointable, then we would need to assume f. ex. that $F \in  \widehat{{\mathcal{M}\Phi}}_{l} (l_{2}(\mathcal{A})),$ because in the nonadjointable case we do not have in general that $ImF$ would be complmentable if it is closed. However, if $F \in  \widehat{{\mathcal{M}\Phi}}_{l}(l_{2}(\mathcal{A}))$ and $Im F$ is closed, then, by Proposition \ref{P01} we have that $ImF$ is complementable.
\end{remark}
\begin{lemma} \label{L03} 
	Let $ F \in  {\mathcal{M}\Phi}  (M)$ s.t. $ImF$ is closed, where $M$ is a Hilbert $W^{*}$-module. Then there exists an $ \epsilon >0$ such that for every $ D \in B^{a}(M)$ with $ \parallel D \parallel < \epsilon$ we have 
	$$ \ker (F+D) \preceq \ker  F \textrm{ , } Im(F+D)^{\bot}\preceq ImF^{\bot}.$$ 
\end{lemma}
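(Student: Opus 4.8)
The plan is to use the closed range of \(F\) to put \(F\) into block form with an invertible corner, perturb, and diagonalize; the two embeddings then drop out, the second one by passing to adjoints. First I would record the decomposition coming from the closed range: since \(Im F\) is closed and \(F\) is adjointable, \cite[Theorem 2.3.3]{MT} gives that \(\ker F\) and \(Im F\) are orthogonally complementable and that \(F_{1}:=F_{\mid_{\ker F^{\perp}}}\) is an isomorphism of \(\ker F^{\perp}\) onto \(Im F\). Writing the domain as \(\ker F^{\perp}\oplus\ker F\) and the codomain as \(Im F\oplus Im F^{\perp}\), the operator \(F\) then has the matrix \(\left[\begin{smallmatrix}F_{1}&0\\0&0\end{smallmatrix}\right]\) with \(F_{1}\) invertible, and I would set \(\epsilon=\|F_{1}^{-1}\|^{-1}\).

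Next, for \(D\in B^{a}(M)\) with \(\|D\|<\epsilon\) I would write \(D=\left[\begin{smallmatrix}D_{1}&D_{2}\\D_{3}&D_{4}\end{smallmatrix}\right]\) with respect to the same decompositions (each \(D_{i}\) is adjointable, since the orthogonal projections and inclusions are). As \(\|D_{1}\|\le\|D\|<\|F_{1}^{-1}\|^{-1}\), a Neumann series argument shows \(F_{1}+D_{1}\) is still an isomorphism of \(\ker F^{\perp}\) onto \(Im F\). Multiplying \(F+D\) on the left by \(\left[\begin{smallmatrix}I&0\\-D_{3}(F_{1}+D_{1})^{-1}&I\end{smallmatrix}\right]\) and on the right by \(\left[\begin{smallmatrix}I&-(F_{1}+D_{1})^{-1}D_{2}\\0&I\end{smallmatrix}\right]\) — both adjointable and invertible, hence module isomorphisms — diagonalizes it to \(\left[\begin{smallmatrix}F_{1}+D_{1}&0\\0&\widetilde{D}_{4}\end{smallmatrix}\right]\), where \(\widetilde{D}_{4}=D_{4}-D_{3}(F_{1}+D_{1})^{-1}D_{2}\colon\ker F\to Im F^{\perp}\) is adjointable.

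Because the two triangular factors are isomorphisms, \(\ker(F+D)\) is carried isomorphically onto the kernel of the diagonal operator, which equals \(\{0\}\oplus\ker\widetilde{D}_{4}\) since \(F_{1}+D_{1}\) is injective. As \(\widetilde{D}_{4}\) sends \(\ker F\) into \(Im F^{\perp}\), its kernel is a closed submodule of \(\ker F\), whence \(\ker(F+D)\cong\ker\widetilde{D}_{4}\preceq\ker F\). For the second embedding I would pass to adjoints: for any adjointable \(T\) one has \((Im T)^{\perp}=\ker T^{*}\), so \(Im(F+D)^{\perp}=\ker(F^{*}+D^{*})\) and \(Im F^{\perp}=\ker F^{*}\). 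Since \(F^{*}\) is adjointable with closed range and \(\ker F^{*}=Im F^{\perp}\), and since \(\|D^{*}\|=\|D\|\) while \(\|(F_{1}^{*})^{-1}\|=\|F_{1}^{-1}\|\) (so the same \(\epsilon\) works), applying the kernel argument to \(F^{*}\) in place of \(F\) yields \(\ker(F^{*}+D^{*})\preceq\ker F^{*}\), i.e. \(Im(F+D)^{\perp}\preceq Im F^{\perp}\).

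The hard part will be bookkeeping rather than a conceptual obstacle: one must verify that the diagonalizing factors are genuine adjointable module isomorphisms (so that \(\preceq\), which asks for a module isomorphism onto a closed submodule, really applies), check that a single \(\epsilon\) serves both \(F\) and \(F^{*}\), and observe that the \(\mathcal{A}\)-Fredholm hypothesis beyond the closedness of \(Im F\) is not actually needed for the two embeddings — it only guarantees that the compared modules \(\ker F\) and \(Im F^{\perp}\) are finitely generated, which is what makes the statement meaningful in the Fredholm context.
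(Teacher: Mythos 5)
Your proof is correct. For the first embedding it follows the paper's route exactly: the block decomposition $M=\ker F^{\perp}\oplus\ker F\rightarrow Im F\oplus Im F^{\perp}$ from \cite[Theorem 2.3.3]{MT} and the diagonalization of \cite[Lemma 2.7.10]{MT} (you simply write out the triangular factors that that lemma's proof produces), giving $\ker(F+D)\cong\ker\widetilde{D}_{4}\preceq\ker F$. For the second embedding you genuinely diverge. The paper stays with $\tilde{D}=F+D$ itself: it notes that $U_{2}^{-1}(Im F)\subseteq Im\tilde{D}$, so the projection onto $U_{2}^{-1}(Im F^{\perp})$ is injective on $Im\tilde{D}^{\perp}$, and then uses that $Im\tilde{D}^{\perp}$ is finitely generated (here the $\mathcal{A}$-Fredholm hypothesis enters), hence self-dual, so that \cite[Corollary 3.6.7]{MT} --- a $W^{*}$-specific tool --- upgrades this injection to an isomorphism onto a direct summand of $Im F^{\perp}$. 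You instead pass to adjoints, using $\ker T^{*}=(Im T)^{\perp}$ and the fact that $F^{*}$ again has closed range with $(\ker F^{*})^{\perp}=Im F$ and the same norm bound for the inverse of the invertible corner, and rerun the kernel argument. Your route buys something real: it avoids \cite[Corollary 3.6.7]{MT} entirely, so it needs neither the $W^{*}$-algebra assumption nor Fredholmness --- the two embeddings hold for any adjointable closed-range operator over any $C^{*}$-algebra. What the paper's self-duality argument buys in exchange is robustness in the non-adjointable setting: Remark \ref{R10} extends the lemma to $F\in\widehat{\mathcal{M}\Phi}(M)$, where there is no adjoint to pass to, and there the paper's argument is the one that survives. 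Your closing observation that the Fredholm hypothesis is only needed to make the compared modules finitely generated is accurate for the statement as such, though not for the paper's own proof of the second embedding.
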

\begin{proof}
	Since $F \in  {\mathcal{M}\Phi} (M)$ has closed image, w.r.t the decomposition
	$$M=\ker F^{\bot} \tilde \oplus \ker F‎\stackrel{F}{\longrightarrow}  ImF \tilde \oplus ImF^{\bot}=M$$
	has the matrix
	\begin{center}
		$\left\lbrack
		\begin{array}{ll}
		F_{1} & 0 \\
		0 & 0 \\
		\end{array}
		\right \rbrack
		$
	\end{center}
	where $F_{1}$ is an isomorphism by \cite[Theorem 2.3.3]{MT} . By the proof of \cite[Lemma 2.7.10]{MT}, there exists an $\epsilon>0$ such that if $\parallel F- \tilde D \parallel < \epsilon$ for some  $ \tilde D \in B^{a}(M),$ then $ \tilde D$ has the matrix
	\begin{center}
		$\left\lbrack
		\begin{array}{ll}
		\tilde D_{1} & 0 \\
		0 & \tilde D_{4}  \\
		\end{array}
		\right \rbrack
		$
	\end{center}
	w.r.t. the decomposition
	$$M=U_{1}(\ker F^{\bot}) \tilde \oplus U_{1}(\ker F)_‎\stackrel{\tilde{D}}{\longrightarrow}  U_{2}^{-1}(ImF) \tilde \oplus U_{2}^{-1}(ImF^{\bot})=M$$
	where $U_{1},U_{2} $ and $\tilde D_{1}$ are isomorphisms. It follows then that $$\ker \tilde D \subseteq  U_{1}(\ker F)\cong \ker F.$$ Set $D=\tilde{D}-F,$ then $\tilde{D}=F+D.$ Hence $\ker (F+D)  \preceq \ker  F.$ Observe now that $U_{2}^{-1}(ImF)\subseteq Im \tilde{D}.$ Hence $$Im \tilde{D}^{\bot}\cap U_{2}^{-1}(ImF)= \lbrace 0 \rbrace,$$ so ${P_{U_{2}^{-1}(ImF^{\bot})}}_{{\mid}_{Im \tilde{D}^{\bot}}}$ is injective, where $P_{U_{2}^{-1}(ImF^{\bot})} $ denotes the projection onto \\
	$U_{2}^{-1}(ImF^{\bot})$ along $U_{2}^{-1}(ImF).$ Since $\tilde{D} \in  {\mathcal{M}\Phi} (M), Im \tilde{D}^{\bot}$  is finitely generated, hence self-dual. By \cite[Corollary 3.6.7]{MT}, it follows then that $Im \tilde{D}^{\bot}$ is isomorphic to a direct summand in $U_{2}^{-1}(ImF^{\bot}).$ Since  $U_{2}^{-1}(ImF^{\bot}) \cong ImF^{\bot}, $ it follows that $Im \tilde{D}^{\bot} \preccurlyeq ImF^{\bot}.$ 
\end{proof}
\begin{remark} \label{R10}
	Lemma \ref{L03} are also valid in the case when $F \in \widehat{ {\mathcal{M}\Phi}} (  M  )   $ with closed image because in this case, by Proposition \ref{P01}, there exists a decomposition 
	$$  M = \ker F^{0} \tilde{ \oplus} \ker F \stackrel{F}{\longrightarrow} Im F \tilde{ \oplus} Im F^{0} =M$$
	and $F_{\mid_{\ker F^{0}}}  $ is an isomorphism onto $Im F.$ By following the proof of Lemma \ref{L03} we obtain that $\ker(F+G) \preceq \ker F  $ and $Im (F+G)^{\perp} \preceq Im F^{0}  $ when $ G \in B(l_{2}(\mathcal{A})) $ is s.t. $\parallel G \parallel $ is sufficiently small. If $\overline{Im (F+G)}  $ is complementable, then $Im (F+G)^{0} \preceq Im F^{0}   $ (where $Im (F+G)^{0}   $ denotes the complement of $ \overline{Im (F+G)}  .)$
\end{remark} 
\begin{definition} \label{D02} 
	Let $M$ be a countably generated Hilbert $W^{*}$- module. 
	For\\
	  $F \in  {\mathcal{M}\Phi} (M),$ we say that F satisfies the condition (*)  if the following holds:\\
	1) $ImF^{n}$ is closed for all $n$\\
	2) $F(\bigcap\limits_{n=1}^{\infty} Im(F^{n}))=\bigcap\limits_{n=1}^{\infty} Im(F^{n})$
\end{definition}
	If we have a sequence of decreasing complementable submodules $N_k^{\prime}s,$ then their intersection in general (for $C^{*}$-algebras) is not complementable, but it is complementable for $W^{*}$-algebras. This is true due to the possibility to define a $w^{*}$-(or weak) direct sum of submodules, as opposed to the standard $l_2$ construction. Let $N_{k-1}=N_k\oplus L_k$. Then we can define $w^{*}-\oplus_k L_k$ as the set of sequences $(x_k)$, $x_k\in L_k$, such that the sum $\sum\limits_{k=1}^{\infty}\langle x_k,x_k\rangle$ is convegent in $A$ with respect to the *-strong topology, as opposed to the norm topology. Then it is easy to see that $N_0=\bigcap\limits_{k=1}^\infty N_k\oplus (w^*-\oplus_k L_k).$\\
	Note that if $M$ is an ordinary Hilbert space, then (*) is always satisfied  for any\\
	$ F \in  {\mathcal{M}\Phi} (M)$ by \cite[Theorem 1.1.9]{ZZRD}. There are also other examples of Hilbert $W^{*}$-modules where the condition (*) is automatically satisfied for an $\mathcal{A}$-Fredholm operator $F$ as long as $F$ has closed image.
\begin{example} \label{E01} 
	Let $\mathcal{A}$ be a commutative von Neumann  algebra with a cyclic vector, that is $\mathcal{A} \cong L^{\infty}(X,\mu)$ where $X$ is a compact topological space and $u$ is a Borel probability measure and consider $\mathcal{A}$ as a Hilbert module over itself. If $F$ is an $\mathcal{A}$-linear operator on $\mathcal{A},$ it is easily seen that $Im(F^{k})=Span_{\mathcal{A}} \lbrace (F(1))^{k} \rbrace $ for all $k.$
	Let $S=(F(1)^{-1}(\lbrace 0 \rbrace))^{c}.$ Then one can show that $ImF=ImF^{k}=Span_{\mathcal{A}}\lbrace \chi_{S} \rbrace$  for all $k$ if we assume that $F(1)$ is bounded away from 0 on $S,$ hence invertible on $S.$ But if $F$ is an $\mathcal{A}$-Fredholm  with closed image, then this is the case. Indeed, 
	$$\ker  F=   \lbrace f \in \mathcal{A} \mid f_{{\mid}_{S}}=0  \textrm{  }\mu\textrm{  } a.e.\textrm{  } on\textrm{  } S   \rbrace= Span_{\mathcal{A}}\lbrace \chi_{S^{c}} \rbrace,  \textrm{ so } \ker F^{\bot}=Span_{\mathcal{A}}\lbrace \chi_{S} \rbrace.$$
	Since $F$ is then bounded below on $\ker  F^{\bot},$ we have
	$$\parallel F(f) \parallel_{\infty}= \parallel  fF(1) \parallel_{\infty}\geq C \parallel f  \parallel_{\infty} $$ 
	for all $f$ being $0 $ $\mu$-almost everywhere on $S^{c}$ and for some constant $C>0.$ But, if  $$\mu   ((F(1)^{-1}(B(0, \frac{1}{n})))\cap S)>0 \textrm{  } \forall n,$$
	then letting 
	$$f_{n}=\chi_ {((F(1)^{-1}(B(0, \frac{1}{n})))\cap S)},$$ 
	we get $\parallel f_{n}  \parallel_\infty=1 \textrm{ , for all } n $ and 
	$$ F(f_{n})=f_{n}F(1)=\chi_ {((F(1)^{-1}(B(0, \frac{1}{n})))\cap S)}F(1).  $$
	It follows that $F$ will not be hounded below on $(\ker F)^{\bot}$ which is a contradiction. Observe now that
	$$Im(F)=Im(F^{k})=Span_{\mathcal{A}}\lbrace \chi_{s} \rbrace=(\ker F)^{\bot}  \forall k,$$
	so $F(Im^{\infty}(F))=Im^{\infty}(F) $ 	where $Im^{\infty}(F) $ denotes $\bigcap\limits_{k=1}^\infty Im(F^{k}).$\\
	Recall that for a $W^{*}$-algebra $\mathcal{A},$  $G(\mathcal{A})$ denotes the set of all invertible elements in $\mathcal{A}$ and $Z(\mathcal{A})=\lbrace \beta \in \mathcal{A} \mid \beta \alpha = \alpha \beta $ for all $\alpha \in \mathcal{A} \rbrace .$ We have then the following theorem.
\end{example}
\begin{theorem} \label{T02} 
	Let $F \in  {\mathcal{M}\Phi}  (\tilde M)$  where $ \tilde M$ is countably generated  Hilbert $\mathcal{A}$-module and suppose that $F$ satisfies \textrm{(*)}. Then there exists an $ \epsilon > 0$ s.t. if $\alpha \in Z(\mathcal{A}) \cap G(\mathcal{A})$ and $\parallel \alpha \parallel < \epsilon ,$ then $[\ker (F-\alpha I)]+[N_{1}]=[\ker F]$ and\\
	$[Im(F-\alpha I)^{\bot}]+[N_{1}]=[Im (F)^{\bot}] $  for some fixed, finitely generated closed submodule $N_{1}.$
\end{theorem}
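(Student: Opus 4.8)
The plan is to isolate the ``stable range'' $R:=\bigcap_{n=1}^{\infty} Im F^{n}$ and to analyse $F-\alpha I$ on $R$. First I would note that condition \textrm{(*)} 1) guarantees that each $Im F^{n}$ is closed; since $F^{n}\in B^{a}(\tilde M)$, \cite[Theorem 2.3.3]{MT} shows that each $Im F^{n}$ is orthogonally complementable. As the submodules $Im F^{n}$ are decreasing, I would invoke the $w^{*}$-direct sum construction described after Definition \ref{D02} (which is exactly where the $W^{*}$-assumption enters, cf. Remark \ref{R04}) to conclude that the intersection $R$ is again orthogonally complementable; let $P_{R}$ be the orthogonal projection onto $R$. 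Condition \textrm{(*)} 2) says precisely that $\hat F:=F_{\mid_{R}}=P_{R}F_{\mid_{R}}\colon R\to R$ is surjective. Because $R$ is $F$-invariant and orthogonally complementable, $\hat F$ is adjointable; being surjective it has closed range $R$, so by \cite[Theorem 2.3.3]{MT} its kernel $K:=R\cap\ker F$ is orthogonally complementable in $R$, and $\hat F$ restricts to an isomorphism of $K^{\perp}$ onto $R$ with bounded inverse $G\colon R\to K^{\perp}\subseteq R$, i.e. $\hat F G=I_{R}$.

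Next I would compute $\ker (F-\alpha I)$ for $\alpha\in Z(\mathcal{A})\cap G(\mathcal{A})$ with $\parallel\alpha\parallel$ small. If $(F-\alpha I)x=0$, then $Fx=x\alpha$, and since $\alpha$ is central and invertible the $\mathcal{A}$-linearity of $F$ gives $x=F(x\alpha^{-1})\in Im F$, and inductively $x=F^{n}(x\alpha^{-n})\in Im F^{n}$ for every $n$; hence $x\in R$. Therefore $\ker (F-\alpha I)=\ker(\hat F-\alpha I)$. Using the right inverse $G$ and centrality of $\alpha$, one factors $\hat F-\alpha I=\hat F(I_{R}-\alpha G)$, and for $\parallel\alpha\parallel<\parallel G\parallel^{-1}$ the factor $I_{R}-\alpha G$ is invertible by a Neumann series. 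Consequently $\ker(\hat F-\alpha I)=(I_{R}-\alpha G)^{-1}(K)$, an isomorphic copy of $K$, so that $[\ker (F-\alpha I)]=[K]$ for all such $\alpha$, in particular independent of $\alpha$.

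To assemble the two identities, recall that $F\in {\mathcal{M}\Phi}(\tilde M)$ forces $\ker F$ to be finitely generated, hence self-dual over the $W^{*}$-algebra $\mathcal{A}$; thus by \cite[Corollary 3.6.4]{MT} the submodule $K$ is orthogonally complementable in $\ker F$, say $\ker F=K\oplus N_{1}$ with $N_{1}$ a fixed finitely generated closed submodule. Then $[\ker F]=[K]+[N_{1}]=[\ker (F-\alpha I)]+[N_{1}]$, which is the first assertion. For the second, I would shrink $\epsilon$ so that $\parallel\alpha I\parallel<\epsilon$ also lies in the neighbourhood provided by Lemma \ref{L03}; then $F-\alpha I\in {\mathcal{M}\Phi}(\tilde M)$ has closed range, and by the openness of the semi-$\mathcal{A}$-Fredholm classes together with local constancy of the index \cite{I} we get $\mathrm{index}(F-\alpha I)=\mathrm{index}(F)$ in $K(\mathcal{A})$, i.e. $[\ker (F-\alpha I)]-[Im(F-\alpha I)^{\bot}]=[\ker F]-[Im F^{\bot}]$. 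Combining this with the first identity yields $[Im(F-\alpha I)^{\bot}]+[N_{1}]=[Im F^{\bot}]$, completing the argument with the \emph{same} $N_{1}$.

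The step I expect to be the main obstacle is the construction and orthogonal complementability of the stable range $R=\bigcap_{n}Im F^{n}$: this is exactly the point where one cannot argue as over an arbitrary $C^{*}$-algebra and must use that $\mathcal{A}$ is a $W^{*}$-algebra, namely that a decreasing intersection of orthogonally complementable (equivalently $w^{*}$-closed) submodules is again orthogonally complementable via the $w^{*}$-direct sum. A secondary point needing care is verifying that $\hat F=F_{\mid_{R}}$ is genuinely adjointable so that \cite[Theorem 2.3.3]{MT} applies to split $R=K\oplus K^{\perp}$; this follows from the $F$-invariance of $R$ combined with its orthogonal complementability. Once the $W^{*}$-specific structure of $R$ is in place, the remaining steps (the inclusion $\ker (F-\alpha I)\subseteq R$ via $x=F^{n}(x\alpha^{-n})$, the Neumann-series factorisation, and the index bookkeeping) are routine.
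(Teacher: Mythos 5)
Your proposal is correct, and for the central step it takes a genuinely different route from the paper. Both arguments share the same skeleton: the inclusion $\ker (F-\alpha I)\subseteq Im^{\infty}(F)=R$ for central invertible $\alpha$, the splitting $\ker F=(\ker F\cap R)\oplus N_{1}$ obtained from self-duality of the finitely generated module $\ker F$ and \cite[Corollary 3.6.4]{MT}, and the final bookkeeping via $\mathrm{index}(F-\alpha I)=\mathrm{index}\,F$ to convert the kernel identity into the cokernel identity. Where you diverge is in computing $[\ker (F-\alpha I)]$: the paper applies the perturbation Lemma \ref{L03} a \emph{second} time, now to the surjective restriction $F_{0}=F_{\mid_{R}}\in\mathcal{M}\Phi(R)$, deduces $Im(F_{0}-\alpha I)^{\bot}=0$ for small $\alpha$, and reads off $[\ker (F-\alpha I)]=\mathrm{index}(F_{0}-\alpha I)=[\ker F_{0}]$; you instead build a bounded right inverse $G$ of $F_{\mid_{R}}$ from \cite[Theorem 2.3.3]{MT}, factor $F_{\mid_{R}}-\alpha I=F_{\mid_{R}}(I_{R}-\alpha G)$, and invert $I_{R}-\alpha G$ by a Neumann series. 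Your version buys an explicit module isomorphism $\ker (F-\alpha I)\cong\ker F\cap R$ (rather than only an equality of $K(\mathcal{A})$-classes extracted from an index computation), an explicit admissible radius $\parallel\alpha\parallel<\parallel G\parallel^{-1}$, and it sidesteps the question of whether $Im(F_{0}-\alpha I)$ is closed, which the identity $[\ker(F_{0}-\alpha I)]=\mathrm{index}(F_{0}-\alpha I)$ implicitly relies on; the paper's version stays entirely inside the already developed perturbation machinery and so requires no new construction. Both proofs lean on the same unproved-in-detail point, namely the orthogonal complementability of $R=\bigcap_{n}Im F^{n}$ via the $w^{*}$-direct sum discussion following Definition \ref{D02}, so you have not introduced any gap the paper does not already have.
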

\begin{proof}
	Since $F \in  {\mathcal{M}\Phi} (\tilde M)$  has closed image, then by  Lemma \ref{L03}, there exists an $ \epsilon_{1}>0 $  such that  if $ \parallel \alpha  \parallel < \epsilon_{1} , \alpha \in Z(\mathcal{A}) \cap G(\mathcal{A}),$ then 
	$$\ker (F-\alpha I) \preceq \ker F, Im(F-\alpha I)^{\bot} \preceq ImF^{\bot} $$ 
	and by the proof of \cite[Lemma 2.7.10]{MT} $\textrm{index} (F-\alpha I)=\textrm{index}F.$ Now, by the same arguments as in the proof of \cite[Theorem 1.7.7]{ZZRD}, since $\alpha \in G(\mathcal{A}) \cap Z(\mathcal{A}) ,$ we have
	$$ \ker (F-\alpha I) \subseteq  Im^{\infty}(F):=\bigcap\limits_{n=1}^\infty Im(F^{n})  .$$ 
	Since $Im^{\infty} (F)$ is orthogonally complementable in  $\tilde{M},$ there exists orthogonal projection $P_{Im^{\infty}(F)^{\bot}}$ onto $Im^{\infty}(F)^{\bot}$ along  $Im^{\infty}(F)$ and $$(\ker F \cap Im^{\infty}(F))= \ker P_{Im^{\infty} (F)^{\bot}{{\mid}_{\ker F}}} .$$ 
	Since $\ker  F$ is self  dual being finitely generated, then by	\cite[Corollary 3.6.4]{MT},\\
	$\ker F\cap Im^{\infty} (F)$ is an orthogonal direct summand in $\ker F,$ so $$\ker F=(\ker F \cap Im^{\infty} (F)) \oplus N_{1}$$ 	for some closed submodule $N_{1}.$ Therefore $\ker F_{0}=\ker F \cap M$ is finitely generated being a direct summand in $\ker  F$ which  is finitely generated itself. Since $\ker F \cap M$ is finitely generated, by \cite[Lemma 2.3.7]{MT}, $\ker F \cap M$ is orthogonally complementable in $M,$ so $M=(\ker F \cap M ) \oplus M^{\prime}$ for some closed submodule $M^{\prime}.$
	On $M^{\prime},F_{0}$ is an isomorphism from $M^{\prime}$ onto $M,$ so $F_{0} \in  {\mathcal{M}\Phi} (M)$  (recall that $M=(\ker F \cap M) \oplus M^{\prime}),$ and $\ker F_{0}=\ker F \cap M,$ which is finitely generated).	
	By Lemma \ref{L03} , there exists  in $\epsilon_{2} >0$   such that  if $ \parallel \alpha \parallel < \epsilon_{2}\textrm{ , } \alpha \in G(\mathcal{A}) \cap Z(\mathcal{A}),$ then 
	$$\ker (F_{0}-\alpha I_{\mid M})\preceq \ker  F_{0}, Im(F_{0}-\alpha I_{\mid M})^{\bot}\preceq ImF_{0}^{\bot}$$ in $M$  and $$\textrm{index}(F_{0}-\alpha I)=\textrm{index} F_{0}=[\ker F_{0}]$$ since  $F_{0}$ is surjective. Since  $ImF_{0}^{\bot}= \lbrace 0 \rbrace $ (in $M$) as $F_{0}$ is surjective,
	$$Im(F_{0}-\alpha I)^{\bot}\textrm =0 \mbox{ for all } \parallel \alpha \parallel < \epsilon_{2},  \textrm{  } ,\alpha \in G(\mathcal{A}) \cap Z(\mathcal{A}),$$ 
	since $Im(F_{0}-\alpha I_{\mid_{M}})^{\bot} \preceq ImF_{0} ^{\bot}$ for all $\parallel \alpha \parallel <    \epsilon_{2}, \alpha \in G(\mathcal{A}) \cap Z(\mathcal{A}) .$\\
	Recall that $\ker (F-\alpha I) \subseteq Im^{\infty} (F)=M  .$ Therefore 
	$$[\ker (F-\alpha I)]=[\ker  (F_{0}-\alpha I_{\mid_{M}})]=\textrm{index}(F_{0}-\alpha I_{\mid_{M}})=\textrm{index} F_{0}=[\ker F_{0}]$$
	This holds whenever $\parallel \alpha \parallel < \epsilon_{2},  \textrm{  } ,\alpha \in G(\mathcal{A}) \cap Z(\mathcal{A}).$\\
	Now, $\ker F_{0}=\ker F \cap M$  and $\ker F=(\ker F \cap M ) \oplus N_{1}$. Therefore, if $\alpha \in G(\mathcal{A}) \cap Z(\mathcal{A})$ and $\parallel \alpha \parallel < \epsilon_{2},$ then
	$$[\ker F ]=[\ker F \cap M ]+[N_{1}]=[\ker F_{0}]+[N_{1}]= [ \ker (F-\alpha I)]+[N_{1}]$$
	whenever $ \parallel \alpha \parallel < \epsilon_{2}\textrm{ , } \alpha \in G(\mathcal{A}) \cap Z(\mathcal{A}).$ If, in addition $ \parallel \alpha \parallel < \epsilon_{1},$ then as we have seen in the beginning of this proof, by choice of $\epsilon_{1},$ we have $\textrm{index}(F-\alpha I)=\textrm{index}F.$\\
	So, if $ \parallel \alpha \parallel <min \lbrace \epsilon_{1} , \epsilon_{2} \rbrace$ for  $ \alpha \in G(\mathcal{A}) \cap Z(\mathcal{A}) ,$ then $\textrm{index}(F-\alpha I)=\textrm{index}F,$
	and $[\ker F]=[\ker (F-\alpha I)]+[N_{1}] .$ It follows that
	 $$[ImF^{\bot}]=[Im(F-\alpha I)^{\bot}]+[N_{1}]$$
\end{proof}
\begin{remark} \label{R06} 
	If $\mathcal{A}$ is a factor, then Theorem \ref{T02}  is of interest in the case of finite factors, as $K(\mathcal{A})$ is trivial otherwise.	
\end{remark}

\textbf{Acknowledgement} I am especially grateful to my supervisor Professor Vladimir M. Manuilov  for careful reading of my paper and for  inspiring comments and suggestions that led to the improved presentation of the paper.
Also I am grateful to Professor Dragan S. Djordjevic for suggesting the research topic of the paper and for introducing to me the relevant reference books.

\end{document}